\theoremstyle{plain}
\newtheorem*{lemma*}{Lemma}
\newtheorem{lemma}{Lemma}
\newtheorem*{theorem*}{Theorem}
\newtheorem{theorem}{Theorem}
\newtheorem*{proposition*}{Proposition}
\newtheorem{proposition}{Proposition}
\newtheorem*{corollary*}{Corollary}
\newtheorem{corollary}{Corollary}
\newtheorem*{claim*}{Claim}
\newtheorem{claim}{Claim}
\theoremstyle{definition}
\newtheorem*{assumption*}{Assumption}
\newtheorem*{definition*}{Definition}
\newtheorem*{convention*}{Convention}
\newtheorem*{example*}{Example}
\newtheorem{example}{Example}
\newtheorem*{algorithm*}{Algorithm}
\newtheorem*{remark*}{Remark}
\newtheorem{remark}{Remark}
\numberwithin{equation}{section}
\def\al{\alpha}
\def\be{\beta}
\def\de{\delta}
\def\et{\eta}
\def\th{\theta}
\def\si{\sigma}
\def\ta{\tau}
\def\ph{\phi}
\def\vh{\varphi}
\def\ps{\psi}
\def\om{\omega}
\def\De{\Delta}
\def\Si{\Sigma}
\def\Om{\Omega}
\def\C{\mathbb{C}}
\def\N{\mathbb{N}}
\def\R{\mathbb{R}}
\def\Z{\mathbb{Z}}
\def\cC{\mathcal{C}}
\def\cD{\mathcal{D}}
\def\cF{\mathcal{F}}
\def\cO{\mathcal{O}}
\def\sI{\mathscr{I}}
\def\p{\partial}
\def\<{\langle}
\def\>{\rangle}
\renewcommand{\o}{\circ}
\def\cq{{/\!\!/}}
\def\acts{\circlearrowleft}
\def\rep{(G\acts V,d,\si)}
\def\Lip{\on{Lip}}
\let\on=\operatorname
\newcommand{\sr}[1]%
{\ifmmode{}^\dagger\else${}^\dagger$\fi\ifvmode
\vbox to 0pt{\vss
 \hbox to 0pt{\hskip\hsize\hskip1em
 \vbox{\hsize3cm\raggedright\pretolerance10000
 \noindent #1\hfill}\hss}\vss}\else
 \vadjust{\vbox to0pt{\vss%
 \hbox to 0pt{\hskip\hsize\hskip1em%
 \vbox{\hsize3cm\raggedright\pretolerance10000%
 \noindent #1\hfill}\hss}\vss}}\fi%
}
\newcommand{\tC}[2]{\tensor[^{#1}]{C}{^{#2}}}
\title[Lifting differentiable curves from orbit spaces]
{Lifting differentiable curves from orbit spaces}
\author[Adam Parusi\'nski and Armin Rainer]
{Adam Parusi\'nski and Armin Rainer}
\address {Adam Parusi\'nski: Univ. Nice Sophia Antipolis, CNRS, LJAD, UMR 7351, 06108 Nice, France}
\email{adam.parusinski@unice.fr}
\address{Armin Rainer: Fakult\"at f\"ur Mathematik, Universit\"at Wien, 
Oskar-Morgenstern-Platz~1, A-1090 Wien, Austria}
\email{armin.rainer@univie.ac.at}
\begin{document}

\begin{abstract}
  Let $\rho : G \rightarrow \on{O}(V)$ be a real finite dimensional orthogonal representation of a compact Lie group, 
  let $\si = (\si_1,\ldots,\si_n) : V \to \R^n$, where $\si_1,\ldots,\si_n$ form a minimal system of homogeneous 
  generators of the 
  $G$-invariant polynomials on $V$, and set $d = \max_i \deg \si_i$.
  We prove that for each $C^{d-1,1}$-curve $c$ in $\si(V) \subseteq \R^n$ there exits a locally Lipschitz lift over $\si$, 
  i.e., a locally Lipschitz curve 
  $\overline c$ in $V$ so that $c = \si \o \overline c$, 
  and we obtain explicit bounds for the Lipschitz constant of $\overline c$ in terms of $c$. 
  Moreover, we show that each $C^d$-curve in $\si(V)$ admits a $C^1$-lift.
  For finite groups $G$ we deduce a multivariable version and some further results.
\end{abstract}

\thanks{Supported by the Austrian Science Fund (FWF), Grant P~26735-N25, and by ANR project STAAVF (ANR-2011 BS01 009).}
\keywords{Smooth mappings into orbit spaces, Lipschitz, $C^1$, and real analytic lifts}
\subjclass[2010]{22E45, 57S15, 14L24, 26A16}
\dedicatory{Dedicated to the memory of Mark Losik}
\date{May 10, 2015}

\maketitle

\section{Introduction and main results}

\subsection{Differentiable roots of hyperbolic polynomials}

Let us begin by describing the most important special case of our main theorem.

\begin{example}[Choosing differentiable roots of hyperbolic polynomials] \label{ex:1}
Let the symmetric group $\on{S}_n$ act on $\R^n$ by permuting the coordinates. The algebra of invariant polynomials 
$\R[\R^n]^{\on{S}_n}$ is generated by the elementary symmetric functions 
$\si_i = \sum_{j_1<\ldots<j_i} x_{j_1}\cdots x_{j_i}$. Considering the mapping 
$\si=(\si_1,\ldots,\si_n) : \R^n \to \R^n$, we may identify, in view of Vieta's formulas, 
each point $p$ of the image $\si(\R^n)$ uniquely with the 
monic polynomial $P_a = z^n + \sum_{j=1}^n a_j z^{n-j}$ whose unordered $n$-tuple of roots constitutes the fiber of $\si$ 
over $p$; two points in the fiber differ by a permutation. So the semialgebraic subset $\si(\R^n) \subseteq \R^n$ can be 
identified with the space of \emph{hyperbolic} polynomials of degree $n$, i.e., monic polynomials with all roots real. 

Suppose that the coefficients $a=(a_j)_{j=1}^n$ are functions depending in a smooth way on a real parameter $t$, i.e., 
$a : \R \to \R^n$ is a smooth curve with $a(\R) \subseteq \si(\R^n)$. Then we may ask how regular the roots of $P_a$ 
can be parameterized. This is a classical much studied problem with important applications in partial differential 
equations. We shall just mention three results which will be of interest in this paper.
\begin{enumerate}
        \item If $a$ is $C^{n-1,1}$ then any continuous parameterization of the roots of $P_a$ is locally Lipschitz with 
        uniform Lipschitz constant.
        \item If $a$ is $C^{n}$ then there exists a $C^1$-parameterization of the roots; actually any differentiable 
        parameterization is $C^1$.
        \item If $a$ is $C^{2n}$ then there exists a twice differentiable parameterization of the roots. 
\end{enumerate} 
The first result is a version of Bronshtein's theorem due to \cite{Bronshtein79}; a different proof was given 
by Wakabayashi \cite{Wakabayashi86}. In our recent note \cite{ParusinskiRainerHyp} we presented another independent proof 
of (1) the method of which works in the general situation considered in the present paper; see below. 
For the second and third  
result we refer to \cite{ColombiniOrruPernazza12}; see also \cite{ParusinskiRainerHyp} for a different proof, and  
\cite{Mandai85} and \cite{KLM04} for the same conclusions under stronger assumptions. 
The results (1), (2), and (3) are optimal. 
Most notably, there are $C^\infty$-curves $a$ so that the roots of $P_a$ 
do not admit a $C^{1,\om}$-parameterization for any modulus of continuity $\om$.      
\end{example}

Let $V$ be any finite dimensional Euclidean vector space.
For an open subset $U \subseteq \R^m$ and $p \in \N_{\ge 1}$, we denote by $C^{p-1,1}(U,V)$ 
the space of all mappings $f \in C^{p-1}(U,V)$ so that each partial derivative $\p^\al f$ of order
$|\al|=p-1$ is locally Lipschitz. It is a Fr\'echet space with the following system of seminorms,
\[
  \|f\|_{C^{p-1,1}(K,V)} = \|f\|_{C^{p-1}(K,V)} + \sup_{|\al|=p-1}\on{Lip}_K(\p^\al f), 
  \quad \on{Lip}_K(f) = \sup_{\substack{x,y \in K\\ x \ne y}} \frac{\|f(x)-f(y)\|}{\|x-y\|},
\]
where $K$ ranges over (a countable exhaustion of) the compact subsets of $U$; 
on $\R^m$ we consider the $2$-norm $\|~\|=\|~\|_2$. 
By Rademacher's theorem, the partial derivatives of order $p$ of a function $f \in C^{p-1,1}(U,V)$ exist almost 
everywhere.

\subsection{The general setup}

Let $G$ be a compact Lie group and let $\rho : G \rightarrow \on{O}(V)$ be an 
orthogonal representation in a real finite dimensional Euclidean 
vector space $V$ with inner product $\langle ~\mid~ \rangle$. 
For short we shall write $G \acts V$. 
By a classical theorem of Hilbert and Nagata, 
the algebra $\mathbb{R}[V]^{G}$ of invariant polynomials on $V$ 
is finitely generated. 
So let $\{\si_i\}_{i=1}^n$ be a system of homogeneous generators 
of $\mathbb{R}[V]^{G}$ which we shall also call a system of \emph{basic invariants}. 

A system of basic invariants $\{\si_i\}_{i=1}^n$
is called \emph{minimal} if there is no polynomial relation of the 
form $\si_i = P(\si_1,\ldots,\widehat{\si_i},\ldots,\si_n)$, or equivalently, 
$\{\si_i\}_{i=1}^n$ induces a basis of the real vector space $\R[V]^G_+/(\R[V]^G_+)^2$, 
where $\R[V]^G_+ = \{f \in \R[V]^G : f(0)=0\}$; cf.\ \cite[Section 3.6]{DK02}.   
The elements in a minimal system of basic invariants may not be unique but its number and its degrees 
$d_i := \deg \si_i$ are unique. Let us set
\[
  d:= \max_{i=1,\ldots,n} d_i.
\]

Given a system of basic invariants $\{\si_i\}_{i=1}^n$, 
we consider the \emph{orbit mapping} 
$\sigma = (\sigma_1,\ldots,\sigma_n) : V \rightarrow \mathbb{R}^n$. 
The image $\sigma(V)$ is a semialgebraic set in the categorical quotient 
$V /\!\!/ G := \{y \in \mathbb{R}^n : P(y) = 0 ~\mbox{for all}~ P \in \sI\}$,  
where $\sI$ is the ideal of relations between $\sigma_1,\ldots,\sigma_n$. 
Since $G$ is compact, $\sigma$ is proper and separates orbits of $G$, and 
it thus induces a homeomorphism $\tilde \si$ between the orbit space $V/G$ and $\sigma(V)$.

Let $H = G_v = \{g \in G : gv=v\}$ be the isotropy group of $v \in V$ and $(H)$ its conjugacy class 
in $G$;  $(H)$ is called the \emph{type} of the orbit $Gv=\{gv : g \in G\}$. Let $V_{(H)}$ be the union of all
orbits of type $(H)$. Then $V_{(H)}/G$ is a smooth manifold and  
the collection of connected components of the manifolds $V_{(H)}/G$ 
forms a stratification of $V/G$ by orbit type; cf.\ \cite{Schwarz80}.  
Due to \cite{Bierstone75}, $\tilde \si$ is an isomorphism between the orbit type stratification of $V/G$ 
and the natural stratification of $\si(V)$ as a semialgebraic set; it is analytically locally trivial
and thus satisfies Whitney's conditions (A) and (B). 
The inclusion relation on the set of subgroups of $G$ induces a partial ordering on the family of orbit types. 
There is a unique minimal orbit type, the principal orbit type, corresponding 
to the open and dense submanifold $V_{\on{reg}}$ 
consisting of points $v$, where the slice 
representation $G_v \acts N_v$ is trivial; see Subsection~\ref{ssec:slice} below.
The projection $V_{\on{reg}} \to V_{\on{reg}}/G$ is a locally trivial fiber bundle.
There are only finitely many isomorphism classes of slice representations.

A representation $G \acts V$ is called \emph{polar}, 
if there exists a linear subspace $\Si \subseteq V$, 
called a \emph{section}, 
which meets each orbit orthogonally; cf.\ \cite{Dadok85}, \cite{DK85}. 
The trace of the $G$-action on $\Si$ is the action of the \emph{generalized Weyl group} 
$W(\Si) = N_G(\Si)/Z_G(\Si)$ on $\Si$, where 
$N_G(\Si) := \{g \in G : g\Si = \Si\}$ and 
$Z_G(\Si) := \{g \in G : gs = s \text{ for all } s \in \Si\}$. 
This group is finite, and it is a reflection group if 
$G$ is connected. The algebras $\R[V]^G$ and $\R[\Si]^{W(\Si)}$ are isomorphic via restriction, 
by a generalization of Chevalley's restriction theorem due to \cite{DK85} and  
independently \cite{Terng85}, 
and thus the orbit spaces $V/G$ and $\Si/W(\Si)$ are isomorphic.

We shall fix a minimal system of basic invariants $\{\si_i\}_{i=1}^n$ and 
the corresponding orbit mapping $\si$. The given data will be abbreviated by the 
tuple $\rep$.  

\subsection{Smooth structures on orbit spaces}
We review some ways to endow the orbit space $V/G$ with a smooth structure and stress the connection to the lifting 
problem studied in this paper. The results and constructions mentioned in this subsection will not be used later in the paper.

A smooth structure on a non-empty set $X$ can be introduced by specifying any of the 
following families of mappings 
together with some compatibility conditions:
\begin{itemize}
  \item the smooth functions on $X$ (differential space) 
  \item the smooth mappings into $X$ (diffeological space)
  \item the smooth curves in $X$ and the smooth functions on $X$ (Fr\"olicher space)
\end{itemize}
More precisely: A \emph{differential structure} on $X$ is a family $\cF_X$ of functions $X \to \R$, along with the associated 
initial topology on $X$, so that 
\begin{itemize}
   \item if $f_1,\ldots,f_n \in \cF_X$ and $g \in C^\infty(\R^n)$ then $g \o (f_1,\ldots,f_n) \in \cF_X$
   \item if $f : X \to \R$ is locally the restriction of a function in $\cF_X$ then $f \in \cF_X$.   
 \end{itemize}
The pair $(X,\cF_X)$ is called a \emph{differential space}. 

A \emph{diffeology} on $X$ is a family $\cD_X$ of mappings $U \to X$, where $U$ is any \emph{domain}, 
i.e., open in some $\R^n$, so that 
\begin{itemize}
   \item $\cD_X$ contains all constant mappings $\R^n \to X$ (for all $n$)
   \item for each $p : U \to X \in \cD_X$, each domain $V$, and each $q \in C^\infty(V,U)$, also $p\o q \in \cD_X$
   \item if $p : U \to X$ is locally in $\cD_X$ then $p \in \cD_X$.
\end{itemize} 
The pair $(X,\cD_X)$ is called a \emph{diffeological space}.

A \emph{Fr\"olicher structure} on $X$ is a pair $(\cC_X,\cF_X)$ consisting of a subset $\cC_X \subseteq X^\R$ 
and a subset $\cF_X \subseteq \R^X$ so that
\begin{itemize}
  \item $f \in \cF_X$ if and only if $f \o c \in C^\infty(\R,\R)$ for all $c \in \cC_X$
  \item $c \in \cC_X$ if and only if $f \o c \in C^\infty(\R,\R)$ for all $f \in \cF_X$.  
\end{itemize}
The triple $(X,\cC_X,\cF_X)$ is called a \emph{Fr\"olicher space}. 
The Fr\"olicher structure on $X$ generated by a subset $\cC \subseteq X^\R$ (respectively $\cF \subseteq \R^X$) is the 
finest (respectively coarsest) Fr\"olicher structure $(\cC_X,\cF_X)$ on $X$ with $\cC \subseteq \cC_X$ 
(respectively $\cF \subseteq \cF_X$).

A mapping $\ph : X \to Y$ between two spaces of the same kind is called \emph{smooth} if
\begin{itemize}
  \item $\ph^* \cF_Y \subseteq \cF_X$ in the case of differential spaces
  \item $\ph_* \cD_X \subseteq \cD_Y$ in the case of diffeological spaces
  \item $\ph_* \cC_X \subseteq \cC_Y$, equivalently $\ph^* \cF_Y \subseteq \cF_X$, 
  equivalently $\cF_Y \o \ph \o \cC_X \in C^\infty$ in the case of Fr\"olicher spaces.
\end{itemize}

Any of the above forms a category, and 
the category of smooth finite dimensional manifolds with smooth mappings in the usual sense forms a full 
subcategory in each of them. 

\medskip

The orbit space $V/G$ can be given a differential structure by defining a function on $V/G$ to be smooth 
if its composite with the projection $V \to V/G$ is smooth, i.e., $\cF_{V/G}= C^\infty(V/G) \cong C^\infty(V)^G$. 
On the other hand $\si(V)$ has a differential  
structure defined by restriction of the smooth functions on $\R^n$, i.e., 
$\cF_{\si(V)} = \{f|_{\si(V)} : f \in C^\infty(\R^n)\}$. 
By Schwarz' theorem \cite{Schwarz75}, $\si^* C^\infty(\R^n) = C^\infty(V)^G$ and so $\tilde \si$ is an isomorphism of 
$V/G$ and $\sigma(V)$ together 
with their differential structures. 
In other words quotient and subspace differential structure coincide.
We have 
\begin{align*}
  C^\infty(\R,\si(V)) &:= \{c \in C^\infty(\R,\R^n) : c(\R) \subseteq \si(V)\} \\
  &= \{c \in \si(V)^\R : f\o c \in C^\infty(\R,\R) \text{ for all } f \in C^\infty(V)^G\}.  
\end{align*}
We may also consider the curves in $\si(V)$ that admit a smooth lift over $\si$,
\[
  \si_* C^\infty(\R,V) = \{\si \o c : c \in C^\infty(\R,V)\}.
\]
In general the inclusion $\si_* C^\infty(\R,V) \subseteq C^\infty(\R,\si(V))$ is strict (cf.\ Example \ref{ex:1}). 
The set of functions $C^\infty(V)^G$ on the one hand and the set of curves $\si_* C^\infty(\R,V)$ on the 
other hand give rise to Fr\"olicher space structures on the orbit space $V/G = \si(V)$ that turn out 
to coincide:  
The Fr\"olicher structure on $\si(V)$ generated by $C^\infty(V)^G$ as well as that generated by 
$\si_* C^\infty(\R,V)$ is $(C^\infty(\R,\si(V)),C^\infty(V)^G)$. Indeed, we have 
\[
  C^\infty(V)^G \cong \{f \in \R^{\si(V)} : f \o c \in C^\infty(\R,\R) \text{ for all } c \in \si_* C^\infty(\R,V)\},
\]
for if $f \o c \in C^\infty$ for all $c \in \si_* C^\infty(\R,V)$ then $f \o \si$ is $C^\infty$, 
by Boman's theorem \cite{Boman67}. 
It follows that the quotient and the subspace Fr\"olicher structure coincide on $\si(V)$. 

However, the quotient diffeology $\cD_q$ and the subspace diffeology $\cD_s$ on $\si(V)$ fall apart.  
The quotient diffeology $\cD_q$ with respect to 
the orbit mapping $\si: V \to \si(V)$ is the finest diffeology of $\si(V)$ such that $\si: V \to \si(V)$ is smooth.  
A mapping $f : U \to \si(V)$ belongs to $\cD_q$ if and only if it lifts locally over $\si$, 
i.e., for each $x \in U$ there is 
a neighborhood $U_0$ and a $C^\infty$-mapping $\overline f : U_0 \to V$ so that $f = \si \o \overline f$ on $U_0$.
The subspace diffeology $\cD_s$ on $\si(V)$ is the coarsest diffeology    
of $\si(V)$ such that the inclusion $\si(V) \hookrightarrow \R^n$ is smooth.
A mapping $U \to \si(V)$ belongs to $\cD_s$ if and only if the 
composite $U \to \si(V) \hookrightarrow \R^n$ is smooth. 
Evidently, $\cD_q \subseteq \cD_s$, and the inclusion is strict (cf.\ Example \ref{ex:1}). 

\subsection*{The orbit space as a differentiable space} 

Let us finally consider $V/G$ as a differentiable space in the sense of Spallek \cite{Spallek69}. 
We follow the presentation in \cite{Navarro-GonzalezSancho-de-Salas03}.

An $\R$-algebra $A$ is called a \emph{differentiable algebra} if it is isomorphic to $C^\infty(\R^n)/\mathfrak{a}$ 
for some positive integer $n$ and some closed ideal $\mathfrak a$ in $C^\infty(\R^n)$.
Any differentiable algebra $A$ has a unique Fr\'echet topology such that the algebra isomorphism 
$A \cong C^\infty(\R^n)/\mathfrak{a}$ is a homeomorphism, cf.\ \cite[Theorem 2.23]{Navarro-GonzalezSancho-de-Salas03}.
The real spectrum $\on{Spec}_r A$ of $A = C^\infty(\R^n)/\mathfrak{a}$ is homeomorphic to $\{x \in \R^n : f(x) = 0,~ \forall f \in \mathfrak a\}$, 
cf.\ \cite[Proposition 2.13]{Navarro-GonzalezSancho-de-Salas03}.

A locally ringed space $(X,\cO_X)$ is said to be an \emph{affine differentiable space} if it is 
isomorphic to the real spectrum $(\on{Spec}_r A, \tilde A)$ of some differential algebra $A$. Here $\tilde A$ is the sheaf 
associated to the presheaf $U \leadsto A_U$, where $A_U = \{a/b : a,b \in A,~b(x) \ne 0 , ~\forall x \in U\}$ denotes the localization. 
A locally ringed space $(X,\cO_X)$ is said to be a \emph{differentiable space} if 
each point $x \in X$ has an open neighborhood $U$ in $X$ such that $(U,\cO_X|_U)$ is an affine differentiable space.
Sections of $\cO_X$ on an open set $U \subseteq X$ are called \emph{differentiable functions} on $U$.
A differentiable space $(X,\cO_X)$ is said to be \emph{reduced} if for each open set $U \subseteq X$ and every 
differentiable function $f \in \cO_X(U)$, we have $f=0$ if and only if $f(x) =0$ for all $x \in U$. 

The space $\R^n$ is a reduced affine differentiable space: let $C^\infty_{\R^n}$ denote the sheaf of $C^\infty$-functions on $\R^n$, 
then
$(\on{Spec}_r C^\infty(\R^n), C^\infty_{\R^n})  \cong (\R^n,C^\infty_{\R^n})$, cf.\ \cite[Example 3.15]{Navarro-GonzalezSancho-de-Salas03}.

Let $Z$ be a topological subspace of $\R^n$. 
A continuous function $f : Z \to \R$ is said to be of class $C^\infty$ if each point $z \in Z$ has an open neighborhood  
$U_z$ in $\R^n$ and there exists $F \in C^\infty(U_z)$ such that $f|_{Z \cap U_z} = F|_Z$. 
Thus we obtain a sheaf $C^\infty_Z$ of continuous 
functions on $Z$, and $(Z,C^\infty_Z)$ is a reduced affine differentiable space; cf.\ \cite[Corollary 5.8]{Navarro-GonzalezSancho-de-Salas03}.
The category of reduced differentiable spaces is equivalent to the category of reduced ringed spaces $(X,\cO_X)$ with the property that 
each $x \in X$ has an open neighborhood $U$ such that $(U,\cO_X|_U)$ is isomorphic to $(Z, C^\infty_Z)$ for some closed 
subset $Z$ of an affine space $\R^n$; cf.\ \cite[Theorem 3.23]{Navarro-GonzalezSancho-de-Salas03}.

\medskip

Let us turn to our situation. We equip the orbit space $V/G$ (with the quotient topology and) with the structural sheaf 
$\cO_{V/G}$, where $\cO_{V/G}(U) := \{f \in C^0(U,\R) : f \o \pi \in C^\infty(\pi^{-1}(U))\} \cong C^\infty(\pi^{-1}(U))^G$ and $\pi : V \to V/G$ denotes the quotient mapping. 
On the closed subset $\si(V)$ of $\R^n$ we consider the structure of reduced affine differentiable space induced by $\R^n$, 
i.e., $(\si(V), C^\infty_{\si(V)})$. It follows from Schwarz's theorem and the localization theorem for smooth functions 
(see \cite[p.\ 28]{Navarro-GonzalezSancho-de-Salas03}) that $\si$ induces an isomorphism of the differentiable 
spaces $(V/G,\cO_{V/G})$ and $(\si(V), C^\infty_{\si(V)})$; see \cite[Theorem 11.14]{Navarro-GonzalezSancho-de-Salas03}.  
Note that the reduced affine differentiable space $(V/G,\cO_{V/G})$ is the differential space $(V/G,\cF_{V/G})$ considered above.

\subsection{The main results}

In this paper we shall be concerned with the lifting properties of arbitrary elements in $C^\infty(\R,\si(V))$ 
(or in $\cD_s$).

Let $I \subseteq \R$ be an open interval and
let $c : I \to V/G = \si(V) \subseteq \mathbb{R}^n$ be a curve 
in the orbit space $V/G$ of $\rep$.
A curve $\overline c : I \to V$ is called a \emph{lift of $c$ over $\si$}, if $c = \sigma \circ \overline c$ 
holds. 
We will consider curves $c$ in $V/G = \si(V)$ that are in some H\"older class $C^{k,\al}$, this means 
that $c$ is $C^{k,\al}$ as curve in $\R^n$ with the image contained in $\si(V)$, and it will be denoted by 
$c \in C^{k,\al}(I,\si(V))$. 
Note that any $c \in C^0(I,\si(V))$ admits a lift $\overline c \in C^0(I,V)$, by \cite{MY57} 
or \cite[Proposition~3.1]{KLMR05}.
The problem of lifting curves over invariants is independent of the 
choice of a system of basic invariants as any two such choices differ by a polynomial 
diffeomorphism.

This problem was considered in this generality for the first time in \cite{AKLM00}; it was shown that 
$\si_* C^\infty(\R,V)$ contains all elements in $C^\infty(\R,\si(V))$ that do not meet lower dimensional strata of 
$\si(V)$ with infinite order of flatness. 
A $C^d$-curve in $\si(V)$ admits a differentiable lift, due to \cite{KLMR05}. 
In \cite{KLMR06} and \cite{KLMRadd} 
the following generalization of Example~\ref{ex:1} was obtained:
Let $G$ be \emph{finite}, write $V=V_1 \oplus \cdots \oplus V_l$ as an orthogonal 
direct sum of irreducible subspaces $V_i$, and set $$k = \max \{d,k_1,\ldots,k_l\},$$ where $k_i$ 
is the minimal cardinality of non-zero orbits in $V_i$.
Then $C^k$ (resp.\ $C^{k+d}$) curves in $V/G$ admit $C^1$ (resp.\ twice differentiable) lifts. 
This result was achieved by reducing the general case $G \acts V$ to the case of the standard action of the symmetric 
group $\on{S}_n \acts \R^n$ and then applying Bronshtein's theorem. This technique works only for finite groups and 
it yields a corresponding result for polar representations (since the associated Weyl group is finite).

The ideas of our new proof of Bronshtein's theorem in \cite{ParusinskiRainerHyp} 
led us to the main results of this paper: 
\begin{itemize}
  \item We show that $C^{d-1,1}$-curves in the orbit space of \emph{any} representation $\rep$ admit $C^{0,1}$-lifts 
  and we obtain explicit bounds for the Lipschitz constants (Theorem~\ref{thm:main}).
  \item We prove that $C^{d}$-curves in the orbit space of \emph{any} representation $\rep$ admit $C^{1}$-lifts 
  (Theorem~\ref{mainC1}). 
  \item If $G$ is a finite group we find that 
  \begin{itemize}
    \item each continuous lift of a $C^{d-1,1}$-curve is $C^{0,1}$ (Corollary~\ref{cor:fin}),
    \item each differentiable lift of a $C^{d}$-curve is $C^1$ (Corollary~\ref{cor:twice}),
    \item each $C^{2d}$-curve admits a twice differentiable lift (Corollary~\ref{cor:twice}).
  \end{itemize}
  \item If $G$ is a finite group we also obtain that each continuous lift of a $C^{d-1,1}$-
  mapping of \emph{several variables} into the orbit space is $C^{0,1}$ with uniform Lipschitz constants
  (Corollary~\ref{cor:several}).
  \item As a by-product of the problem of gluing together local lifts (see Section \ref{sec:proofC1}) we show that 
  real analytic curves in the orbit space of \emph{any} representation $\rep$ can be lifted \emph{globally} 
  (Theorem~\ref{thm:realanalytic}). This extends a result of \cite{AKLM00} who proved the existence of local real 
  analytic lifts, and global ones if $G \acts V$ is polar.     
\end{itemize}
Our proofs do not rely on Bronshtein's result but we reprove it.  

\begin{theorem} \label{thm:main}
  Let $\rep$ be a real finite dimensional orthogonal representation of a compact Lie group. 
  Then any $c \in C^{d-1,1}(I,\si(V))$ admits a lift $\overline c \in C^{0,1}(I,V)$. More precisely, 
  for any relatively compact subset $I_0 \Subset I$, there is a neighborhood $I_1$ with $I_0 \Subset I_1 \Subset I$ 
  so that 
  \begin{align}\label{eq:uniform}
  \on{Lip}_{I_0}(\overline c)  & \le C\, \big(\max_i \|c_i\|^{\frac 1 {d_i}}_{C^{d-1,1}(\overline I_1)}\big) 
      \\
  \notag  
     &  \le \tilde C\, \big(1+\max_i \|c_i\|_{C^{d-1,1}(\overline I_1)}\big) 
  \end{align}
for constants $C$ and $\tilde C$ depending only on the intervals $I_0,I_1$ and 
on the isomorphism classes of the slice representations of $G \acts V$ 
and respective minimal systems of basic invariants. 
(More precise bounds are stated in Subsection \ref{ssec:bounds}.)
\end{theorem}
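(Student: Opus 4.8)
The plan is to reduce the general case to a local statement near an arbitrary point $v_0 \in V$, and then to prove the local statement by induction on the dimension of $V$ using the slice theorem, while handling the base case via a direct Lipschitz estimate derived from the polynomial identities satisfied by the $\si_i$. More precisely, fix a relatively compact $I_0 \Subset I$. Since $c(\overline{I_0})$ is compact in $\si(V)$ and $\si$ is proper, any continuous lift $\overline c$ (which exists by \cite{MY57}) has compact image $K \subseteq V$. Around each $v_0 \in K$ the slice theorem gives a $G$-equivariant diffeomorphism of a $G$-invariant neighborhood of $Gv_0$ with the associated bundle $G \times_{G_{v_0}} N_{v_0}$, and the orbit mapping factors, up to a polynomial change of coordinates on the target, through the orbit mapping of the slice representation $G_{v_0} \acts N_{v_0}$ together with the (finitely many, fixed) invariants of $G \acts V$ restricted to the zero section. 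The key point is that when $v_0 \notin V_{\on{reg}}$ the slice representation has strictly smaller dimension, so one may induct; the $\max_i d_i$ for the slice representation is $\le d$, and the H\"older exponent is controlled because a $C^{d-1,1}$ curve composed with the polynomial coordinate change stays $C^{d-1,1}$ with estimable seminorms.

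The base case is the heart of the matter: $v_0$ lies in the principal stratum, i.e. the slice representation $G_{v_0} \acts N_{v_0}$ is trivial. There $\si$ is, near $v_0$, a submersion onto its image (a submanifold), and one gets a local Lipschitz — indeed smooth — lift by the implicit function theorem, with the Lipschitz constant controlled by the norm of a local right inverse of $d\si$. The trouble is uniformity: as $v_0$ ranges over $K$ the constant in the implicit function theorem can blow up when one approaches the boundary of the principal stratum, i.e. a lower-dimensional stratum. This is exactly where the hypothesis $c \in C^{d-1,1}$ (rather than merely $C^1$) and the exponents $1/d_i$ in \eqref{eq:uniform} enter. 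The mechanism, following the new proof of Bronshtein's theorem in \cite{ParusinskiRainerHyp}, is that one does not lift on the principal stratum directly but rather estimates, for a putative lift $\overline c$, the quantity $\|\overline c(t) - \overline c(s)\|$ by comparing the Taylor polynomials of $c$ at $s$: the coefficient bounds $\|\partial^\al c_i\| \le \|c_i\|_{C^{d-1,1}}$ together with the homogeneity $\deg \si_i = d_i$ force $\|\overline c(t) - v_0\| \lesssim \max_i \|c_i - \si_i(v_0) \circ \cdots\|^{1/d_i}$, which is precisely the scaling that survives uniformly across strata. One then patches these local Lipschitz lifts: uniqueness of continuous lifts up to the $G$-action lets one glue local lifts along overlaps after applying group elements (possibly shrinking $I_0$ to $I_1$), and a compactness argument over $\overline{I_1}$ produces a single Lipschitz lift on $I_0$ with the stated constant.

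The order of operations I would follow is: (i) state and prove the uniform local lifting estimate on a neighborhood of a point of the principal stratum, extracting the exponents $1/d_i$ from homogeneity and the Taylor estimate; (ii) formulate the slice-reduction lemma, checking that the polynomial coordinate change on the target preserves the $C^{d-1,1}$-class with estimable seminorms and does not increase $d$; (iii) run the induction on $\dim V$, the base case being (i) (when $V = V_{\on{reg}}$ on the relevant neighborhood) and the inductive step invoking (ii); (iv) globalize: cover $c(\overline{I_1})$ by finitely many neighborhoods of the above type, lift locally, reconcile the local lifts using uniqueness-up-to-$G$ of continuous lifts and equivariance of the slice charts, and assemble a Lipschitz lift on $I_0$; (v) track constants through all of this to obtain \eqref{eq:uniform}, noting that only the finitely many isomorphism classes of slice representations and their chosen minimal systems of basic invariants enter, plus the data of $I_0 \Subset I_1$.

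I expect the main obstacle to be step (i), the uniform Lipschitz estimate near the principal stratum: making the Taylor-comparison argument precise so that the constant depends only on the slice representation and not on the base point $v_0$, and correctly propagating it through the slice reduction so that the induction closes. Secondary difficulties are the gluing in step (iv) — the local lifts a priori differ by $t$-dependent elements of $G$, and one must arrange that these can be absorbed into a single global lift of Lipschitz (not merely continuous) regularity — and the bookkeeping in step (ii) ensuring the polynomial target-reparametrization interacts well with the H\"older norms.
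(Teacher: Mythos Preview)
Your inductive setup has a genuine gap: the claim that ``when $v_0 \notin V_{\on{reg}}$ the slice representation has strictly smaller dimension'' is false. The normal space satisfies $\dim N_{v_0} = \dim V - \dim(Gv_0)$, so for finite $G$ (e.g.\ the standard $\on{S}_n \acts \R^n$, which is the motivating example) the orbit is zero-dimensional and $\dim N_{v_0} = \dim V$ at \emph{every} point. Your induction on $\dim V$ therefore does not close. The paper instead inducts on the \emph{size of $G$}: after removing fixed points one has $V^G=\{0\}$, so for $v\ne 0$ the isotropy $G_v$ is a strictly smaller compact Lie group (smaller dimension, or same dimension but fewer components), and the induction terminates when the group becomes trivial.

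A second, related gap is how you intend to achieve uniformity of constants. Covering $c(\overline{I_1})$ by finitely many slice neighborhoods in $\si(V)$ would make the constants depend on the particular curve $c$. The paper's mechanism is to first assume $\si_1(v)=\|v\|^2$, then rescale to the unit sphere by passing to $\underline c = (1, c_1^{-d_2/2}c_2,\ldots,c_1^{-d_n/2}c_n)$ on the set $\{c_1\ne 0\}$; the image of $\underline c$ lies in the \emph{compact} set $\si(V)\cap\{y_1=1\}$, and a fixed finite cover of \emph{that} by slice neighborhoods makes the constants depend only on the representation. The interpolation (Glaeser-type) inequalities then control the derivatives of $\underline c$ in terms of the quantity $A$ built from $\|c_i\|_{C^{d-1,1}}^{1/d_i}$, which is exactly where the exponents $1/d_i$ enter. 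Points with $c_1=0$ are not handled via the principal stratum at all; one simply sets $\overline c = 0$ there and checks that the local Lipschitz lifts on the components of $\{c_1\ne 0\}$, glued by group elements, extend Lipschitz across these zeros because $\|\overline c\|^2 = c_1$.
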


\begin{remark} \label{rem:main}
  The statement of Theorem \ref{thm:main} reads ``there is a $C^{0,1}$-lift $\overline c$ on the whole interval $I$ so that for all 
      $I_0 \Subset I$ there is a neighborhood $I_1$ such that \eqref{eq:uniform} holds''. 
  Our proof also yields ``for all intervals $I_0$ and $I_1$ with $I_0 \Subset I_1 \Subset I$ there is a Lipschitz lift $\overline c$ 
  on $I_0$ satisfying \eqref{eq:uniform}''.      
\end{remark}

\begin{convention*}
  We will denote by $C=C(G \acts V,\ldots)$ \emph{any} constant depending only on $G \acts V, \ldots$; its 
  value may vary from line to line.  
  Specific constants will bear a subscript like $C_0=C_0(\ldots)$ or $C_1=C_1(\ldots)$.   
  The dependence on $G \acts V$ is to be understood in the following way. 
  For every isomorphism class $H \acts W$ of slice representations of $G \acts V$ fix a minimal system of 
  basic invariants; 
  note that there are only finitely many slice representations up to isomorphism and that 
  $G\acts V$ coincides with its slice representation at $0$. 
  Writing $C=C(G\acts V)$ we mean that the constant $C$ only depends on the isomorphism classes of the 
  slice representations 
  of $G\acts V$ and on the respective fixed minimal systems of basic invariants.     
\end{convention*}

Our second main result is the following.

\begin{theorem} \label{mainC1}
  Let $\rep$ be a real finite dimensional orthogonal representation of a compact Lie group. 
  Then any $c \in C^{d}(I,\si(V))$ admits a lift $\overline c \in C^{1}(I,V)$.
\end{theorem}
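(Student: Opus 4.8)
The plan is to prove the theorem by an induction on $\dim V$, the inductive step being carried out via the slice theorem and the base case being the situation in which $c$ meets the image of the origin. Since the assertion is local in $I$ and since $C^1$-lifts defined on overlapping or adjacent subintervals can be patched together (this gluing is the subject of Section~\ref{sec:proofC1}), it suffices to produce, for every $t_0\in I$, a $C^1$-lift of $c$ on some neighbourhood of $t_0$. Splitting off the trivial subrepresentation we may assume $V^G=0$; then $\si^{-1}(0)=\{0\}$, so $0$ is the only point of $V$ over $0\in\si(V)$, and the universal estimate $|v|\le C\,\max_i|\si_i(v)|^{1/d_i}$ holds for all $v\in V$ (it follows from homogeneity together with the positivity of $v\mapsto\max_i|\si_i(v)|^{1/d_i}$ on the unit sphere).

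First I would dispose of a parameter $t_0$ with $\overline c_0(t_0)\ne0$, where $\overline c_0$ is a continuous lift of $c$ (which exists by \cite{MY57}). Put $v_0:=\overline c_0(t_0)$ and consider the slice representation $G_{v_0}\acts N_{v_0}$ at $v_0$ (see Subsection~\ref{ssec:slice}); since $v_0\notin V^G$ we have $\dim N_{v_0}<\dim V$. By the slice theorem a neighbourhood of the orbit $Gv_0$ is $G$-equivariantly diffeomorphic to a neighbourhood of the zero section in $G\times_{G_{v_0}}N_{v_0}$, and this induces an isomorphism of differentiable spaces between the germ of $\si(V)$ at $\si(v_0)$ and the germ of the orbit space of $G_{v_0}\acts N_{v_0}$ at its vertex. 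Moreover the corresponding degree $d(G_{v_0}\acts N_{v_0})$ is $\le d$, because the restrictions $\si_i|_{v_0+N_{v_0}}$ generate $\R[N_{v_0}]^{G_{v_0}}$ and their homogeneous components in the linear coordinate on $N_{v_0}$ are again $G_{v_0}$-invariant and have degree $\le d_i\le d$. Hence near $t_0$ the curve $c$ transforms into a curve $\widetilde c$ of class $C^d$, a fortiori of class $C^{d(G_{v_0}\acts N_{v_0})}$, in the orbit space of $G_{v_0}\acts N_{v_0}$; the inductive hypothesis yields a $C^1$-lift of $\widetilde c$, and transporting it back through the equivariant diffeomorphism gives a $C^1$-lift of $c$ near $t_0$.

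It remains to treat a neighbourhood of a point $t_0$ with $c(t_0)=0$. Let $Z:=c^{-1}(0)$. Over $Z$ every lift vanishes, so we set $\overline c:=0$ on $Z$, while on $I\setminus Z$, which is a union of open intervals, the previous paragraph together with the gluing of Section~\ref{sec:proofC1} furnishes a $C^1$-lift, which we take as $\overline c$ there; the task is to show that the resulting $\overline c$ is of class $C^1$ near $t_0$. Let $\overline c^{\,\mathrm L}$ denote the Lipschitz lift supplied by Theorem~\ref{thm:main}. For $t_1\in Z$ one has $\overline c^{\,\mathrm L}(t_1)=0$ and $\overline c^{\,\mathrm L}$ is locally Lipschitz, whence $|c_i(t)|=|\si_i(\overline c^{\,\mathrm L}(t))|\le C\,|t-t_1|^{d_i}$ near $t_1$; since $c_i\in C^{d_i}$ this forces $c_i^{(k)}(t_1)=0$ for $k\le d_i-1$. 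If in addition $t_1\in\partial Z$, then applying Rolle's theorem to $c_i^{(d_i-1)}$ along a sequence in $Z$ converging to $t_1$ gives $c_i^{(d_i)}(t_1)=0$ as well, so that $c_i^{(k)}(t_1+s)=o(|s|^{d_i-k})$ for all $0\le k\le d_i$ by Taylor's formula. Combined with the universal estimate this yields $|\overline c(t)|\le C\,\max_i|c_i(t)|^{1/d_i}=o(|t-t_1|)$ at every $t_1\in\partial Z$, so $\overline c$ is differentiable at each point of $Z$ with derivative $0$.

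The remaining point — and, I expect, the main obstacle — is to show that $\overline c'$ is continuous across $\partial Z$, i.e.\ that $\overline c'(t)\to0$ as $I\setminus Z\ni t\to t_1\in\partial Z$; note that some Lipschitz lifts of perfectly smooth curves are not $C^1$ (for $\on{S}_2\acts\R^2$ the curve $t\mapsto(0,-t^2)$ is lifted by the non-$C^1$ curve $t\mapsto(|t|,-|t|)$), so the lift really has to be built with care and the quantitative bounds of Theorem~\ref{thm:main} are indispensable. Here I would rescale homogeneously: for $\delta:=|t-t_1|$ small the curve $\widetilde c_i(u):=\delta^{-d_i}c_i(t_1+\delta u)$ again takes values in $\si(V)$, $u\mapsto\delta^{-1}\overline c(t_1+\delta u)$ is a lift of it, and the flatness of $c$ at $t_1$ established above gives $\max_i\|\widetilde c_i\|_{C^{d_i-1,1}(J)}\to0$ as $\delta\to0$ on a fixed interval $J$ about $u=1$. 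Since the Lipschitz constant is scale invariant, $\sup_{t_1+\delta J}|\overline c'|$ equals the Lipschitz constant of the rescaled lift on $J$, and $t\in t_1+\delta J$; so Theorem~\ref{thm:main}, applied to $\widetilde c$ on $J$ and combined with the consistency of the local lifts under the patching of Section~\ref{sec:proofC1}, bounds $|\overline c'(t)|$ by $C\big(\max_i\|\widetilde c_i\|_{C^{d_i-1,1}(J)}\big)^{1/d}$, which tends to $0$. Hence $\overline c\in C^1$ near $t_0$, completing the induction; the delicate issues throughout are thus (i) arranging the local slice lifts and the value $0$ on $Z$ into one globally $C^1$ lift, and (ii) the rescaling estimate just indicated, both of which lean on Theorem~\ref{thm:main}.
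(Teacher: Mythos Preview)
There are two gaps. The minor one: your induction on $\dim V$ fails whenever the orbit $Gv_0$ is zero-dimensional (in particular for every finite $G$), since then $N_{v_0}=T_{v_0}(Gv_0)^\perp=V$. What does strictly decrease, because $v_0\notin V^G$, is the isotropy group $G_{v_0}$; the paper accordingly inducts on the \emph{size} of $G$ (dimension, then number of components), and with that change your reduction at points where $c(t_0)\ne0$ is fine.

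The substantial gap is at $Z=c^{-1}(0)$. Your Rolle step needs a sequence in $Z\setminus\{t_1\}$ converging to $t_1$, so it is vacuous at an isolated point of $Z$; and at an isolated zero with $c_1''(t_1)\ne0$ (what the paper calls Case~\hyperref[Case1]{1}) the desired conclusion is simply false: $c_1^{(d_1)}(t_1)=c_1''(t_1)\ne0$, your rescaled $\widetilde c$ does not tend to $0$, any lift satisfies $\|\overline c(t)\|^2=c_1(t)\sim(t-t_1)^2$, so $\overline c'(t_1)\ne0$. The two one-sided $C^1$-lifts on the adjacent components of $I\setminus Z$ then have nonzero one-sided derivatives at $t_1$ that agree only up to the $G$-action on $V$, and no constant $g\in G$ repairs this without destroying $C^1$ elsewhere. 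The paper treats such points by a genuinely different reduction: set $\underline c(t)=(t^{-2}c_1,t^{-d_2}c_2,\ldots)$, which at $t=t_1$ lands at $\si(v)$ for some $v$ on the unit sphere, and pass to the slice representation $G_v\acts N_v$ with $G_v<G$; the reduced curve $c^*$ of \eqref{eq:cstar2} is only $\tC{e_i}{d}$ at $t_1$, and Proposition~\ref{prop:pCm} together with Lemma~\ref{lem:Case1} is what makes the induction produce a single $C^1$-lift \emph{through} $t_1$. A further issue in your flat-point argument: Theorem~\ref{thm:main} bounds the Lipschitz constant of \emph{some} lift, not of the particular $C^1$-lift you have assembled (and for $\dim G>0$ not every continuous lift is even Lipschitz); the paper closes this by tracking the constant $A$ through the gluing via \eqref{eq:s}--\eqref{eq:cprime}, so that the glued lift inherits the bound $A_0$ of \eqref{formulaforA}, which then tends to $0$ by Lemma~\ref{flatcaselemma}.
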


Theorem~\ref{thm:main} and Theorem~\ref{mainC1} will be proved in Section~\ref{sec:proof} and Section~\ref{sec:proofC1}, 
respectively.

\medskip

For finite groups $G$ we can show more:

\begin{corollary} \label{cor:fin}
  Let $\rep$ be a real finite dimensional orthogonal representation of a finite group. 
  Then any continuous lift $\overline c$ of $c \in C^{d-1,1}(I,\si(V))$ is locally Lipschitz and satisfies 
  \eqref{eq:uniform} for all intervals $I_0 \Subset I_1 \Subset I$.
\end{corollary}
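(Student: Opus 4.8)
The plan is to derive Corollary~\ref{cor:fin} from Theorem~\ref{thm:main} by exploiting the special feature of finite groups: the fibers of $\si$ are finite, and hence any two continuous lifts of the same curve are related in a very rigid way. First I would fix $c \in C^{d-1,1}(I,\si(V))$ and let $\overline c$ be an arbitrary continuous lift, while Theorem~\ref{thm:main} supplies another lift $\tilde c$ which is locally Lipschitz and satisfies the bound \eqref{eq:uniform} on every pair $I_0 \Subset I_1 \Subset I$. It suffices to show that $\overline c$ and $\tilde c$ differ, locally, by the action of a fixed group element; then $\overline c$ inherits the Lipschitz estimate from $\tilde c$, since $\rho(g)$ is an isometry of $V$.

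The key point is that for a finite group $G$, at each $t$ the set $\si^{-1}(c(t))$ is a single $G$-orbit, so $\overline c(t) = \rho(g_t)\,\tilde c(t)$ for some $g_t \in G$; the issue is the measurability/local constancy of $t \mapsto g_t$. Here I would argue as follows. On the open set where $c(t)$ lies in the principal stratum, $\si$ restricts to a covering map onto $\si(V_{\on{reg}})$ (the projection $V_{\on{reg}} \to V_{\on{reg}}/G$ is a locally trivial fiber bundle with finite fiber, as recalled in the excerpt), so two continuous lifts over a connected piece of this open set differ by a globally constant $g \in G$, and locally the Lipschitz bound transfers verbatim. The set where $c$ meets lower-dimensional strata is closed, possibly with nonempty interior, but on it one can stratify further and induct on the depth of the stratification: on each orbit-type piece the orbit map is again, after passing to the slice representation, a covering onto its image, so the same covering-space argument applies stratum by stratum. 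A cleaner alternative, which I would actually prefer to write up, avoids any case distinction: since $\overline c - \rho(g)\circ \tilde c$ never vanishes unless it vanishes identically on a neighborhood is false in general, one instead notes that the function $t \mapsto \min_{g \in G} \|\overline c(t) - \rho(g)\tilde c(t)\|$ is identically zero, and continuity of $\overline c$, $\tilde c$ together with finiteness of $G$ forces, on any sufficiently small interval, the minimizing $g$ to be locally constant (the distinct continuous functions $t\mapsto \|\overline c(t)-\rho(g)\tilde c(t)\|$ for the finitely many $g$ have a closed zero set; at an interior point of one such zero set we get local agreement, and the complement has empty interior unless a single $g$ works throughout).

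Once local agreement $\overline c = \rho(g)\circ \tilde c$ is established on a covering of $I_0$ by intervals, a compactness argument patches these together: on overlaps the group elements must coincide (two continuous lifts agreeing on an open set and related by group elements must use the same element, since $\rho$ is faithful and the action on $V$ has the property that $\rho(g)v = \rho(h)v$ for all $v$ in a nonempty open set forces $g=h$). Hence $g_t$ is locally constant on $I_0$, and therefore $\overline c$ is locally Lipschitz with $\on{Lip}_{I_0}(\overline c) = \on{Lip}_{I_0}(\tilde c)$ on each piece; covering $I_0$ by finitely many such pieces and using that $\rho(g)$ preserves the norm yields \eqref{eq:uniform} for $\overline c$ with the same constants, for all $I_0 \Subset I_1 \Subset I$ (using the stronger form of Theorem~\ref{thm:main} noted in Remark~\ref{rem:main}).

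The main obstacle I anticipate is the behavior of the lift where the curve $c$ spends time inside, or repeatedly touches, the lower-dimensional strata of $\si(V)$: there the fiber $\si^{-1}(c(t))$ is a smaller orbit, the ``covering'' degenerates, and one must ensure that the group element relating $\overline c$ and $\tilde c$ does not jump as $c$ enters or leaves a stratum. Handling this rigorously requires either the stratification induction via slice representations sketched above, or the global minimal-distance argument combined with the observation that the zero sets of the finitely many continuous functions $t \mapsto \|\overline c(t) - \rho(g)\tilde c(t)\|$ cover $I$, so that by a Baire-type argument some single $g$ works on a dense open set and then, by continuity, on all of a neighborhood. I expect the paper's own proof to take the cleaner route, and I would follow it.
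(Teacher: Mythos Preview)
Your approach has a genuine gap: the group element relating the two lifts is \emph{not} locally constant in general, so the strategy of proving local constancy and then transferring the Lipschitz bound via a fixed isometry cannot succeed. Take $G=\Z/2$ acting on $\R$ by $x\mapsto -x$, so $\si(x)=x^2$ and $d=2$. For $c(t)=t^2$ the curve $\tilde c(t)=t$ is a Lipschitz lift, while $\overline c(t)=|t|$ is another continuous lift; near $t=0$ one needs $g=+1$ for $t>0$ and $g=-1$ for $t<0$, and the element jumps exactly where $c$ touches the lower stratum. Your covering-space argument is only valid over the principal stratum, your claim that the minimizing $g$ is locally constant fails at such points, and your uniqueness-on-overlaps argument (``$\rho(g)v=\rho(h)v$ on an open set forces $g=h$'') is inapplicable because $\tilde c$ is a curve in $V$, not an open subset---its image may lie entirely in the fixed set of a nontrivial element. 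The Baire-type patch you suggest at the end does not rescue this: a single $g$ working on a dense open set does \emph{not} extend by continuity across a stratum crossing, as the same example shows.

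The paper's proof sidesteps all of this with a short chain argument that never asks for local constancy. Given the Lipschitz lift $\overline c$ on $I_0$ supplied by Remark~\ref{rem:main} and an arbitrary continuous lift $\tilde c$, fix $s<t$ in $I_0$ and set $J_g=\{r\in[s,t]:\tilde c(r)=g\,\overline c(r)\}$. These are finitely many closed sets covering the connected interval $[s,t]$, so one can choose $s=t_0<t_1<\cdots<t_\ell=t$ and $g_1,\ldots,g_\ell\in G$ with $t_{i-1},t_i\in J_{g_i}$. Then
\[
\|\tilde c(s)-\tilde c(t)\|\le\sum_{i=1}^\ell\|g_i\overline c(t_{i-1})-g_i\overline c(t_i)\|\le \on{Lip}_{I_0}(\overline c)\,(t-s),
\]
since each $g_i$ acts orthogonally. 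The group element is allowed to change at every $t_i$; what matters is only that consecutive partition points share \emph{some} $g$.
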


\begin{proof}
  Let $\tilde c$ be any continuous lift of $c$, and let $I_0 \Subset I_1 \Subset I$.
  Let $\overline c$ be the Lipschitz lift on $I_0$ 
  provided by Remark~\ref{rem:main}.
  Let $s,t \in I_0$, $s<t$. 
  For each $g \in G$ consider the closed subset $J_g := \{r \in [s,t] : \tilde c(r) = g \overline c(r)\}$ of $[s,t]$. 
  As $[s,t] = \cup_{g \in G} J_g$ there exists a subset $\{g_1,\ldots,g_\ell\} \subseteq G$ and 
  finite sequence $s=t_0 < t_1 < \cdots < t_\ell = t$ so that $t_{i-1},t_i \in J_{g_i}$ for all $i=1,\ldots,\ell$. 
  Then
  \[
    \|\tilde c(s) - \tilde c(t)\| \le  \sum_{i=1}^\ell \|g_i \overline c(t_{i-1}) - g_i\overline c(t_i)\| \le 
    \on{Lip}_{I_0}(\bar c)\, (t-s), 
  \] 
  which implies the assertion.
\end{proof}

Corollary~\ref{cor:fin} readily implies the following result on lifting of mappings in several variables.

\begin{corollary} \label{cor:several}
  Let $\rep$ be a real finite dimensional orthogonal representation of a finite group. 
  Let $U \subseteq \R^m$ be open and let $f \in C^{d-1,1}(U,\si(V))$. 
  Then any continuous lift $\overline f : U \supseteq \Om \to V$ of $f$, on an open subset $\Om$ of $U$, 
  is locally Lipschitz.
  More precisely, 
  for any pair of relatively compact open subsets $\Om_0 \Subset \Om_1 \Subset \Om$ we have   
  \begin{align}\label{eq:uniform2}
  \on{Lip}_{\Om_0}(\overline f)  & \le C\, \big(\max_i \|f_i\|^{\frac 1 {d_i}}_{C^{d-1,1}(\overline \Om_1)}\big) 
      \\
  \notag  
     &  \le \tilde C\, \big(1+\max_i \|f_i\|_{C^{d-1,1}(\overline \Om_1)}\big), 
  \end{align}
  for constants $C=C(G\acts V,\Om_0,\Om_1,m)$ and $\tilde C=\tilde C(G\acts V,\Om_0,\Om_1,m)$.
\end{corollary}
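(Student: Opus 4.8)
The plan is to deduce \eqref{eq:uniform2} from the one-variable Corollary~\ref{cor:fin} by restricting $\overline f$ to line segments. First I would fix the given pair $\Om_0 \Subset \Om_1 \Subset \Om$ and choose $\delta = \delta(\Om_0,\Om_1) > 0$ so small that $\overline{B(p,\delta)} \subseteq \Om_1$ for every $p \in \overline{\Om_0}$ (possible since $\overline{\Om_0}$ is compact and contained in the open set $\Om_1$). I would also fix once and for all three nested open intervals $J_0 \Subset J_1 \Subset J$ with $0 \in J_0$ and all of length at most $\delta$, e.g.\ $J_0 = (-\delta/4,\delta/4) \Subset J_1 = (-\delta/3,\delta/3) \Subset J = (-\delta/2,\delta/2)$.

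Next, for a pair $p,q \in \Om_0$ with $0 < \ell := |p-q| < \delta/4$, I would consider the unit vector $v := (q-p)/\ell$ and the affine curve $\gamma : J \to \R^m$, $\gamma(t) := p+tv$. Then $\gamma(\overline{J}) \subseteq \overline{B(p,\delta)} \subseteq \Om_1$, so $c := f \circ \gamma$ is a $C^{d-1,1}$-curve on $J$ with values in $\si(V)$; by the chain rule and $|v|=1$ one has $\|c_i\|_{C^{d-1,1}(\overline{J})} \le C(m,d)\,\|f_i\|_{C^{d-1,1}(\overline{\Om_1})}$ for each $i$, and $\overline c := \overline f\circ\gamma$ is a continuous lift of $c$. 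Applying Corollary~\ref{cor:fin} to $c$ and $\overline c$ with the intervals $J_0 \Subset J_1$ and using $0,\ell \in J_0$, I obtain
\[
  \|\overline f(p)-\overline f(q)\| = \|\overline c(0)-\overline c(\ell)\| \le \on{Lip}_{J_0}(\overline c)\,\ell \le C(G\acts V,\Om_0,\Om_1,m)\,\big(\max_i\|f_i\|^{1/d_i}_{C^{d-1,1}(\overline{\Om_1})}\big)\,|p-q|.
\]
It is crucial that $J_0 \Subset J_1$ are fixed, independent of $p$ and $q$, so that the constant produced by Corollary~\ref{cor:fin} is the same for all these segments; if one prefers to let them vary, one reads off the required uniformity from the explicit bounds in Subsection~\ref{ssec:bounds}, which depend on the intervals only through their lengths and mutual distances.

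For a pair $p,q \in \Om_0$ with $|p-q| \ge \delta/4$, I would instead bound $\|\overline f(p)-\overline f(q)\| \le 2\sup_{\overline{\Om_0}}\|\overline f\|$ — finite because $\overline f$ is continuous and $\overline{\Om_0}$ is compact — and estimate the supremum directly in terms of $f$. Since $\rho(G) \subseteq \on{O}(V)$, the function $v \mapsto \|v\|^2$ lies in $\R[V]^G$, hence is a polynomial in $\si_1,\ldots,\si_n$ vanishing at the origin; in particular $\si(v) = 0$ forces $v = 0$. Consequently the continuous, positively $1$-homogeneous function $v \mapsto \max_i |\si_i(v)|^{1/d_i}$ vanishes only at the origin, and compactness of the unit sphere yields $\|v\| \le C(G\acts V)\,\max_i|\si_i(v)|^{1/d_i}$ for all $v \in V$. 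Taking $v = \overline f(x)$ for $x \in \overline{\Om_0}$ gives $\sup_{\overline{\Om_0}}\|\overline f\| \le C(G\acts V)\,\max_i\|f_i\|^{1/d_i}_{C^{d-1,1}(\overline{\Om_1})}$, so such pairs obey the displayed inequality up to the extra factor $8/\delta$. Combining the two cases proves the first line of \eqref{eq:uniform2}; the second follows from the elementary inequality $t^{1/d_i} \le 1+t$ for $t \ge 0$. That $\overline f$ is locally Lipschitz is then immediate by applying the estimate to small balls $\Om_0 \Subset \Om_1 \Subset \Om$.

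I do not expect a conceptual difficulty: beyond the reduction to line segments there is no new idea. The one delicate point is keeping the constant from Corollary~\ref{cor:fin} uniform over the entire family of restrictions to segments; this is precisely what forces the fixed reference intervals $J_0 \Subset J_1 \Subset J$ in the first step, and it is also why one cannot simply chain along the segment $[p,q]$ (which need not lie in $\Om_1$ when $\Om_1$ is not convex) but must control $\sup_{\overline{\Om_0}}\|\overline f\|$ by the separate homogeneity argument.
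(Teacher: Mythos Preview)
Your argument is correct. Both your proof and the paper's reduce the multivariable statement to the one-variable Corollary~\ref{cor:fin} by restricting $\overline f$ to segments, but the geometric reductions differ. The paper first reduces (``without loss of generality'') to the case where $\Om_0 \Subset \Om_1$ are coordinate-aligned boxes and then joins any two points $x,y \in \Om_0$ by the axis-parallel path $x \to x+h_1 e_1 \to \cdots \to y$, applying Corollary~\ref{cor:fin} to the curves $t \mapsto \overline f(z+te_j)$ on the fixed edge intervals $I_{0,j} \Subset I_{1,j}$; this yields the Lipschitz bound with an extra factor $\sqrt m$ from $\|\cdot\|_1 \le \sqrt m\,\|\cdot\|_2$. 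Your approach instead treats arbitrary $\Om_0 \Subset \Om_1$ directly: for nearby points you restrict to the straight segment and apply Corollary~\ref{cor:fin} on fixed reference intervals $J_0 \Subset J_1$, while for far-apart points you control $\sup_{\overline{\Om_0}}\|\overline f\|$ via the homogeneity bound $\|v\| \le C(G\acts V)\max_i|\si_i(v)|^{1/d_i}$. The paper's box argument is shorter once the reduction to boxes is accepted, and the axis-parallel path avoids any case distinction on $|p-q|$; on the other hand your treatment makes the passage from ``Lipschitz on small boxes'' to the global bound \eqref{eq:uniform2} on a possibly non-convex $\Om_0$ fully explicit, which the paper leaves implicit in its ``without loss of generality''.
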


\begin{remark*}\hfill
  \begin{enumerate}
    \item If $G$ has positive dimension and $\overline f$ is a $C^{0,1}$-lift of $f$, we may obtain 
    a continuous lift of $f$ that is not locally Lipschitz by simply multiplying $\overline f$ by a suitable continuous 
    mapping $g : U \to G$.
    \item In general there are representations and smooth mappings into the orbit space of such 
    which do not admit continuous lifts. For instance, the orbit space of a finite rotation group of $\R^2$ 
    is homeomorphic to the set $C$ obtained from the sector 
    $\{r e^{i\vh} \in \C : r \in [0,\infty), 0 \le \vh \le \vh_0\}$ 
    by identifying the rays that constitute its boundary. 
    A loop on $C$ cannot be lifted to a loop in $\R^2$ unless it is homotopically trivial in $C \setminus \{0\}$.
  \end{enumerate}
\end{remark*}

\begin{proof} 
  Let $\overline f : U \supseteq \Om \to V$ be a continuous lift of $f$ on $\Om$.
  Without loss of generality we may assume that $\Om_0$ and $\Om_1$ are open boxes parallel to the 
  coordinate axes, $\Om_i = \prod_{j=1}^m I_{i,j}$, $i=0,1$, with $I_{0,j} \Subset I_{1,j}$ for all $j$. 
  Let $x,y \in \Om_0$ and set $h:= y-x$.
  Let $\{e_i\}_{i=1}^m$ denote the standard unit vectors in $\R^m$.
  For any $z$ in the orthogonal projection of $\Om_0$ on the hyperplane $x_j =0$ 
  consider the curve $\overline f_{z,j} : I_{0,j} \to V$ defined by 
  $\overline f_{z,j}(t) := \overline f(z + t e_j)$. 
  By Corollary~\ref{cor:fin}, each $\overline f_{z,j}$ is Lipschitz and $C := \sup_{z,j} \on{Lip}_{I_{0,j}}(\overline f_{z,j}) < \infty$.
  Thus 
  \[
    \|\overline f(x)-\overline f(y)\| 
    \le \sum_{j=0}^{m-1} 
    \Big\|\overline f\big(x+\sum_{k=1}^j h_k e_k\big) - \overline f\big(x+\sum_{k=1}^{j+1} h_k e_k\big)\Big\| 
    \le C \|h\|_1 \le C \sqrt{m} \|h\|_2.
  \] 
  The bounds \eqref{eq:uniform2} follow from \eqref{eq:uniform}.
\end{proof}

\begin{corollary} \label{cor:twice}
  Let $(G \acts V,d,\si)$ be a real finite dimensional orthogonal representation of a finite group. Then: 
  \begin{enumerate}
     \item Any differentiable lift of $c \in C^{d}(I,\si(V))$ is $C^1$.
     \item Any $c \in C^{2d}(I,\si(V))$ admits a twice differentiable lift.
   \end{enumerate} 
\end{corollary}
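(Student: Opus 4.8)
The plan is to reduce both statements to the case of the standard action of the symmetric group $\on{S}_N \acts \R^N$ (for a suitable $N$) via the same device used in \cite{KLMR06}, namely: since $G$ is finite, the orbit of a point $v \in V$ has some cardinality $N$, and the assignment sending $v$ to the monic polynomial whose roots are the coordinates of (a linear coordinate function evaluated on) the orbit $Gv$ exhibits the relevant part of $V/G$ as sitting inside the space of hyperbolic polynomials; more precisely, one realizes $\si(V)$ locally as the image of a polynomial embedding into $\si_{\on{S}_N}(\R^N)$, and a lift of $c$ over $\si$ corresponds to a choice of (a branch of) the roots of the associated family of hyperbolic polynomials. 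Under this correspondence, the regularity statements we want translate into statements about root functions of hyperbolic polynomials, which are exactly results (2) and (3) quoted in Example \ref{ex:1}: a $C^n$ family of hyperbolic polynomials of degree $n$ has $C^1$ root parameterizations and \emph{any} differentiable root parameterization is $C^1$; a $C^{2n}$ family admits a twice differentiable root parameterization. The exponent $d = \max_i d_i$ enters because, after passing to the $\on{S}_N$-picture, the degree of the polynomials that appear is controlled by $d$ rather than by $N$ — this is the point where one invokes that $d_i \le |Gv|$ and the structure of the reduction in \cite{KLMR06,KLMRadd}.

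Concretely, for part (1): let $\overline c : I \to V$ be a differentiable lift of $c \in C^d(I,\si(V))$. Fix $t_0 \in I$; I would work locally near $t_0$, using the slice theorem to replace $G \acts V$ near the orbit $G\overline c(t_0)$ by the slice representation $H \acts N_{\overline c(t_0)}$, where $H = G_{\overline c(t_0)}$ is again finite. Composing with a generic linear functional and passing to the elementary-symmetric coordinates realizes (the relevant coordinates of) $\overline c$ near $t_0$ as a differentiable choice of roots of a $C^d$ family of hyperbolic polynomials of degree at most $d$. By the sharpened Bronshtein-type statement (2) of Example \ref{ex:1} — any differentiable root parameterization of a $C^n$ family of degree $n$ is $C^1$ — these coordinate components of $\overline c$ are $C^1$ near $t_0$. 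Since finitely many generic functionals separate points of $V$ and recover all coordinates polynomially, $\overline c$ itself is $C^1$ near $t_0$; as $t_0$ was arbitrary, $\overline c \in C^1(I,V)$. Part (2) is the analogous argument with regularity raised by $d$: starting from $c \in C^{2d}(I,\si(V))$, Theorem \ref{thm:main} (or already \cite{KLMR05}) furnishes a differentiable lift $\overline c$; locally near any $t_0$ the same slice-and-symmetrize reduction presents the coordinates of $\overline c$ as root functions of a $C^{2d}$ family of hyperbolic polynomials of degree $\le d$, whence by statement (3) of Example \ref{ex:1} there is a twice differentiable root parameterization, and one then has to \emph{match} this twice differentiable local lift with $\overline c$ up to the $H$-action (or simply transport the twice differentiable branch) to conclude that a twice differentiable lift exists near $t_0$; a partition-of-unity / gluing argument over $I$, of the type discussed in Section \ref{sec:proofC1}, then produces a global twice differentiable lift.

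The main obstacle I anticipate is not the root-function input — that is quoted wholesale from Example \ref{ex:1} — but the bookkeeping of the local-to-global passage and the degree count. One must be careful that the polynomial embedding realizing $\si(V)$ inside a product of $\on{S}_N$-orbit spaces is compatible with the slice structure, so that the degree of the hyperbolic polynomials that actually appear is bounded by $d$ and not merely by $|Gv|$ (otherwise the hypothesis $C^d$, resp. $C^{2d}$, would be too weak); this is precisely the refinement over \cite{KLMR06} that the factor $d$ (rather than $k = \max\{d,k_1,\ldots,k_l\}$) is available here, and it relies on Theorem \ref{mainC1} together with the slice-representation reduction rather than on the cruder symmetric-group reduction. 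The second delicate point is gluing the local twice differentiable lifts in part (2): differentiable lifts are unique up to the (discrete) $G$-action only on the set where the lift stays in a fixed orbit type, so one must argue — as in the proof of Corollary \ref{cor:fin}, and using that $G$ is finite — that two local lifts agreeing at a point agree on a neighborhood after applying a single group element, which lets the local twice differentiable branches be patched into one global twice differentiable curve.
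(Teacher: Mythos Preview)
Your reduction to hyperbolic polynomials has a genuine gap at exactly the point you flagged but did not resolve: the degree count. When you compose with a generic linear functional $\ell$ and form the polynomial $\prod_{h \in H}(z - \ell(h\,\cdot\,))$ for the slice group $H = G_{\overline c(t_0)}$, the degree is $|H|$, not $d$. There is no reason for $|H| \le d$; already for $\on{S}_3$ on its two-dimensional reflection representation one has $d = 3$ while $|G| = 6$, and for $\on{S}_4$ on its standard representation a point with isotropy $\on{S}_3$ gives $|H| = 6 > 4 = d$. So from $c \in C^d$ you only get a $C^d$ family of hyperbolic polynomials of degree $|H|$, and Example~\ref{ex:1}(2) requires $C^{|H|}$ coefficients to conclude that a differentiable root is $C^1$. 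Your appeal to ``Theorem~\ref{mainC1} together with the slice-representation reduction'' does not fix this: the slice reduction controls the degree of the \emph{basic invariants} (Lemma~\ref{lem:e} gives $e \le d$), not the group order, and the linear-functional trick is insensitive to the former. This is precisely why \cite{KLMR06} needed the larger constant $k = \max\{d,k_1,\ldots,k_l\}$.

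The paper's route is different and avoids the symmetric-group embedding altogether. The new input is Corollary~\ref{cor:fin}: for finite $G$, \emph{every} continuous lift of a $C^{d-1,1}$-curve is locally Lipschitz, with the explicit bound~\eqref{eq:uniform}. One then runs the same bootstrap scheme as in \cite{KLMR06}, but fed by this sharper Lipschitz statement rather than by Bronshtein via linear functionals. Concretely, for (1): at a point $t_0$ with $c_1(t_0) \ne 0$ or with $c_1(t_0)=0$, $c_1''(t_0)\ne 0$, reduce via Lemma~\ref{lem:red} to a strictly smaller group and induct; at a flat point ($c_1=c_1'=c_1''=0$ at $t_0$), Lemma~\ref{flatcaselemma} shows the constant $A_0$ in~\eqref{formulaforA} tends to $0$ on shrinking intervals, whence Corollary~\ref{cor:fin} forces $\|\overline c'(t)\| \to 0 = \|\overline c'(t_0)\|$. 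Part (2) is one more turn of the same crank. If you want to salvage an argument close in spirit to what you wrote, note that Theorem~\ref{mainC1} already hands you a $C^1$-lift $\tilde c$; then for a differentiable lift $\overline c$ and finite $G$, the closed sets $J_g = \{t : \overline c(t) = g\tilde c(t)\}$ cover $I$, and the differentiability of $\overline c$ at any accumulation point of $J_g$ forces $\overline c' = g\tilde c'$ there --- a compactness argument on $G$ then yields continuity of $\overline c'$. This is a legitimate alternative, but it is not the hyperbolic-polynomial reduction you described.
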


\begin{proof}
  This follows from Corollary~\ref{cor:fin}. 
  It can be proved as in \cite{KLMR06}; see also \cite{KLMRadd}. 
\end{proof}

\subsection{Further examples}

\begin{example}[Choosing differentiable eigenvalues of real symmetric matrices]
Let the orthogonal group $\on{O}(n) = \on{O}(\R^n)$ act by conjugation on the real vector space $\on{Sym}(n)$ of real 
symmetric $n \times n$ matrices, 
$\on{O}(n) \times \on{Sym}(n) \ni (S,A) \mapsto SAS^{-1}=SAS^t \in \on{Sym}(n)$. The algebra of invariant polynomials 
$\R[\on{Sym}(n)]^{\on{O}(n)}$ is isomorphic to $\R[\on{Diag(n)}]^{\on{S}_n}$ by restriction, 
where $\on{Diag}(n)$ is the vector space 
of real diagonal $n \times n$ matrices upon which $\on{S}_n$ acts by permuting the diagonal entries. 
More precisely, $\R[\on{Sym}(n)]^{\on{O}(n)} = \R[\Si_1,\ldots,\Si_n]$, where
$\Si_i(A) = \on{Trace}(\bigwedge^i A : \bigwedge^i \R^n \to \bigwedge^i \R^n)$ is the $i$th
characteristic coefficient of $A$ and $\Si_i|_{\on{Diag}(n)} = \si_i$, where $\si_i$ is the $i$th 
elementary symmetric polynomial and we identify $\on{Diag}(n) \cong \R^n$ (cf.\ \cite[7.1]{MichorH}). 
This means that 
the representation $\on{O}(n) \acts \on{Sym}(n)$ is polar and $\on{Diag(n)}$ forms a section.  

A smooth curve $A : \R \to \on{Sym}(n)$ of symmetric matrices induces a smooth curve of hyperbolic polynomials $P_A$ 
(the characteristic polynomial of $A$), i.e., a smooth curve in the semialgebraic set 
$\si(\on{Diag}(n)) \cong \si(\R^n)$ 
from Example~\ref{ex:1}. Then (1), (2), and (3) in Example~\ref{ex:1} imply regularity results for the eigenvalues of 
$t \mapsto A(t)$ which however turn out to be not optimal. In fact we have the following optimal results. 
\begin{enumerate}
        \item If $A$ is $C^{0,1}$ then any continuous parameterization of the eigenvalues of $A$ is locally Lipschitz 
        with uniform Lipschitz constant.
        \item If $A$ is $C^1$ then there exists a $C^1$-parameterization of the eigenvalues; actually any differentiable 
        parameterization is $C^1$.
        \item If $A$ is $C^{2}$ then there exists a twice differentiable parameterization of the eigenvalues. 
\end{enumerate}
The first result follows from a result due to Weyl \cite{Weyl12}, the second and third were shown in \cite{RainerN}. 
Actually, these results 
are true for normal complex matrices and, in appropriate form, even for normal operators in Hilbert space with common 
domain of definition and compact resolvents; see \cite{RainerN}.  

Here the curve $P_A$ in the orbit space is the projection of the curve $A$ under $\on{Sym}(n) \to \on{Sym}(n)/\on{O}(n)$
and is then lifted over $\on{Diag}(n) \to \on{Diag}(n)/\on{S}_n$. 
\[
  \xymatrix{
     && & \R \ar[dl]_{A} \ar[dd]_{P_A} \ar@/_20pt/@{-->}[dlll] & \\
    \on{Diag}(n) \ar@{^{(}->}[rr] \ar[d] && \on{Sym}(n) \ar[d] &&   \\
    \on{Diag}(n)/\on{S}_n \ar@{=}[rr] && \on{Sym}(n)/\on{O}(n) \ar@{=}[r] & \si(\R^n) \ar@{^{(}->}[r] & \R^n   
  } 
\]
\medskip
\end{example}

\begin{example}[Decomposing nonnegative functions into differentiable sums of squares]
Let the orthogonal group $\on{O}(n)$ act in the standard way on $\R^n$. Then the algebra of invariant polynomials 
$\R[\R^n]^{\on{O}(n)}$ is generated by $\si = \sum_{i=1}^n x_i^2$. The orbit space $\R^n/\on{O}(n)$ can be 
identified with the half-line $\R_{\ge 0} = [0,\infty) = \si(\R^n)$. Each line through the origin of $\R^n$ forms a
section of $\on{O}(n) \acts \R^n$.

Given a smooth nonnegative function $f$, decomposing $f$ into sums of squares amounts to lifting $f$ over $\si$. 
Applying Example~\ref{ex:1}\thetag{1} (actually its multiparameter analogue which follows easily; 
see Corollary~\ref{cor:several})
implies that:
\begin{enumerate}
   \item Any nonnegative $C^{1,1}$ function $f : \R^m \to \R$ is the square of a $C^{0,1}$ function.
\end{enumerate} 
The image of this lift lies in a section of $\on{O}(n) \acts \R^n$. 
This does not apply to the solutions in the
following stronger results which benefit from the additionally 
available space.
\begin{enumerate}
    \item[(2)] Any nonnegative $C^{3,1}$ function $f : \R^m \to \R$ is a sum of $n=n(m)$ squares of $C^{1,1}$ functions.
    \item[(3)] Let $p \in \N$. Any nonnegative $C^{2p}$ function $f: \R \to \R$ is the sum of two squares of $C^p$ functions. 
\end{enumerate} 
Result \thetag{2} was stated by Fefferman and Phong while proving their celebrated inequality in \cite{FP78}; see also 
\cite[Lemma 4]{Guan97}. 
This is sharp in the sense that there exist $C^\infty$ functions $f : \R^m \to \R$, for $m\ge 4$, 
that are not sums of squares of $C^2$ functions; see \cite{BBCP06}. Result \thetag{3} is due to \cite{Bony05}; 
the decomposition depends on $p$. 
\end{example}

\section{Reduction to slice representations}

Let $(G \acts V,d,\si)$ be fixed. Let $V^G = \{v \in V : Gv=v\}$ be the linear subspace of invariant vectors.

\subsection{Dominant invariant}
We may assume without loss of generality that 
\begin{equation} \label{eq:red1}
  \text{$\si_1(v) = \< v \mid v\> = \|v\|^2$ for all $v \in V$.}  
\end{equation}
Indeed, if the invariant polynomial $v \mapsto \< v \mid v\>$ does not belong to the minimal 
system of basic invariants, we just add it. This does not change $d$ unless $d = 1$. 
But in the latter case $V=V^G$ and there is nothing to prove. In fact, if $d=1$ then 
the elements in a minimal system of basic invariants form a system of linear coordinates on $V$.

Under the assumption \eqref{eq:red1} the invariant $\si_1$ is dominant in the following sense:
for all $j =1,\ldots,n$ and all $v \in V$, 
\begin{equation} \label{eq:dom}
  |\si_j(v)|^{\frac 1 {d_j}} \le C\, |\si_1(v)|^{\frac 1 {d_1}} = C\, \|v\|,
\end{equation}
where $C=C(\si)$. Indeed, $|\si_j(v)| \le \max_{\|w\|=1} |\si_j(w)| \, \|v\|^{d_j}$, by homogeneity.

\subsection{Removing fixed points} \label{ssec:fixedpoints}

Let $V'$ be the orthogonal complement of $V^G$ in $V$. 
Then we have $V = V^G \oplus V'$, 
$\mathbb{R}[V]^G = \mathbb{R}[V^G] \otimes \mathbb{R}[V']^G$ and 
$V/G = V^G \times V'/G$. The following lemma is obvious. 

\begin{lemma} \label{fix}
Any lift $\overline c$ of a curve $c=(c_0,c_1)$ in $V^G \times V'/G$ has the form 
$\overline c = (c_0,\overline c_1)$, 
where $\overline c_1$ is a lift of $c_1$. 
\end{lemma}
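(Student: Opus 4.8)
The plan is to unwind the product decomposition carefully and observe that the lifting problem splits along the orthogonal sum. First I would recall the general fact that for a compact group $G$ acting on $V = V^G \oplus V'$ (orthogonal direct sum), the action is trivial on $V^G$, so $\R[V^G]^G = \R[V^G]$ consists of all polynomials on $V^G$, and one may choose the minimal system of basic invariants for $G \acts V$ to be a union of a system of linear coordinates $\{\pi_1,\dots,\pi_m\}$ on $V^G$ together with a minimal system of basic invariants $\{\si'_1,\dots,\si'_{n'}\}$ for the (fixed-point-free) representation $G \acts V'$; this is exactly the statement $\R[V]^G = \R[V^G]\otimes\R[V']^G$. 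Accordingly the orbit map $\si : V \to \R^n$ factors as $\si = (\mathrm{id}_{V^G}, \si')$ under the identification $\R^n = \R^m \times \R^{n'}$ and $V/G = V^G \times V'/G$, with $\si(V) = V^G \times \si'(V')$.

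Next I would write the curve as $c = (c_0, c_1)$ with $c_0 : I \to V^G$ and $c_1 : I \to V'/G = \si'(V')$, and write an arbitrary lift as $\overline c = (\overline c^{(0)}, \overline c^{(1)})$ with values in $V^G \oplus V' = V$. The defining equation $c = \si \circ \overline c$ becomes, componentwise, $c_0 = \mathrm{id}_{V^G} \circ \overline c^{(0)} = \overline c^{(0)}$ and $c_1 = \si' \circ \overline c^{(1)}$. The first identity forces $\overline c^{(0)} = c_0$, and the second says precisely that $\overline c^{(1)} =: \overline c_1$ is a lift of $c_1$ over $\si'$. This gives both directions: any lift of $c$ has the stated form, and conversely $(c_0, \overline c_1)$ is a lift of $c$ whenever $\overline c_1$ lifts $c_1$.

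Since the statement is flagged as ``obvious,'' I would keep the argument to a sentence or two: the only content is the observation that the $V^G$-component of the orbit map is the identity (because a minimal system of basic invariants can be taken to contain linear coordinates on $V^G$), so that reading off the defining equation coordinate-wise immediately yields $\overline c = (c_0, \overline c_1)$. There is no real obstacle here; the only point requiring a moment's care is the compatibility of the \emph{chosen} minimal system of basic invariants with the splitting, and even that is a standard consequence of $\R[V]^G = \R[V^G]\otimes\R[V']^G$ together with the fact (noted already in the $d=1$ discussion above) that a minimal system of basic invariants for a trivial action is a system of linear coordinates. Moreover, by the remark after \eqref{eq:red1} one may assume $\si_1 = \|\cdot\|^2$, and since $\|\cdot\|^2 = \|\cdot|_{V^G}\|^2 + \|\cdot|_{V'}\|^2$ respects the splitting, this normalization is consistent with the product structure, so no regularity or normalization issue arises in passing from $\si$ to $\si'$.
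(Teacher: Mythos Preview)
Your argument is correct and is exactly the ``obvious'' proof the paper has in mind: the paper gives no proof at all, simply stating that the lemma is obvious in light of the decomposition $\R[V]^G = \R[V^G]\otimes\R[V']^G$ and $V/G = V^G \times V'/G$. Your unpacking---choosing the basic invariants as linear coordinates on $V^G$ together with a minimal system for $G\acts V'$, so that $\si = (\mathrm{id}_{V^G},\si')$ and the lifting equation splits componentwise---is precisely the intended one-line content, just written out in full.
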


In view of Lemma~\ref{fix} we may assume that 
\begin{equation} \label{eq:red2}
  V^G = \{0\}.
\end{equation}

\subsection{The slice theorem}  \label{ssec:slice}
For a point $v \in V$ we denote by  
$N_v = T_v(Gv)^{\bot}$ the normal subspace of the orbit $Gv$ at $v$.
It carries a natural $G_v$-action $G_v \acts N_v$.  
The crossed product (or associated bundle) $G \times_{G_v} N_v$ carries the structure 
of an affine real algebraic variety as the categorical (and geometrical) quotient 
$(G \times N_v)\cq G_v$ with respect to the action $G_v \acts (G \times N_v)$ given by 
$h (g,x) = (gh^{-1},hx)$. 
Denote by $[g,x]$ the element of $G \times_{G_v} N_v$ represented by $(g,x) \in G \times N_v$. 
The $G$-equivariant polynomial mapping $\ph : G \times_{G_v} N_v \to V$, $[g,x] \mapsto g(v+x)$, 
where the action $G \acts (G \times_{G_v} N_v)$ is by left multiplication on the first component,  
induces a polynomial mapping $\ps : (G \times_{G_v} N_v)\cq G \to V\cq G$ sending   
$(G \times_{G_v} N_v)/ G$ into $V/ G$. 

The $G_v$-equivariant embedding $\al : N_v \hookrightarrow G \times_{G_v} N_v$ given by $x \mapsto [e,x]$ induces 
an isomorphism $\be : N_v\cq G_v \to (G \times_{G_v} N_v) \cq G$ mapping $N_v/G_v$   
onto $(G \times_{G_v} N_v)/ G$. Set $\et = \ph \o \al$ and $\th = \ps \o \be$.

\[
  \xymatrix{
    N_v \ar@{_(->}[rr]^{\al} \ar[d]_{\ta} \ar@/^20pt/[rrrr]^{\et}
    && G \times_{G_v} N_v \ar[rr]^{\ph} \ar[d] && V \ar[d]^{\si} \\
    N_v/G_v \ar[rr] \ar@{^(->}[d] && (G \times_{G_v} N_v)/G \ar@{^(->}[d] \ar[rr] && V/G \ar@{^(->}[d] \\ 
    N_v \cq G_v \ar[rr]^{\be}  \ar@/_20pt/[rrrr]_{\th}
    && (G \times_{G_v} N_v)\cq G  \ar[rr]^{\ps} && V \cq G 
  } 
\]

\begin{theorem}[{Cf.\ \cite{Luna73}, \cite{Schwarz80}}] \label{thm:slice}
  There is an open ball $B_v \subseteq N_v$ centered at the origin such that the restriction of $\ph$ to 
  $G \times_{G_v} B_v$ is an analytic $G$-isomorphism onto a $G$-invariant neighborhood of $v$ in $V$. 
  The mapping $\th$ is a local analytic isomorphism at $0$ which induces a local homeomorphism of 
  $N_v/G_v$ and $V/G$.
\end{theorem}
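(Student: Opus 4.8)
The plan is to deduce Theorem~\ref{thm:slice} from Luna's slice theorem in the analytic category together with the identification of the crossed product $G\times_{G_v} N_v$ as a categorical quotient that was set up above. First I would recall the content of Luna's étale slice theorem (in the real-analytic, compact-group setting, as in \cite{Schwarz80}): given $v\in V$ with isotropy $H:=G_v$, the differential of the orbit map gives a $G$-equivariant decomposition of a $G$-invariant tubular neighborhood of the orbit $Gv$, and there is an $H$-invariant open ball $B_v\subseteq N_v$ centered at $0$ such that the equivariant map $G\times_H B_v \to V$, $[g,x]\mapsto g(v+x)$, is an analytic $G$-isomorphism onto an open $G$-invariant neighborhood of $v$. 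This is exactly the restriction of $\ph$ to $G\times_H B_v$, so the first assertion of the theorem is essentially a citation, with the one point to verify being that $\ph$ is an immersion at $[e,0]$ — which follows because $T_{[e,0]}(G\times_H N_v)\cong \mathfrak g\cdot v \oplus N_v = T_v(Gv)\oplus N_v = T_v V$ and $d\ph$ is the identity under this identification — and that it is injective on a small enough saturated neighborhood, which is the standard slice argument using compactness of $G$.

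Next I would pass to the quotients. Since $\ph$ restricted to $G\times_H B_v$ is a $G$-equivariant analytic isomorphism onto a $G$-invariant open set $U\ni v$, it descends to a homeomorphism $(G\times_H B_v)/G \to U/G$, and because both sides carry the quotient differentiable (equivalently, Frölicher/differential) structure discussed in the introduction, and $\ph$ matches invariant functions with invariant functions, this descended map is an isomorphism of the local models of the orbit spaces. Composing with the isomorphism $\be: N_v\cq G_v \to (G\times_H N_v)\cq G$ — which comes from the $H$-equivariant inclusion $\al: x\mapsto [e,x]$ inducing an isomorphism of invariant algebras $\mathbb R[G\times_H N_v]^G \cong \mathbb R[N_v]^{G_v}$ (this is the standard fact that functions on an associated bundle $G\times_H N_v$ invariant under the left $G$-action are the same as $H$-invariant functions on the fiber $N_v$) — one obtains that $\th=\ps\o\be$ is a local analytic isomorphism of $N_v\cq G_v$ and $V\cq G$ at $0$, restricting to a local homeomorphism of $N_v/G_v$ and $V/G$ near $0\in N_v/G_v$ and $v\in V/G$. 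Here I would use that $\be$ is an isomorphism of affine varieties globally (not just locally) by the quotient description $(G\times N_v)\cq H$, so the only localization needed is the one coming from $\ph$.

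The main obstacle — or rather the only step that requires genuine care rather than bookkeeping — is establishing that $\ph|_{G\times_H B_v}$ is actually \emph{injective} (hence a global isomorphism onto its image), as opposed to merely an immersion near $[e,0]$. The standard resolution uses that $G$ is compact: one first shrinks to an $H$-invariant ball $B_v$ on which $\exp_v$ (or the linear structure, since $V$ is a vector space here) behaves well, then uses a compactness/averaging argument to choose $B_v$ small enough that $g(v+x)=g'(v+x')$ with $x,x'\in B_v$ forces $g^{-1}g'\in H$ and $x'=(g^{-1}g')x$, i.e.\ $[g,x]=[g',x']$. The second subtle point is checking that the quotient structures genuinely transport under $\ph$ and $\be$ — but this is immediate from Schwarz's theorem and the discussion of quotient differential/differentiable structures in the introduction, since $\ph^*$ and $\be^*$ induce isomorphisms on the relevant algebras of invariants; I would simply invoke those. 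Everything else (equivariance of $\al,\ph$, commutativity of the displayed diagram, properness) is formal.
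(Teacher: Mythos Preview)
The paper does not prove Theorem~\ref{thm:slice} at all: it is stated as a known result, with the citation ``Cf.~\cite{Luna73}, \cite{Schwarz80}'' in the heading, and is used as a black box in the subsequent reduction argument. Your sketch is a reasonable outline of the standard proof of the slice theorem in the compact/real-analytic setting, but there is nothing to compare it to here --- the authors simply import the statement from the literature.
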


\subsection{Reduction} \label{ssec:red}

Let $\{\ta_i\}_{i=1}^m$ be a system of generators of $\R[N_v]^{G_v}$ and  
let $\ta = (\ta_1,\ldots,\ta_m) : N_v \rightarrow \mathbb{R}^m$ be the associated orbit mapping. 
Consider the slice 
\begin{equation}\label{eq:normalslice}
  S_v := v+B_v,  
\end{equation}
where $B_v$ is the open ball from Theorem~\ref{thm:slice}. 
As $\si_i$ is $G_v$-invariant there exists $\pi_i \in \R[\R^m]$ so that
\begin{equation} \label{eq:pi}
  \si_i(x) - \si_i(v) = \pi_i (\ta(x-v)), \quad \text{ for } x \in S_v.  
\end{equation} 
Conversely, every $G_v$-invariant real analytic function in $x-v$ can be written as a real analytic function 
in $\si(x)-\si(v)$ near $v$, by \cite[p.~67]{Schwarz75}, hence 
there is a real analytic mapping $\vh$ defined in a neighborhood of the origin in $\R^n$ with values in $\R^m$
such that 
\begin{equation} \label{eq:vh}
   \ta(x-v) = \vh (\si(x) - \si(v)),  
\end{equation} 
for $x$ in some neighborhood $U_v$ of $v$ in $S_v$.

\begin{lemma} \label{lem:red}
  Let $c=(c_1,\ldots,c_n)$ be a curve in $\si(V)$ with $c_1 \ne 0$ and such that the curve 
  \[
    \underline c := \big(1,{c_1}^{-\frac{d_2}{d_1}} c_2,\ldots,{c_1}^{-\frac{d_n}{d_1}} c_n\big)
  \] 
  lies in $\si(U_v)$.
  Then $\underline c^* := \vh (\underline c -\si(v))$ is a curve in $\ta(U_v-v)$ and 
  \[
    c^* = (c^*_1,\ldots,c^*_m) 
    := ({c_1}^{\frac{e_1}{d_1}} \underline c^*_1,\ldots,{c_1}^{\frac{e_m}{d_1}} \underline c^*_m),
    \quad e_i = \deg \ta_i,
  \]  
  is a curve in $\ta(N_v)$. If $\overline c^*$ is a lift of $c^*$ over $\ta$ then  
  \begin{equation} \label{eq:red}
    {c_1}^\frac{1}{d_1} v + \overline c^*
  \end{equation}
  is a lift of $c$ over $\si$.
\end{lemma}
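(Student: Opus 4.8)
The plan is to verify the lemma by unwinding the definitions and checking that the proposed lift \eqref{eq:red} is indeed a lift of $c$, using the slice theorem and the relations \eqref{eq:pi} and \eqref{eq:vh}. First I would set up notation: write $t \mapsto c(t) = (c_1(t),\ldots,c_n(t))$, assume $c_1(t) \ne 0$ on the interval in question, and introduce the scaling factor $r(t) := c_1(t)^{1/d_1}$ (with a fixed branch of the root; note $c_1 = \|\,\cdot\,\|^2 \ge 0$ by \eqref{eq:red1}, so $r = c_1^{1/2}$ is unambiguous). The rescaled curve $\underline c$ has first coordinate $1$ and, by homogeneity of $\si$, if $w$ satisfies $\si(w) = c$ then $r^{-1} w$ satisfies $\si(r^{-1}w) = \underline c$; this is the motivation for the normalization. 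By hypothesis $\underline c$ lies in $\si(U_v)$, so \eqref{eq:vh} applies and $\underline c^* := \vh(\underline c - \si(v))$ is a well-defined curve, and since \eqref{eq:vh} identifies $\ta(x-v)$ with $\vh(\si(x)-\si(v))$ for $x \in U_v$, the curve $\underline c^*$ has image in $\ta(U_v - v)$; this is the first assertion.

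Next I would treat the claim that $c^* = (c_1^{e_i/d_1}\underline c^*_i)_i$ is a curve in $\ta(N_v)$. The point is homogeneity of the $\ta_i$: each $\ta_i$ is homogeneous of degree $e_i$, so $\ta_i(\la y) = \la^{e_i}\ta_i(y)$ for $\la \ge 0$, which means the image $\ta(N_v)$ is stable under the coordinatewise scaling $(z_i)_i \mapsto (\la^{e_i} z_i)_i$. Concretely, if $y(t) \in N_v$ is chosen with $\ta(y(t)) = \underline c^*(t)$ — possible pointwise since $\underline c^*$ has image in $\ta(U_v - v) \subseteq \ta(N_v)$ — then $r(t) y(t) \in N_v$ and $\ta_i(r y) = r^{e_i}\ta_i(y) = c_1^{e_i/d_1}\underline c^*_i = c^*_i$, so $c^* = \ta(ry)$ lies in $\ta(N_v)$. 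Thus $c^*$ is liftable; let $\overline c^*$ be any lift, $\ta \circ \overline c^* = c^*$.

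The main computation, and the place where one must be careful, is verifying that $r\cdot v + \overline c^*$ is a lift of $c$, i.e., that $\si_i(r v + \overline c^*) = c_i$ for every $i$. Here one uses \eqref{eq:pi}: for $x \in S_v$, $\si_i(x) = \si_i(v) + \pi_i(\ta(x-v))$. I would want to apply this with $x = r v + \overline c^* = v + \big((r-1)v + \overline c^*\big)$, but that requires $x \in S_v$, i.e., $(r-1)v + \overline c^*$ in the ball $B_v$ — which need not hold pointwise. The resolution is the homogeneity/scaling trick again: apply \eqref{eq:pi} to the \emph{normalized} point $v + \overline c^*/r$ (which lies in $U_v \subseteq S_v$ once $\overline c^*/r = \vh(\underline c - \si(v))$-liftable into $B_v$, which is how $U_v$ was chosen), getting $\si_i(v + r^{-1}\overline c^*) = \si_i(v) + \pi_i(\ta(r^{-1}\overline c^*)) = \si_i(v) + \pi_i(\underline c^*)$. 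By \eqref{eq:pi} read backwards with $x$ such that $\si(x) = \underline c$, one has $\pi_i(\underline c^*) = \pi_i(\vh(\underline c - \si(v))) = \underline c_i - \si_i(v)$, so $\si_i(v + r^{-1}\overline c^*) = \underline c_i = c_1^{-d_i/d_1} c_i$ for $i \ge 2$ and $= 1$ for $i = 1$. Finally, homogeneity of $\si_i$ of degree $d_i$ gives $\si_i(r v + \overline c^*) = r^{d_i}\si_i(v + r^{-1}\overline c^*) = c_1^{d_i/d_1}\cdot c_1^{-d_i/d_1} c_i = c_i$, and for $i=1$, $\si_1(rv+\overline c^*) = c_1 \cdot 1 = c_1$. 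So \eqref{eq:red} is a lift of $c$, as claimed.

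I expect the genuine obstacle to be purely bookkeeping: keeping straight the three objects $c$, $\underline c$ (the degree-normalized version), and $c^*$ (the image in the slice coordinates $\ta$), and making sure every application of \eqref{eq:pi} or \eqref{eq:vh} is at a point that actually lies in the prescribed neighborhood $U_v$ or $S_v$ — which is exactly why the statement is phrased with the hypothesis that $\underline c$ lies in $\si(U_v)$ rather than $c$ itself. Once the scaling $v + r^{-1}\overline c^* \in U_v$ is in force, everything else is a homogeneity computation with no analytic content; the differentiability/regularity of the lift is not part of this lemma (it is an algebraic identity), and will be addressed separately when the lemma is applied.
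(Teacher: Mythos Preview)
Your proposal is correct and follows essentially the same route as the paper: show that $r^{-1}\overline c^*$ is a lift of $\underline c^*$ over $\ta$ by homogeneity of the $\ta_i$, deduce via \eqref{eq:pi} and \eqref{eq:vh} that $v + r^{-1}\overline c^*$ is a lift of $\underline c$ over $\si$, and then scale back using homogeneity of the $\si_i$. The paper's proof is more terse (it dismisses the first two assertions as evident and only writes out the last computation), but the logical skeleton is identical; one minor remark is that your worry about whether $v + r^{-1}\overline c^*$ lies in $S_v$ is unnecessary, since \eqref{eq:pi} is a polynomial identity on the whole affine subspace $v + N_v$, not just on the ball $S_v$.
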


\begin{proof}
  Only the last statement is maybe not immediately visible. 
  The curve ${c_1}^{-\frac{1}{d_1}} \overline c^*$ is a lift of $\underline c^*$ over $\ta$, 
  \[
    \ta_i({c_1}^{-\frac{1}{d_1}} \overline c^*) = {c_1}^{-\frac{e_i}{d_1}} \ta_i(\overline c^*) 
    = {c_1}^{-\frac{e_i}{d_1}} c^*_i = \underline c^*_i,   
  \]
  and so, by \eqref{eq:pi} and \eqref{eq:vh}, ${c_1}^{-\frac{1}{d_1}} \overline c^* + v$ is a lift of $\underline c$ over $\si$,
  \[
    \si({c_1}^{-\frac{1}{d_1}} \overline c^* + v) -\si(v) = 
    \pi(\ta({c_1}^{-\frac{1}{d_1}} \overline c^*+v-v)) = \pi(\underline c^*) = \pi(\vh (\underline c -\si(v))) = \underline c -\si(v).
  \]
  By homogeneity, we find $\si_i(\overline c^* + {c_1}^{\frac{1}{d_1}} v) = {c_1}^{\frac{d_i}{d_1}} \underline c_i = c_i$ 
  as required.
\end{proof}

We can assume that  
$\vh$ and all its partial derivatives are 
separately bounded.
In analogy to \eqref{eq:red1} we may assume that $\ta_1(x) = \|x\|^2$ for all $x \in N_v$, thus $e_1 =2$.
Then the following corollary is evident. 

\begin{corollary} \label{cor:red}
  We have $|c^*_1| \le C_0 \, |c_1|$, where $C_0 = \sup_{y} |\vh_1(y)|$.
\end{corollary}

The set $\si(V)$ is closed in $\R^n_y$. Thus \eqref{eq:dom} implies that the set $\si(V) \cap \{y_1 = 1\}$
is compact. It follows that the open cover $\{\si(U_v)\}_{v \in V, \|v\|=1}$ of $\si(V) \cap \{y_1 = 1\}$ has a 
finite subcover 
\begin{equation} \label{eq:red5}
  \{B_\al\}_{\al \in \De} = \{\si(U_{v_\al})\}_{\al \in \De}.  
\end{equation}

The following lemma shows that the maximal degree of the basic invariants does not increase by 
passing to a slice representation. This was shown in \cite[Lemma 2.4]{KLMR06}; for convenience of the reader we 
include a short proof. 

\begin{lemma} \label{lem:e}
  Assume that $\{\ta_i\}_{i=1}^m$ is minimal and set $e := \max_{i} e_i = \max_{i} \deg \ta_i$. Then $e \le d$.
\end{lemma}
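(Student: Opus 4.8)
The plan is to compare the two representations $G \acts V$ and $G_v \acts N_v$ via the slice theorem and extract from it a relation between the degrees of their basic invariants. The key object is the map $\et = \ph \o \al : N_v \to V$, $x \mapsto g\mapsto$ nothing, rather $x \mapsto v+x$ (composing $\al : x \mapsto [e,x]$ with $\ph : [g,x]\mapsto g(v+x)$, so $\et(x) = v+x$), which is the inclusion of the slice $S_v = v + B_v$ into $V$ shifted to the origin. By \eqref{eq:pi}, pulling back the basic invariant $\si_i$ along $x \mapsto v+x$ gives $\si_i(v+x) = \si_i(v) + \pi_i(\ta(x))$ for $x \in B_v$, i.e.\ $\pi_i \o \ta$ is the restriction to a neighborhood of $0$ in $N_v$ of the polynomial $\si_i(v+\cdot) - \si_i(v)$, which has degree $\le d_i \le d$ in $x$.

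First I would observe that since $\pi_i \o \ta$ agrees with a polynomial of degree $\le d$ on an open set, it \emph{is} a polynomial of degree $\le d$ (two polynomials agreeing on an open subset of $N_v$ coincide). So the subalgebra $\R[\pi_1\o\ta,\ldots,\pi_n\o\ta] \subseteq \R[N_v]^{G_v}$ is generated by elements of degree $\le d$. Next I would argue that these $\pi_i\o\ta$ actually generate all of $\R[N_v]^{G_v}$: this is exactly the content of \eqref{eq:vh}/the cited result \cite[p.~67]{Schwarz75} together with \eqref{eq:pi}, since every $G_v$-invariant (real analytic, hence in particular every polynomial) germ at $0$ in $N_v$ is a function of $\si(v+\cdot)-\si(v) = (\pi_i\o\ta)_i$; but one has to be a bit careful because \eqref{eq:vh} only gives this for analytic functions near $0$, not globally for polynomials. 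The cleaner route, which is what I would actually carry out, is purely local: work in the completed local ring or just with Taylor expansions at $0$. Since $\ta_j$ has no constant or linear term is false in general — but we may assume $\ta_1 = \|x\|^2$ — in any case, the graded pieces are what matter.

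Here is the argument I would write. Let $\mathfrak{m}$ be the maximal ideal of $0$ in $\R[N_v]^{G_v}$, i.e.\ the invariants vanishing at $0$; minimality of $\{\ta_j\}$ means the $\ta_j$ induce a basis of $\mathfrak{m}/\mathfrak{m}^2$ (as in the definition of minimal in the excerpt, applied to $G_v \acts N_v$ — note $N_v^{G_v}$ need not be zero, but the $e_j$ are still well-defined as the degrees). Now consider the invariants $f_i := \si_i(v+\cdot) - \si_i(v) \in \R[N_v]^{G_v}$, each of degree $\le d$. By \eqref{eq:vh} every analytic $G_v$-invariant germ at $0$ is an analytic function of $(f_1,\ldots,f_n)$; in particular each $\ta_j$, as a polynomial hence analytic germ, satisfies $\ta_j = \Phi_j(f_1,\ldots,f_n)$ for some analytic $\Phi_j$ near $0$ with $\Phi_j(0) = 0$. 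Passing to the linear parts modulo $\mathfrak{m}^2$, we get that the images of $f_1,\ldots,f_n$ span $\mathfrak{m}/\mathfrak{m}^2$. Therefore among the $f_i$ we can select a subset forming a basis of $\mathfrak{m}/\mathfrak{m}^2$; by Nakayama (graded version) these selected $f_i$ generate $\R[N_v]^{G_v}$ as an algebra — wait, one must check they generate, not just generate the maximal ideal, but since the algebra is generated by $\mathfrak{m}$ over $\R$ and these generate $\mathfrak{m}$ as an ideal hence (being homogeneous-ish) as... Actually the cleanest statement: any homogeneous system of invariants whose images span $\mathfrak{m}/\mathfrak{m}^2$ generates $\R[N_v]^{G_v}$; here the $f_i$ aren't homogeneous, so I'd take their lowest-degree homogeneous parts $\tilde f_i$ (each of degree $\le \deg f_i \le d$), which are still invariant, and whose images in $\mathfrak{m}/\mathfrak{m}^2$ also span (the linear part of $f_i$ equals the linear part of $\tilde f_i$ unless $f_i$ starts in degree $\ge 2$, in which case its image in $\mathfrak{m}/\mathfrak{m}^2$ is zero and it contributes nothing anyway). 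So we get a homogeneous generating set $\{\tilde f_{i}\}_{i \in S}$ with all degrees $\le d$.

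Finally, since $\{\ta_j\}$ is a \emph{minimal} generating system, its number $m$ and degrees $e_1,\ldots,e_m$ are uniquely determined (as stated in the excerpt for any minimal system), and from any generating set one can extract a minimal one whose degrees are dominated: more precisely, if $\R[N_v]^{G_v}$ is generated by homogeneous elements of degrees $\le d$, then in $\mathfrak{m}/\mathfrak{m}^2$ — which is graded — every graded piece in degree $> d$ is zero, hence every element of a minimal system has degree $\le d$, giving $e = \max_j e_j \le d$. The main obstacle, and the one step deserving care, is the passage from the analytic identity \eqref{eq:vh} (valid only near $0$) to the algebraic conclusion about $\mathfrak{m}/\mathfrak{m}^2$; I would handle it exactly as above by only extracting the $1$-jet at $0$, for which analytic and formal information coincide, so no convergence issue arises. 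Everything else is routine commutative algebra of graded rings and Nakayama's lemma.
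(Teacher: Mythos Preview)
Your overall strategy --- showing that the $G_v$-invariants $f_i(x) = \si_i(v+x) - \si_i(v)$, each of degree $\le d$, have images spanning $\mathfrak{m}/\mathfrak{m}^2$ via \eqref{eq:vh}, and concluding that $(\mathfrak{m}/\mathfrak{m}^2)_{>d} = 0$ --- is correct and close in spirit to the paper's proof. The paper argues by contradiction: if $e_m > d$, then each $\pi_i$ can be taken independent of its last variable (each homogeneous component of $f_i$ has degree $< e_m$, hence is a polynomial in $\ta_1,\ldots,\ta_{m-1}$), so \eqref{eq:vh} gives $\ta_m = \vh_m(\pi(\ta'))$ with $\ta' = (\ta_1,\ldots,\ta_{m-1})$; truncating the Taylor series of $\vh_m \o \pi$ at order $e$ yields a polynomial relation $\ta_m = P(\ta')$, contradicting minimality. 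Your $\mathfrak{m}/\mathfrak{m}^2$ formulation is a clean repackaging of the same mechanism (Taylor-truncate \eqref{eq:vh} to pass from analytic to algebraic).

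There is, however, a genuine error in your passage through ``lowest-degree homogeneous parts $\tilde f_i$''. Your parenthetical (``unless $f_i$ starts in degree $\ge 2$, in which case its image in $\mathfrak{m}/\mathfrak{m}^2$ is zero'') conflates $\mathfrak{m}/\mathfrak{m}^2$, the quotient by \emph{products of invariants}, with the space of degree-$1$ polynomials in $x$; in general the $\ta_j$ have degree $\ge 2$ and sit nontrivially in $\mathfrak{m}/\mathfrak{m}^2$. Concretely, if $\mathfrak{m}/\mathfrak{m}^2$ has basis $\bar\ta_1$ in degree $2$ and $\bar\ta_2$ in degree $3$, and $f_1 = \ta_1 + \ta_2$, $f_2 = \ta_1 - \ta_2$, then $\bar f_1, \bar f_2$ span but $\tilde f_1 = \tilde f_2 = \ta_1$ do not. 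Fortunately the step is unnecessary: once the $\bar f_i$ span and each is supported in graded degrees $\le d$ (because $\deg f_i \le d$ and $\mathfrak{m}^2$ is a homogeneous ideal), projecting to degree $k > d$ gives $(\mathfrak{m}/\mathfrak{m}^2)_k = 0$ directly, whence $e_j \le d$ for all $j$. If you prefer homogeneous generators, replace each $f_i$ by \emph{all} of its homogeneous components $f_{i,k}$, $k \le d$, rather than just the lowest one.
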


\begin{proof}
  We may assume without loss of generality that the basic invariants $\ta_i$ are ordered so that 
  $e_1\le e_2\le \cdots \le e_m = e$. 
  Assume that $e_m > d$. We will show that this assumption contradicts minimality of 
  $\{\ta_i\}_{i=1}^m$. 
  It fact, in view of \eqref{eq:pi} it implies that each polynomial $\pi_i$ is independent of its last entry. 
  Thus, by \eqref{eq:pi} and \eqref{eq:vh}, we have for $y \in U_v-v$,
  \[
    \ta_m(y) = \ps_m(\ta'(y)),  
  \]
  where $\ta':= (\ta_1,\ldots,\ta_{m-1})$ and $\ps_m := \vh_m \o \pi$. Expanding into Taylor series at $0$,  
  \[
    \ta_m = T^\infty_0 \ps_m \o \ta' = T^{e}_0 \ps_m \o \ta', 
  \]
  we see that $\ta_m$ is a polynomial in $\ta_1,\ldots,\ta_{m-1}$ (in a neighborhood of $0$ and hence everywhere in $N_v$). 
  This contradicts minimality of $\{\ta_i\}_{i=1}^m$.
\end{proof}

\section{Two interpolation inequalities}

We recall two classical interpolation inequalities.
The first is a version of Glaeser's inequality (cf.\ \cite{Glaeser63R}).

\begin{lemma} \label{Glaeser}
  Let $I\subseteq \R$ be an open interval and   
  let $f \in C^{1,1}(\overline I)$ be nonnegative. 
  For any $t_0 \in I$ and $M>0$ such that $I_{t_0}(M^{-1}) := \{t : |t-t_0|< M^{-1}|f(t_0)|^{\frac 1 2}\} \subseteq I$
  and $M^2 \ge \Lip_{I_{t_0}(M^{-1})}(f')$ 
  we have 
  \[
    |f'(t_0)| \le \big(M + M^{-1} \on{Lip}_{I_{t_0}(M^{-1})}(f')\big) |f(t_0)|^{\frac{1}{2}} \le 2M |f(t_0)|^{\frac{1}{2}}. 
  \]
\end{lemma}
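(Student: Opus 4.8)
The plan is to prove Glaeser's inequality directly via Taylor's theorem with the integral (or Lagrange) form of the remainder, exploiting nonnegativity of $f$. Fix $t_0 \in I$ and $M > 0$ as in the hypothesis, abbreviate $J := I_{t_0}(M^{-1})$, $L := \on{Lip}_J(f')$, and $\delta := M^{-1}|f(t_0)|^{\frac 12}$, so that $J = (t_0 - \delta, t_0 + \delta) \subseteq I$ and $M^2 \ge L$. If $f(t_0) = 0$ then $t_0$ is a minimum of the nonnegative $C^1$ function $f$, hence $f'(t_0) = 0$ and the inequality is trivial; so assume $f(t_0) > 0$ and $\delta > 0$.

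The key step is the following: for any $t \in J$, Taylor expansion at $t_0$ together with the Lipschitz bound on $f'$ gives
\[
  0 \le f(t) = f(t_0) + f'(t_0)(t-t_0) + R(t), \qquad |R(t)| \le \tfrac{L}{2}(t-t_0)^2 .
\]
Hence $f(t_0) + f'(t_0)(t-t_0) \ge -\tfrac{L}{2}(t-t_0)^2$ for all $|t - t_0| < \delta$. I would now choose $t$ approaching the endpoint of $J$ on the side opposite to the sign of $f'(t_0)$: writing $t - t_0 = -s\,\operatorname{sgn}(f'(t_0))$ with $s \uparrow \delta$, this yields $f(t_0) - |f'(t_0)| s \ge -\tfrac L2 s^2$, i.e. $|f'(t_0)| s \le f(t_0) + \tfrac L2 s^2$ for all $0 < s < \delta$. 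Letting $s \to \delta$ gives $|f'(t_0)|\,\delta \le f(t_0) + \tfrac L2 \delta^2$. Substituting $\delta = M^{-1}|f(t_0)|^{\frac12}$ and $f(t_0) = |f(t_0)|$, and dividing by $\delta$:
\[
  |f'(t_0)| \le \frac{|f(t_0)|}{\delta} + \frac{L}{2}\delta
  = M |f(t_0)|^{\frac12} + \frac{L}{2M}|f(t_0)|^{\frac12}
  = \Big(M + \frac{L}{2M}\Big)|f(t_0)|^{\frac12}.
\]
This is even slightly sharper than the stated bound $\big(M + M^{-1}L\big)|f(t_0)|^{\frac12}$, which is therefore immediate; and since $M^2 \ge L$ we get $M^{-1}L \le M$, so the whole quantity is $\le 2M|f(t_0)|^{\frac12}$, giving the second inequality.

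One subtlety to handle carefully: $J$ is open, so $t = t_0 \pm \delta$ is not itself in $J$, and Rademacher/a.e.\ differentiability is not needed here — $f \in C^{1,1}(\overline I)$ means $f'$ is genuinely Lipschitz on $\overline I \supseteq \overline J$, so the Taylor estimate with remainder $\tfrac L2 (t-t_0)^2$ holds for every $t \in \overline J$ by integrating $f'(t) - f'(t_0)$, and one may simply take $t = t_0 \pm \delta$ directly rather than passing to a limit. I do not anticipate a genuine obstacle; the only thing to get right is the bookkeeping of signs (picking the endpoint of $J$ that makes $f'(t_0)(t-t_0)$ negative) and verifying that the hypotheses $J \subseteq I$ and $M^2 \ge L$ are exactly what licenses the two successive inequalities in the conclusion.
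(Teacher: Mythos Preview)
Your proof is correct and follows essentially the same approach as the paper: the paper also disposes of the case $f(t_0)=0$ first, then applies Taylor's formula with remainder at $t_0$ and plugs in $h = \pm M^{-1}|f(t_0)|^{1/2}$ to exploit nonnegativity. Your version is slightly more explicit about the sign choice and the endpoint issue, and you correctly observe that the argument actually yields the sharper constant $M + \tfrac{L}{2M}$; the paper simply records the weaker $M + M^{-1}L$ since only the bound $2M$ is used downstream.
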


\begin{proof}
  The inequality holds true at zeros of $f$. Let us assume that $f(t_0)>0$. 
  The statement follows from  
  \[
   0 \le f(t_0 +h) = f(t_0) + f'(t_0) h + \int_0^1 (1-s) f''(t_0+ h s)\, ds\, h^2
  \]
  with  
  $h = \pm M^{-1}|f(t_0)|^{\frac{1}{2}}$.
\end{proof}

\begin{lemma} \label{taylor} 
Let $f\in C^{m-1,1}(\bar I)$.  There is a universal constant $C=C(m)$ such that for all $t\in I$ and $k = 1,\ldots,m$,  
    \begin{align}\label{eq:1}  
    |f^{(k)}(t) | \le C |I|^{-k} \bigl(\| f \|_{L^\infty(I)}+  \on{Lip}_{I}(f^{(m-1)})  |I|^m \bigr).  
  \end{align}
\end{lemma}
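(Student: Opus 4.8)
The plan is to reduce to the unit interval by an affine substitution, then compare $f$ with its Lagrange interpolation polynomial at $m$ fixed nodes. Write $\overline I = [a,b]$ and $|I| = b-a$, and set $g(s) := f(a+|I|s)$ for $s \in [0,1]$. Then $g \in C^{m-1,1}([0,1])$, $g^{(k)}(s) = |I|^k f^{(k)}(a+|I|s)$, $\|g\|_{L^\infty([0,1])} = \|f\|_{L^\infty(I)}$, and $\on{Lip}_{[0,1]}(g^{(m-1)}) = |I|^{m}\,\on{Lip}_I(f^{(m-1)})$. Substituting these relations into \eqref{eq:1} shows that it is equivalent to the estimate $|g^{(k)}(s)| \le C(m)\,\bigl(\|g\|_{L^\infty([0,1])} + \on{Lip}_{[0,1]}(g^{(m-1)})\bigr)$ for $s \in [0,1]$ and $k = 1,\ldots,m$. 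So from now on I would assume $\overline I = [0,1]$.

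Next I would fix $m$ distinct nodes $0 \le x_1 < \cdots < x_m \le 1$ depending only on $m$, with associated Lagrange basis polynomials $\ell_j$ (so $\deg \ell_j \le m-1$ and $\ell_j(x_i) = \delta_{ij}$), and let $P := \sum_{j=1}^m f(x_j)\,\ell_j$ be the interpolation polynomial of $f$ at these nodes. Since $\deg P \le m-1$ we have $P^{(m)} \equiv 0$, and for $1 \le k \le m-1$ and $t \in [0,1]$,
\[
  |P^{(k)}(t)| \le \sum_{j=1}^m |f(x_j)|\,|\ell_j^{(k)}(t)| \le C(m)\,\|f\|_{L^\infty([0,1])},
\]
because the $\ell_j$ depend only on $m$.

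Now set $g := f - P$. Then $g(x_j) = 0$ for all $j$, and, since $P^{(m-1)}$ is a constant, $\on{Lip}(g^{(m-1)}) = \on{Lip}(f^{(m-1)})$. By $m-1-k$ iterations of Rolle's theorem (legitimate for $k \le m-2$ since $g \in C^{m-1}$), $g^{(k)}$ has at least $m-k$ zeros in $[0,1]$ for $k = 0,\ldots,m-1$; in particular each $g^{(k)}$ has a zero $\xi_k \in [0,1]$. A descending induction on $k$ then yields $\|g^{(k)}\|_{L^\infty([0,1])} \le \on{Lip}(f^{(m-1)})$ for $k = 0,\ldots,m-1$: for $k = m-1$ this is $|g^{(m-1)}(t)| = |g^{(m-1)}(t) - g^{(m-1)}(\xi_{m-1})| \le \on{Lip}(f^{(m-1)})|t-\xi_{m-1}| \le \on{Lip}(f^{(m-1)})$, and for $k < m-1$ one integrates $g^{(k+1)}$ from $\xi_k$ to $t$ and uses $|t-\xi_k| \le 1$. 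Combining the two estimates, for $1 \le k \le m-1$ and $t \in [0,1]$,
\[
  |f^{(k)}(t)| \le |P^{(k)}(t)| + |g^{(k)}(t)| \le C(m)\bigl(\|f\|_{L^\infty([0,1])} + \on{Lip}_{[0,1]}(f^{(m-1)})\bigr);
\]
for $k = m$, $f^{(m)}$ exists almost everywhere by Rademacher's theorem and satisfies $|f^{(m)}(t)| \le \on{Lip}_{[0,1]}(f^{(m-1)})$ wherever it exists. Undoing the affine substitution gives \eqref{eq:1}.

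This inequality is classical, so no deep obstacle is expected; the only points needing a little care are that the repeated application of Rolle's theorem is valid under the mere $C^{m-1,1}$-hypothesis — one never differentiates the only-Lipschitz function $g^{(m-1)}$, and $g^{(m-2)} \in C^1$, so the final Rolle step goes through — and that for $k = m$ the derivative $f^{(m)}$ exists only almost everywhere, so the statement must be read in that sense, consistently with the earlier use of Rademacher's theorem in the paper.
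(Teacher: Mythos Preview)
Your proof is correct, though it takes a different route from the paper's.  The paper argues pointwise: at each $t\in I$ it writes Taylor's formula with integral remainder on a subinterval of length at least $|I|/2$, so the Taylor polynomial $\sum_{k=0}^{m-1}\frac{f^{(k)}(t)}{k!}(t_1-t)^k$ is bounded in sup norm by $\|f\|_{L^\infty(I)}+\on{Lip}_I(f^{(m-1)})|I|^m$, and then invokes the auxiliary Proposition~\ref{prop:interpol} (itself proved by Lagrange interpolation) to bound the individual coefficients.  You instead interpolate $f$ globally by a polynomial $P$ of degree $\le m-1$, control $P^{(k)}$ directly from the fixed basis, and handle the remainder $g=f-P$ via Rolle's theorem and integration.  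Both arguments are elementary and self-contained; yours avoids the separate coefficient-estimate proposition, while the paper's is perhaps more transparent about where the scale $|I|$ enters.

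One small slip: to pass from $g$ having $m$ zeros to $g^{(k)}$ having $m-k$ zeros requires $k$ applications of Rolle, not $m-1-k$; the range of validity is still $0\le k\le m-1$ since $g\in C^{m-1}$ makes each $g^{(k-1)}\in C^1$ for $k\le m-1$.  This does not affect the argument.
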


\begin{proof}
We may suppose $I=(-\delta, \delta)$. If $t \in I$ then at least one of the two intervals
 $[t,t\pm \delta )$, say $[t,t+ \delta )$, is included in $I$.  By Taylor's formula, for $t_1\in [t,t +\delta )$, 
    \begin{align*}
      \Big|\sum_{k=0}^{m-1}  \frac{{f}^{(k)}(t)}{k!} (t_1-t)^k\Big| 
       & \le |f(t_1)| +  \int_0^1 \frac{(1-s)^{m-1}}{(m-1)!} |f^{(m)}(t + s(t_1-t))|\, ds\, (t_1-t)^m  \\
      & \le \| f \|_{L^\infty(I)}+  \on{Lip}_{I}(f^{(m-1)})  \delta^m ,        
    \end{align*}
   and for $k \le m-1$  we may conclude by Proposition~\ref{prop:interpol} below.   
    For $k=m$,  \eqref{eq:1} is trivially satisfied.  
\end{proof}

\begin{proposition} \label{prop:interpol}
  Let $P(x) = a_0 + a_1 x + \cdots + a_m x^m \in \C[x]$ satisfy 
  $|P(x)| \le A$ for $x \in [0,B] \subseteq \R$. Then, for $j=0,\ldots,m$,
  \[
    |a_j| \le (2m)^{m+1} A B^{-j}.
  \]
\end{proposition}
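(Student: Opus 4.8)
The plan is to recover the coefficients $a_j$ from values of $P$ by exact Lagrange interpolation at $m+1$ equally spaced nodes of $[0,B]$. First I would fix the nodes $x_k := Bk/m$ for $k = 0,\dots,m$ (assume $B>0$; the case $B=0$ is vacuous) and use that, since $\deg P \le m$, the interpolation formula $P = \sum_{k=0}^m P(x_k)\,\ell_k$ is exact, where $\ell_k(x) := \prod_{i\ne k}\frac{x-x_i}{x_k-x_i}$. Comparing coefficients of $x^j$ gives $a_j = \sum_{k=0}^m P(x_k)\,\ell_{k,j}$, where $\ell_{k,j}$ denotes the coefficient of $x^j$ in $\ell_k$, hence $|a_j| \le A\sum_{k=0}^m |\ell_{k,j}|$. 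The whole problem is thereby reduced to estimating $\sum_k |\ell_{k,j}|$.

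Next I would compute $\ell_{k,j}$ in closed form. Writing $e_s$ for the $s$-th elementary symmetric function, the numerator $\prod_{i\ne k}(x-x_i)$ contributes $\pm\, e_{m-j}(\{x_i : i\ne k\})$ as its coefficient of $x^j$, while the denominator is $\prod_{i\ne k}(x_k-x_i) = (B/m)^m(-1)^{m-k}k!\,(m-k)!$. Since $x_i = Bi/m$, the powers of $B/m$ collapse to a single factor $B^{-j}$ and the nodes reduce to the integers $0,1,\dots,m$, so that $|\ell_{k,j}| = \frac{m^j}{B^j}\cdot\frac{e_{m-j}(\{i : 0\le i\le m,\ i\ne k\})}{k!\,(m-k)!}$.

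Then I would bound the symmetric functions crudely. Because the $e_s$ are monotone under adjoining nonnegative numbers and unchanged under adjoining $0$, one has, for $s = m-j \le m$,
\[
  e_{m-j}\big(\{i : 0\le i\le m,\ i\ne k\}\big) \le e_s\big(\{1,\dots,m\}\big) \le \sum_{s'=0}^m e_{s'}\big(\{1,\dots,m\}\big) = \prod_{i=1}^m (1+i) = (m+1)!.
\]
Feeding this in and using $\sum_{k=0}^m \frac{1}{k!\,(m-k)!} = \frac{2^m}{m!}$ collapses the sum to $|a_j| \le A\,(m+1)\,m^j\,2^m\,B^{-j}$; finally $(m+1)m^j \le (m+1)m^m \le 2m^{m+1}$ for $j\le m$ yields the asserted bound $|a_j| \le (2m)^{m+1}A\,B^{-j}$.

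The step needing care is calibrating the crudeness of the symmetric-function estimate. A naive bound such as $|\ell_{k,j}| \le \sum_{j'} |\ell_{k,j'}| \le 2^m$ leads only to an estimate of the shape $(Ce)^m A\,B^{-j}$, which fails to stay below $(2m)^{m+1}$ for small $m$. The trick is to \emph{not} split off each coefficient separately but to bound $e_{m-j}$ by the total sum $(m+1)!$ and then let the $1/m!$ arising from the interpolation denominators absorb it; this is exactly what makes the constant come out as $(2m)^{m+1}$ uniformly in $m$ and $j$.
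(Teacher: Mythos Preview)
Your argument is correct and is essentially the same as the paper's: Lagrange interpolation at the $m+1$ equidistant nodes of $[0,B]$, followed by a crude bound on the elementary symmetric functions of the nodes. The only cosmetic differences are that the paper first normalizes to $A=B=1$ and then leaves the final estimate implicit (``the statement follows''), whereas you keep $B$ general and carry the computation through explicitly via $e_{m-j}(\{1,\dots,m\})\le(m+1)!$; both routes lead to the same $(2m)^{m+1}$ constant.
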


\begin{proof}
  We show the lemma for $A=B=1$.
  The general statement follows by applying this special case to the polynomial $A^{-1} P(By)$, $y=B^{-1}x$.
  Let $0 = x_0 < x_1 < \cdots < x_m=1$ be equidistant points. By Lagrange's interpolation formula 
  (e.g.\ \cite[(1.2.5)]{RS02}), 
  \[
    P(x) = \sum_{k=0}^m P(x_k) \prod_{\substack{j=0\\ j\ne k}}^m \frac{x-x_j}{x_k-x_j},
  \]
  and therefore
  \[
    a_j = \sum_{k=0}^m P(x_k) \prod_{\substack{j=0\\ j\ne k}}^m (x_k-x_j)^{-1} (-1)^{m-j} \si^k_{m-j},
  \]
  where $\si^k_j$ is the $j$th elementary symmetric polynomial in $(x_\ell)_{\ell \ne k}$.
  The statement follows. 
\end{proof}

A better constant can be obtained using Chebyshev polynomials; cf.\ \cite[Theorems~16.3.1-2]{RS02}.

\section{Proof of Theorem~\ref{thm:main}} \label{sec:proof}
 
Let $(G\acts V,d,\si)$ satisfy \eqref{eq:red1} and \eqref{eq:red2}, and  
let $c \in C^{d-1,1}(I,\si(V))$.

\subsection{Reduction to \texorpdfstring{$G \acts (V\setminus \{0\})$}{G:V-0}}

By \eqref{eq:red1} we have $c_1\ge 0$ and $c_1(t) = 0$ if and only if $c(t) = 0$.
We shall show the following statement.

\begin{claim} \label{claim:a}
  For any relatively compact open subinterval $I_0 \Subset I$ and any 
  $t_0 \in I_0 \setminus {c_1}^{-1}(0)$, 
  there exists a Lipschitz lift $\overline c_{t_0}$ of $c$ on a neighborhood $I_{t_0}$ of $t_0$ in 
  $I_0 \setminus {c_1}^{-1}(0)$ so that 
  \[
    \on{Lip}_{I_{t_0}}(\overline c_{t_0}) \le C\, \big(\max_i \|c_i\|^{\frac{1}{d_i}}_{C^{d-1,1}(\overline I_1)}\big), 
  \]
  where $I_1$ is any open interval 
  satisfying $I_0 \Subset I_1 \Subset I$ and $C=C(G\acts V,I_0,I_1)$.
\end{claim}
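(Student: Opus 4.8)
The plan is to fix $t_0 \in I_0 \setminus {c_1}^{-1}(0)$, so that $c_1(t_0)>0$, and exploit the local triviality of the lifting problem coming from the slice theorem as packaged in Section~2. Recall that by homogeneity and \eqref{eq:red1} we have $c_1 \ge 0$ with $c_1(t)=0$ iff $c(t)=0$, and on a neighborhood of $t_0$ the value $c_1$ stays positive. First I would pass from $c$ to the normalized curve $\underline c = (1, {c_1}^{-d_2/d_1} c_2, \ldots, {c_1}^{-d_n/d_1} c_n)$, which lies in the compact slice $\si(V) \cap \{y_1 = 1\}$; hence by the finite subcover \eqref{eq:red5} there is some $\al \in \De$ with $\underline c(t_0) \in B_\al = \si(U_{v_\al})$, and by continuity $\underline c(t)$ stays in $B_\al$ for $t$ in a small neighborhood $I_{t_0}$ of $t_0$. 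Now Lemma~\ref{lem:red} applies with $v = v_\al$: it reduces the task of lifting $c$ near $t_0$ to lifting the curve $c^* = ({c_1}^{e_1/d_1}\underline c^*_1, \ldots, {c_1}^{e_m/d_1}\underline c^*_m)$ over the \emph{slice orbit map} $\ta$, where $\underline c^* = \vh(\underline c - \si(v_\al))$, followed by forming $\overline c = {c_1}^{1/d_1} v_\al + \overline c^*$.

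Next I would control the regularity of $c^*$ in terms of that of $c$. The curve $\underline c - \si(v_\al)$ is built from the $c_i$ by multiplication and by raising $c_1$ to negative fractional powers; since $c_1 \ge \epsilon > 0$ on $\overline{I_{t_0}}$, the function ${c_1}^{-d_i/d_1}$ is again $C^{d-1,1}$ there, so $\underline c \in C^{d-1,1}(\overline{I_{t_0}})$ with norm bounded by $C(\epsilon)\max_i \|c_i\|_{C^{d-1,1}(\overline{I_1})}$ (using $d_i \le d = d_1$... wait — more carefully, by the dominant-invariant estimate \eqref{eq:dom} together with Lemma~\ref{taylor} one gets the right homogeneous bounds). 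Composing with the real analytic map $\vh$, whose derivatives up to order $d$ are bounded by assumption, keeps $\underline c^*$ in $C^{d-1,1}$; multiplying back by the positive $C^{d-1,1}$ functions ${c_1}^{e_i/d_1}$ (here $e_i \le e \le d = d_1$ by Lemma~\ref{lem:e}, so these are genuine nonnegative powers and their $C^{d-1,1}$-norms are controlled) yields $c^* \in C^{d-1,1}(\overline{I_{t_0}}, \ta(N_{v_\al}))$ with the homogeneous bound $\|c^*_i\|_{C^{d-1,1}} \le C\,(\max_j \|c_j\|_{C^{d-1,1}(\overline{I_1})})^{\cdots}$ scaling correctly in the $e_i$'s and $d_j$'s.

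Then I would invoke the \emph{inductive hypothesis}: the slice representation $G_{v_\al} \acts N_{v_\al}$ has maximal degree $e \le d$ (Lemma~\ref{lem:e}), its dimension $\dim N_{v_\al} < \dim V$ unless $v_\al$ is a fixed point — but $V^G = \{0\}$ by \eqref{eq:red2} and $v_\al \ne 0$, so the isotropy is strictly larger and the slice representation is strictly ``smaller'' — hence Theorem~\ref{thm:main} (in the more precise ``for all $I_0 \Subset I_1$'' form of Remark~\ref{rem:main}) is available for $\ta$ by induction. This produces a Lipschitz lift $\overline c^*$ of $c^*$ over $\ta$ on $I_{t_0}$ with $\on{Lip}(\overline c^*) \le C \max_i \|c^*_i\|^{1/e_i}_{C^{e-1,1}}$, which by the previous paragraph is $\le C\, \max_i \|c_i\|^{1/d_i}_{C^{d-1,1}(\overline{I_1})}$. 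Finally, $\overline c := {c_1}^{1/d_1} v_\al + \overline c^*$ is a lift of $c$ by Lemma~\ref{lem:red}, and its Lipschitz constant is bounded by $\|v_\al\|\cdot\on{Lip}({c_1}^{1/d_1}) + \on{Lip}(\overline c^*)$; the first term needs Glaeser's inequality (Lemma~\ref{Glaeser}) applied to the nonnegative $C^{1,1}$ (indeed $C^{d-1,1}$ with $d \ge 2$) function $c_1$, which gives $\on{Lip}({c_1}^{1/d_1})$ — or rather of ${c_1}^{1/2}$ since $d_1=2$ — bounded by $C\,\|c_1\|^{1/2}_{C^{1,1}(\overline{I_1})} \le C\max_i\|c_i\|^{1/d_i}_{C^{d-1,1}(\overline{I_1})}$. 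Assembling these gives the claimed bound with $C = C(G\acts V, I_0, I_1)$.

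\textbf{Main obstacle.} The delicate point is the bookkeeping of the \emph{homogeneous} norm estimates: tracking how $\|c^*_i\|_{C^{d-1,1}}$ depends on $\max_j\|c_j\|_{C^{d-1,1}(\overline{I_1})}$ with exactly the right fractional powers so that, after applying the inductive Lipschitz bound $\on{Lip}(\overline c^*)\lesssim \max_i\|c^*_i\|^{1/e_i}$, everything collapses to the single homogeneous quantity $\max_i\|c_i\|^{1/d_i}_{C^{d-1,1}(\overline{I_1})}$. This requires combining the degree inequality $e_i \le d_j$ in the right way (via $e_1=2=d_1$ being dominant on both sides), Lemma~\ref{taylor} to bound intermediate derivatives, and care that the lower bound $c_1 \ge \epsilon$ on $\overline{I_{t_0}}$ enters only through constants allowed to depend on $I_0, I_1$ and not through the homogeneous scaling. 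A secondary subtlety is making the induction well-founded: one induces on $\dim V$ (or on the partial order of slice representations), with the base case $V = V^G$ being trivial as noted after \eqref{eq:red1}.
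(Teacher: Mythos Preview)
Your skeleton --- reduce to the slice representation via Lemma~\ref{lem:red} and induct --- is the same as the paper's. But there is a genuine gap at exactly the spot you flag as the ``main obstacle'', and your sketch does not close it.

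The difficulty is uniformity in $t_0$. The claim requires the Lipschitz bound to be \emph{independent of $t_0$} (depending only on $G\acts V$, $I_0$, $I_1$); in particular it must not degenerate as $t_0$ approaches a zero of $c_1$. In your argument the passage from $c$ to $\underline c$ introduces negative powers of $c_1$: one finds $|\p_t^k \underline c_i| \lesssim |c_1|^{-k/2}$ and hence $|(c^*_i)^{(k)}| \lesssim |c_1|^{(e_i-k)/2}$ on $\overline{I_{t_0}}$. For $k > e_i$ this blows up as $c_1(t_0)\to 0$, so $\|c^*_i\|_{C^{e-1,1}(\overline{I_{t_0}})}$ is \emph{not} bounded by $C\,\max_j \|c_j\|^{\ldots}_{C^{d-1,1}(\overline I_1)}$ with $C$ independent of $t_0$. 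You cannot absorb this into ``constants allowed to depend on $I_0,I_1$'' because the dependence is through $c_1(t_0)$, not through the intervals. Relatedly, invoking Remark~\ref{rem:main} inductively on nested intervals $J_0\Subset J_1 \Subset I_{t_0}$ brings in a constant $C(H\acts W, J_0, J_1)$ that grows like the reciprocal of $|J_1|\sim |c_1(t_0)|^{1/2}$ (cf.\ the $\de^{-1}$ in \eqref{finalbounds}). One would have to verify that these opposing powers of $c_1(t_0)$ cancel exactly through all levels of the induction, and your sketch does not do this.

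The paper resolves this by \emph{not} inducting on the raw $C^{d-1,1}$ norm. Instead it introduces scale-adapted local assumptions \eqref{A.2}--\eqref{A.4}: on the interval $I_{t_0}(A^{-1})$ of length proportional to $|c_1(t_0)|^{1/2}$ one postulates $|c_i^{(k)}(t)| \le C\,A^k\,|c_1(t)|^{(d_i-k)/d_1}$ with a single constant $A$ independent of $t_0$. Claim~\ref{claim:c} shows these follow from $c\in C^{d-1,1}$ with $A\le C\max_i\|c_i\|^{1/d_i}_{C^{d-1,1}(\overline I_1)}$; Claim~\ref{claim:b} shows they pass to the reduced curve $c^*$ with $A$ replaced by $B=C\,A$ (the scaling is built in, so no $c_1(t_0)$ survives), and then yields a lift with Lipschitz constant $\le C\,A$. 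A secondary point: the induction is on the \emph{size of $G$} (dimension, then number of components), not on $\dim V$ --- the latter need not decrease (for finite $G$ the slice $N_v$ has full dimension). The correct observation is that $V^G=\{0\}$ and $v\ne 0$ force $G_v<G$ in this ordering; your phrase ``the isotropy is strictly larger'' has it backwards.
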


Claim~\ref{claim:a} will imply Theorem~\ref{thm:main} by the following lemma. 

\begin{lemma} \label{lem:ext} 
  Suppose that for each $t_0 \in I_0 \setminus {c_1}^{-1}(0)$ there exists a Lipschitz lift $\overline c_{t_0}$ of $c$
  on a neighborhood $I_{t_0}$ of $t_0$ in $I_0 \setminus {c_1}^{-1}(0)$ so that 
  $L := \sup_{t_0 \in I_0 \setminus {c_1}^{-1}(0)} \on{Lip}_{I_{t_0}}(\overline c_{t_0}) < \infty$. 
  Then there exists a Lipschitz lift $\overline c$ of $c$
  on $I_0$ and $\on{Lip}_{I_0}(\overline c) \le L$.
\end{lemma}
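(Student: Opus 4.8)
The plan is to patch the local Lipschitz lifts together on the open set $U := I_0 \setminus {c_1}^{-1}(0)$, obtaining a single Lipschitz lift $\overline c$ there with $\on{Lip}_U(\overline c) \le L$, and then to extend $\overline c$ continuously by $0$ across the (closed) zero set ${c_1}^{-1}(0)$, checking that the global Lipschitz estimate survives. First I would observe that $U$ is a countable disjoint union of open subintervals $J_k$ of $I_0$, so it suffices to produce a Lipschitz lift on each $J_k$ with Lipschitz constant $\le L$ and then take their union (a Lipschitz function on a disjoint union of intervals, each piece with constant $\le L$, has no interference between the pieces, so the constant of the union is $\le L$ when measured on each component; the genuinely global estimate comes only at the end, through the zero set). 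On a fixed interval $J = J_k$, the local lifts $\overline c_{t_0}$ cover $J$; along any compact subinterval $[s,t] \subseteq J$ finitely many of them suffice, and on overlaps two such lifts differ by an element of the compact group $G$ acting continuously, hence locally constant on the connected overlap. The standard orbit-gluing argument (the same bookkeeping as in the proof of Corollary~\ref{cor:fin}: choose $s = t_0 < t_1 < \cdots < t_\ell = t$ and group elements $g_i$ with $\overline c_{t_0}$ and $g_i$-translates agreeing on $[t_{i-1},t_i]$) then shows $\|\overline c(s)-\overline c(t)\| \le L\,|t-s|$ once the pieces are correctly rotated into alignment; in particular each local lift can be replaced by a $G$-translate so that they genuinely agree on overlaps and define a single lift $\overline c$ on $J$ with $\on{Lip}_J(\overline c)\le L$.

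Having a lift $\overline c$ on $U$ with $\on{Lip}$ bounded by $L$ on each component, I would extend it to all of $I_0$ by setting $\overline c(t) = 0$ for $t \in {c_1}^{-1}(0)$; this is forced, since $\si_1 = \|\cdot\|^2$ by \eqref{eq:red1} gives $c_1(t) = \|\overline c(t)\|^2$, so a lift must vanish exactly where $c_1$ does. The remaining task is the single global Lipschitz estimate $\|\overline c(s)-\overline c(t)\| \le C\,|s-t|$ for arbitrary $s,t \in I_0$, where $C$ is a constant of the shape asserted (and in the abstract statement of the lemma, $C = L$). The one-sided case, where exactly one of $s,t$ lies in the zero set, is handled by a limiting argument: if $c_1(t_*) = 0$ and $s \in U$, take a point $t' \in U$ on the segment between $s$ and $t_*$ arbitrarily close to $t_*$; then $\|\overline c(t')\|^2 = c_1(t') \to 0$, so $\overline c(t') \to 0 = \overline c(t_*)$, while $\|\overline c(s) - \overline c(t')\| \le L\,|s-t'|$ provided $s$ and $t'$ lie in the same component of $U$ — which they do, since the segment from $s$ to $t'$ misses ${c_1}^{-1}(0)$. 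Letting $t' \to t_*$ gives $\|\overline c(s) - \overline c(t_*)\| \le L\,|s - t_*|$. The two-sided case, $s, t$ both in the zero set, then follows: insert an intermediate point, or simply note that if the open segment $(s,t)$ meets $U$ one reduces to two one-sided estimates, and if $(s,t) \subseteq {c_1}^{-1}(0)$ then $\overline c \equiv 0$ there and the estimate is trivial. Continuity of $\overline c$ at the zero set is subsumed in these estimates.

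The main obstacle, I expect, is the gluing step on a single component $J$ of $U$: one must be careful that the local lifts can be simultaneously $G$-adjusted to agree on overlaps (not merely that consecutive ones can be matched along a chosen chain), i.e.\ that the cocycle of transition elements in $G$ is trivial on the interval $J$. This is true because $J$ is an interval, hence simply connected, and the transitions are locally constant with values in $G$; but it deserves to be spelled out rather than asserted. Everything else — the decomposition of $U$ into intervals, the telescoping estimate, the one-sided limiting argument at the zero set — is routine, and the bound $\on{Lip}_{I_0}(\overline c) \le L$ comes out with no loss because at the zero set the comparison point $\overline c(t')$ converges to $\overline c(t_*)$ along the very same component that already carries the constant $L$.
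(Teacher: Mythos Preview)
Your overall plan matches the paper's: glue on each connected component of $U = I_0 \setminus c_1^{-1}(0)$, extend by $0$, and recover the global Lipschitz bound by the one-sided limiting argument through the zero set. The extension step and the estimates across ${c_1}^{-1}(0)$ are fine and in fact more carefully spelled out than in the paper.

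There is, however, a genuine gap in the gluing step on a component $J$. Your claim that two local lifts ``differ by an element of $G$ \ldots\ hence locally constant on the connected overlap'' is false when $\dim G>0$: take $G=S^1$ acting on $\C$ with $\si_1=|z|^2$ and $c_1\equiv 1$; then $\overline c_1(t)=1$ and $\overline c_2(t)=e^{it}$ are both lifts, differing by $g(t)=e^{it}$, certainly not locally constant. Consequently your \v Cech-cocycle resolution does not apply as stated, and the appeal to the bookkeeping of Corollary~\ref{cor:fin} is misleading since that corollary is for \emph{finite} $G$. The fix is the simpler route you explicitly dismiss (``not merely that consecutive ones can be matched along a chosen chain''): it is exactly what the paper does and it suffices. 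Pick a single point $t_{12}$ in the overlap of two adjacent local lifts, choose $g_{12}\in G$ with $\overline c_1(t_{12})=g_{12}\overline c_2(t_{12})$, and concatenate: $\overline c_{12}:=\overline c_1$ for $t\le t_{12}$ and $\overline c_{12}:=g_{12}\overline c_2$ for $t\ge t_{12}$. Since $G$ acts orthogonally both pieces are Lipschitz with constant $\le L$ and agree at $t_{12}$, so the concatenation is Lipschitz with constant $\le L$; iterating outward along a countable chain of overlapping local intervals covering $J$ yields a single lift $\overline c_J$ on $J$ with $\on{Lip}_J(\overline c_J)\le L$. No simultaneous adjustment on full overlaps is needed, and no cohomological input is required here.
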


\begin{proof}
  Let $J$ be any connected component of $I_0 \setminus {c_1}^{-1}(0)$. 
  If $\overline c_i$, $i=1,2$, are local Lipschitz lifts of $c$ defined on subintervals $(a_i,b_i)$, $i=1,2$, of $J$ with 
  $a_1 < a_2 < b_1 < b_2$ and so that $\on{Lip}_{(a_i,b_i)}(\overline c_i) \le L$, $i=1,2$, then there exists a Lipschitz 
  lift $\overline c_{12}$ of $c$ on $(a_1,b_2)$ satisfying $\on{Lip}_{(a_1,b_2)}(\overline c_{12}) \le L$. 
  To see this choose a point $t_{12} \in (a_2,b_1)$. Since $G \overline c_1(t_{12}) = G \overline c_2(t_{12})$, 
  there exists $g_{12} \in G$ so that $\overline c_1(t_{12}) = g_{12} \overline c_2(t_{12})$. 
  Define $\overline c_{12}(t) := \overline c_1(t)$ for $t\le t_{12}$ and $\overline c_{12}(t) := g_{12} \overline c_2(t)$ for $t\ge t_{12}$. 
  It is easy to see that $c_{12}$ has the required properties (since $G$ acts orthogonally). 

  These arguments imply that there exists a Lipschitz lift $\overline c_J$ of $c$ with  
  $\on{Lip}_{J}(\overline c_J) \le L$ 
  on each connected component $J$ of $I_0 \setminus {c_1}^{-1}(0)$. Defining $\overline c(t):=\overline c_J(t)$ if $t \in J$ and  
  $\overline c(t):=0$ if $t \in {c_1}^{-1}(0)$, we obtain a continuous lift of $c$, since $c_1(t) = \|\overline c(t)\|^2$, by \eqref{eq:red1}. 
  It is easy to see that $\on{Lip}_{I_0}(\overline c) \le L$. 
\end{proof}

Let us prove that Claim~\ref{claim:a} and Lemma~\ref{lem:ext} imply Theorem~\ref{thm:main}. 
That they imply Remark~\ref{rem:main} is obvious. 
Let $J_1 \subseteq J_2 \subseteq \cdots $ be a countable exhaustion of $I$ by compact intervals so that, for all $k$, $J_k$ is contained 
in the interior of $J_{k+1}$. By Claim~\ref{claim:a} and Lemma~\ref{lem:ext}, there exist lifts 
$\overline c_k : J_k \to V$, $k\ge 1$, of $c$ and compact neighborhoods $K_k \supseteq J_k$ in $I$ so that 
\[
    \on{Lip}_{J_k}(\overline c_k) \le C\, \big(\max_i \|c_i\|^{\frac{1}{d_i}}_{C^{d-1,1}(K_k)}\big), \quad k \ge 1,
\]
for $C=C(G \acts V, J_k, K_k)$.
We may construct a $C^{0,1}$-lift $\overline c : I \to V$ of $c$ iteratively in the following way. 
If $\overline c$ already exists on $J_k$ we extend it on $J_{k+1} \setminus J_k$ by $g \overline c_{k+1}$ for suitable 
$g \in G$ left and right of $J_k$ (cf.\ the first paragraph of the proof of Lemma~\ref{lem:ext}).
If $I_0  \Subset I$ is relatively compact then $I_0 \subseteq  J_{N}$ for some $N$. 
Thus for $t,s \in I_0$, $t<s$, there is a sequence $t=:t_0 < t_1 <\cdots <t_\ell := s$ of endpoints $t_i$ of the intervals 
$J_k$ (except possibly $t_0$ and $t_\ell$), elements $g_i \in G$, and $k_i \in \{1,\ldots,N\}$ 
so that 
\[
    \|\overline c(t)-\overline c(s))\| \le \sum_{i=1}^\ell \|g_i\overline c_{k_i}(t_i)-g_i\overline c_{k_i}(t_{i-1})\|
    = \sum_{i=1}^\ell \|\overline c_{k_i}(t_i)-\overline c_{k_i}(t_{i-1})\|
     \le \max_{1 \le k\le N} \on{Lip}_{J_k}(\overline c_k)\, |t-s|. 
\]
Setting $I_1:= \cup_{k=1}^N K_k$ we obtain \eqref{eq:uniform}.

\subsection{Convenient assumption}

The proof of Claim~\ref{claim:a} will be carried out by induction on the \emph{size} of $G$. If $G$ and $H$ 
are compact Lie groups we write $H < G$ if and only if $\dim H < \dim G$ or, if $\dim H = \dim G$, $H$ has fewer 
connected components than $G$.    

We replace the assumption that $c \in C^{d-1,1}(I,\si(V))$ by a new (weaker) assumption 
that will be more convenient for the inductive step. Before stating it we need a bit of notation. 

For open intervals $I_0$ and $I_1$ so that $I_0 \Subset I_1 \Subset I$, we set 
\[
  I_i':= I_i \setminus {c_1}^{-1}(0), \quad i=0,1.
\]
For $t_0 \in I_0'$ and $r>0$ consider the interval 
\[
  I_{t_0}(r) := \big(t_0 -r |c_1(t_0)|^\frac{1}{2},t_0 + r |c_1(t_0)|^\frac{1}{2}\big).
\]

\begin{assumption*} 
  Let $I_0 \Subset I_1$ be open intervals.      
  Suppose that $c \in C^{d-1,1}(\overline I_1,\si(V))$ and assume that there is a constant $A>0$ 
  so that for all $t_0 \in I_0'$, $t \in I_{t_0}(A^{-1})$, $i = 1,\ldots,n$, $k = 0,\ldots,d$,
  \begin{gather}
    \tag{A.1}\label{A.2} I_{t_0}(A^{-1}) \subseteq I_1\\ 
    \tag{A.2}\label{A.3} 2^{-1} \le \frac{c_1(t)}{c_1(t_0)} \le 2 \\ 
    \tag{A.3}\label{A.4} |{c_i}^{(k)}(t)| \le C\, A^{k}\, |c_1(t)|^{\frac{d_i-k}{d_1}} 
  \end{gather}
  where $C = C(G \acts V) \ge 1$.
  For $k=d$, \eqref{A.4} is understood to hold almost everywhere, by Rademacher's theorem. 
\end{assumption*}

\begin{remark*}    
  Condition \eqref{A.4} implies that  
  \begin{equation}
    \tag{A.4}\label{A.6} \big|\p_t^k \big({c_1}^{- \frac {d_i}{d_1}} c_i\big)(t)\big| \le C\, A^k\, |c_1(t)|^{-\frac k{d_1}},
  \end{equation}  
  where $C=C(G\acts V)$.
  In fact, if we assign $c_i$ the weight $d_i$ (and ${c_1}^{\frac 1 {d_1}}$ the weight $1$) 
  and let 
  $L(x_1,\ldots,x_n,y) \in \R[x_1,\ldots,x_n,y,y^{-1}]$ be weighted homogeneous of degree $D$, 
  then    
  \[
    \big|\p_t^k L\big(c_1,\ldots,c_n,{c_1}^{\frac 1 {d_1}}\big)(t)\big| 
    \le C\, A^k\, |c_1(t)|^{\frac{D-k}{d_1}},
  \] 
  for $C =C(G\acts V,L)$.
\end{remark*}

The following two claims clearly imply Claim~\ref{claim:a}.

\begin{claim} \label{claim:b}
  Any curve $c \in C^{d-1,1}(\overline I_1,\si(V))$ satisfying  
  \eqref{A.2}--\eqref{A.4} has a Lipschitz lift on a neighborhood of any $t_0 \in I_0'$ 
  with Lipschitz constant bounded from above by $C\, A$, where $C=C(G \acts V)$.
\end{claim}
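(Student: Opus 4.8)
The plan is to prove Claim~\ref{claim:b} by induction on the size of $G$, using the reduction to slice representations from Section~2 together with Glaeser's inequality (Lemma~\ref{Glaeser}) applied to the dominant invariant $c_1$. First I would dispose of the base case: if $G$ acts trivially on a neighbourhood of some lift value — more precisely, if the slice representation $G_{v_\alpha} \acts N_{v_\alpha}$ at the relevant point is trivial, i.e. $t_0$ lands in the principal stratum — then $c$ itself is, locally, a $C^{d-1,1}$ (indeed the relevant components are honestly smooth enough) parametrization of a point in a linear slice, and a Lipschitz (in fact better) lift is immediate with the stated constant. This is where the induction bottoms out, since the slice representation at a principal point is trivial and hence strictly smaller than $G$ unless $G$ is already trivial.

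For the inductive step, fix $t_0 \in I_0'$, so $c_1(t_0) > 0$. The key idea is to \emph{rescale by the dominant invariant}: set $\lambda(t) := c_1(t)^{1/d_1} = c_1(t)^{1/2}$ (using \eqref{eq:red1}), which by \eqref{A.3} is comparable to $c_1(t_0)^{1/2}$ on $I_{t_0}(A^{-1})$, and pass to the normalized curve $\underline c = (1, c_1^{-d_2/d_1}c_2,\ldots,c_1^{-d_n/d_1}c_n)$ as in Lemma~\ref{lem:red}. The estimate \eqref{A.6} (the Remark after the Assumption) tells us precisely that the components of $\underline c$, as functions of the rescaled time $s$ with $t = t_0 + \lambda(t_0)\, s$, have derivatives up to order $d$ bounded by $C A^k$ — i.e. after the change of variables $\underline c$ is a $C^{d-1,1}$ curve on a \emph{fixed-size} interval with norms controlled by a power of $A$. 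Since $\underline c_1 \equiv 1$, the point $\underline c(t_0)$ lies in $\si(V) \cap \{y_1 = 1\}$, which is covered by the finite family $\{B_\alpha\}_{\alpha\in\Delta}$ from \eqref{eq:red5}; choose $\alpha$ with $\underline c(t_0) \in B_\alpha = \si(U_{v_\alpha})$. Shrinking the $s$-interval (uniformly, since there are finitely many $\alpha$ and $\vh$ together with its derivatives are bounded), $\underline c$ maps into $\si(U_{v_\alpha})$, and Lemma~\ref{lem:red} reduces the lifting of $c$ near $t_0$ to lifting the curve $c^*$ over the slice orbit map $\ta : N_{v_\alpha} \to \R^m$ for the slice representation $G_{v_\alpha} \acts N_{v_\alpha}$.

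Now $G_{v_\alpha} < G$ (for $t_0$ off the lower-dimensional strata this is strict; the points of $I_0'$ lying over deeper strata form a closed set that we handle by the gluing Lemma~\ref{lem:ext} exactly as in the passage showing Claim~\ref{claim:a} follows), and by Lemma~\ref{lem:e} the maximal degree $e$ of the basic invariants $\ta_i$ satisfies $e \le d$, so $c^* \in C^{e-1,1}$ with the appropriate weighted bounds — one checks that \eqref{A.2}--\eqref{A.4} (with $c^*$, the $\ta_i$, $e$ in place of $c$, $\si_i$, $d$, and with $A$ replaced by $C A$) are inherited, again using \eqref{A.6}/the weighted-homogeneity Remark and Corollary~\ref{cor:red} to control $c_1^*$ by $c_1$. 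By the inductive hypothesis $c^*$ has a Lipschitz lift $\overline c^*$ near $t_0$ with $\on{Lip} \le C A$. Finally, undoing the rescaling: the lift of $c$ is $c_1^{1/d_1} v_\alpha + \overline c^*$ by \eqref{eq:red}, and its Lipschitz constant is estimated by differentiating — $\p_t(c_1^{1/2}) $ is bounded by $C A$ by Glaeser's inequality Lemma~\ref{Glaeser} applied to $f = c_1$ (which is nonnegative $C^{1,1}$, with $M \sim A$ valid by \eqref{A.2} and \eqref{A.4} for $k=2$), and the chain rule for $\overline c^*(t_0 + \lambda(t_0)^{-1}(t - t_0))$ contributes another factor $\lambda(t_0)^{-1} \cdot \on{Lip}(\overline c^*) \cdot \lambda(t) \le C A$ after using \eqref{A.3}. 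Collecting terms gives $\on{Lip}(\overline c) \le C A$ with $C = C(G \acts V)$, as claimed.

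The main obstacle I anticipate is \textbf{bookkeeping the constants and verifying that the rescaled/sliced curve $c^*$ genuinely satisfies the inductive hypotheses}, i.e. that properties \eqref{A.2}--\eqref{A.4} are stable under the two operations (normalization by $c_1$ and restriction to a slice via the real-analytic map $\vh$) with only a controlled blow-up $A \mapsto C A$ of the constant. The real-analyticity and boundedness of $\vh$ and its derivatives (arranged at the end of Section~2) is what makes this go through, but composing a weighted-homogeneous estimate through $\vh$ and tracking how the weights $e_i$ of the $\ta_i$ interact with the weights $d_i$ requires care — this is the technical heart of the argument, whereas the geometric skeleton (Glaeser + slice reduction + gluing) is routine given the preparatory lemmas.
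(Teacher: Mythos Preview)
Your outline matches the paper's proof: induction on the size of $G$, normalize to $\underline c$, reduce via Lemma~\ref{lem:red} to a curve $c^*$ for a slice representation, verify that $c^*$ inherits \eqref{A.2}--\eqref{A.4} with $A$ replaced by $CA$ (this is indeed the technical core, handled in the paper by a separate lemma using Corollary~\ref{cor:red}, the chain rule through $\vh$, and Lemma~\ref{shortcut}), then apply the inductive hypothesis and undo the reduction via \eqref{eq:red}.

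Two points deserve correction. First, the strictness of $G_{v_\alpha} < G$ is not conditional on $t_0$ lying in the principal stratum: once you have reduced to $V^G=\{0\}$ (assumption \eqref{eq:red2}) and chosen $v_\alpha$ with $\|v_\alpha\|=1$, the isotropy $G_{v_\alpha}$ is \emph{always} a proper subgroup, so the induction applies at every $t_0 \in I_0'$ without exception. Your proposed detour through Lemma~\ref{lem:ext} is therefore unnecessary, and in any case that lemma glues only across the zero set of $c_1$, not across deeper strata where $c_1\neq 0$. Second, the time-rescaling layer you introduce is superfluous and leads to the garbled chain-rule computation at the end: in the paper's formulation $c^*$ lives in the original time variable, the inductive hypothesis yields $\on{Lip}_t(\overline c^*)\le C A$ directly, and the remaining term $c_1^{1/d_1} v$ in \eqref{eq:red} is controlled simply by \eqref{A.4} with $i=k=1$ (which already subsumes Glaeser's inequality), so no extra $\lambda$-factors appear.
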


\begin{claim} \label{claim:c}
  If $c \in C^{d-1,1}(I,\si(V))$ then \eqref{A.2}--\eqref{A.4} hold 
  for each pair of open intervals $I_0$ and $I_1$ satisfying $I_0 \Subset I_1 \Subset I$ and with 
  $A \le C\, (\max_i \|c_i\|^{\frac{1}{d_i}}_{C^{d-1,1}(\overline I_1)})$ for $C=C(I_0, I_1)$.
\end{claim}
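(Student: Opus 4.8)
The claim is a purely ``hard-analysis'' statement: it asserts that a genuine $C^{d-1,1}$-curve in $\si(V)$ automatically satisfies the weighted pointwise bounds \eqref{A.2}--\eqref{A.4}, with the scale parameter $A$ controlled by a fractional power of the $C^{d-1,1}$-norms of the components. The plan is to derive \eqref{A.4} first, then deduce \eqref{A.3} and \eqref{A.2} from it by elementary continuity arguments. Throughout I would fix $I_0 \Subset I_1 \Subset I$ and write $\mu := \max_i \|c_i\|_{C^{d-1,1}(\overline I_1)}^{1/d_i}$, with the goal of taking $A = C\mu$ for a suitable $C = C(I_0,I_1)$ (and later absorbing the $G \acts V$-dependence only through the constant in \eqref{A.4}, which is harmless since that constant is allowed to depend on $G\acts V$).

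First I would prove \eqref{A.4}. The key input is the interpolation inequality of Lemma~\ref{taylor} applied to $f = c_i$ on a subinterval centered near $t_0$ of length comparable to $|c_1(t_0)|^{1/d_1}$: on such an interval $J$ one gets $|c_i^{(k)}(t)| \le C|J|^{-k}(\|c_i\|_{L^\infty(J)} + \on{Lip}_J(c_i^{(d_i-1)})|J|^{d_i})$ for $k \le d_i$. The subtle point is that the "height" $\|c_i\|_{L^\infty}$ near $t_0$ must be estimated against $|c_1(t_0)|^{d_i/d_1}$. This is exactly where the dominance inequality \eqref{eq:dom} enters: since the image lies in $\si(V)$, we have $|c_i(t)|^{1/d_i} \le C |c_1(t)|^{1/d_1}$ pointwise, i.e. $|c_i(t)| \le C|c_1(t)|^{d_i/d_1}$. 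Choosing $J = I_{t_0}(A^{-1})$ with $A$ of size $C\mu$ makes $|J|^{d_i} \cdot \on{Lip}_J(c_i^{(d_i-1)}) \lesssim A^{-d_i}|c_1(t_0)|^{d_i/d_1}\mu^{d_i} \lesssim |c_1(t_0)|^{d_i/d_1}$, which matches the height term; feeding this back through Lemma~\ref{taylor} and using \eqref{A.3} to replace $c_1(t_0)$ by $c_1(t)$ up to a factor~$2$ yields \eqref{A.4} for $k \le d_i$. For $d_i < k \le d$ the inequality is vacuous or follows since higher derivatives of $c_i$ vanish identically ($c_i$ is only required to be $C^{d-1,1}$, but the bound \eqref{A.4} for those $k$ carries a nonnegative exponent $d_i - k$ which, combined with $|c_1| \le \mu^{d_1}$ bounded on $\overline I_1$, is absorbed into the constant — I would check the exponents carefully here).

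Next, \eqref{A.3}: from the case $i=1$, $k=1$ of \eqref{A.4} (which is Glaeser's inequality, Lemma~\ref{Glaeser}, applied to the nonnegative $C^{1,1}$-function $c_1$ after noting $d_1 = 2$ under \eqref{eq:red1}) we get $|c_1'(t)| \le CA|c_1(t)|^{1/2}$, i.e. $|\,d/dt\,(c_1^{1/2})\,| \le CA/2$ wherever $c_1 > 0$. Integrating over $I_{t_0}(A^{-1})$, whose half-length is $A^{-1}|c_1(t_0)|^{1/2}$, gives $|c_1(t)^{1/2} - c_1(t_0)^{1/2}| \le (C/2)|c_1(t_0)|^{1/2}$, so for $A$ chosen large enough (a fixed multiple of $\mu$, with the multiple depending on the constant in Glaeser) we obtain $2^{-1/2} \le c_1(t)^{1/2}/c_1(t_0)^{1/2} \le 2^{1/2}$, which is \eqref{A.3}. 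There is a mild circularity — \eqref{A.4} was derived using \eqref{A.3} — which I would break by a standard bootstrap/connectedness argument: the set of $t \in I_{t_0}(A^{-1})$ where the ratio stays in $[3^{-1},3]$, say, is open, closed (in the interval) and nonempty, and on it the Glaeser estimate is valid, hence the ratio actually stays in $[2^{-1},2]$; standard continuity then upgrades this to the whole interval. Finally \eqref{A.2}, $I_{t_0}(A^{-1}) \subseteq I_1$, is immediate: its half-length is $A^{-1}|c_1(t_0)|^{1/2} \le A^{-1}\|c_1\|_{L^\infty(\overline I_1)}^{1/2} \le A^{-1}\mu^{d_1/2} \cdot C$, which is $\le \mathrm{dist}(I_0, \partial I_1)$ once $A \ge C(I_0,I_1)\mu$.

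**Main obstacle.** The one genuinely delicate point is the bootstrap coupling between \eqref{A.3} and \eqref{A.4}: the interpolation estimate needs control of $c_1$ on an interval whose \emph{length} is itself defined in terms of $c_1(t_0)$, so one cannot cleanly prove \eqref{A.4} before \eqref{A.3} nor vice versa. The clean way out is to prove both simultaneously on the maximal subinterval of $I_{t_0}(A^{-1})$ on which a weakened form of \eqref{A.3} holds, show this subinterval is all of $I_{t_0}(A^{-1})$ by a connectedness/continuity argument, and only then record the sharp constants. A secondary bookkeeping nuisance is tracking precisely how the final $A$ depends on $\mu = \max_i \|c_i\|^{1/d_i}_{C^{d-1,1}(\overline I_1)}$ rather than on the unnormalized norms, which is what makes the fractional exponents $1/d_i$ appear in \eqref{eq:uniform}; this requires consistently using the weighted homogeneity (the Remark after the Assumption) so that every occurrence of a norm of $c_i$ is raised to the power $1/d_i$.
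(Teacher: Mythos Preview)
Your plan has all the right tools (Glaeser's inequality, the interpolation Lemma~\ref{taylor}, the dominance \eqref{eq:dom}) and could be pushed through, but the order you propose manufactures a circularity that the paper simply avoids. The paper proves the conditions in the order \eqref{A.2} $\to$ \eqref{A.3} $\to$ \eqref{A.4}, and the key observation is that the first two require only information about $c_1$ and only the \emph{global} Lipschitz constant of $c_1'$ on $I_1$; no bootstrap is needed.

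Concretely: with $\de := \on{dist}(\p I_0,\p I_1)$, set
\[
A_1 := \max\bigl\{\de^{-1}\|c_1\|_{L^\infty(I_1)}^{1/2},\ (\on{Lip}_{I_1}(c_1'))^{1/2}\bigr\}.
\]
The first entry gives \eqref{A.2} for any $A \ge A_1$. Lemma~\ref{Glaeser} applied at the \emph{single} point $t_0$ with $M = A_1$ (its two hypotheses are satisfied precisely by the two entries in the max) yields $|c_1'(t_0)| \le 2A_1|c_1(t_0)|^{1/2}$. Then a second-order Taylor expansion of $c_1$ at $t_0$ --- not integration of a pointwise bound on $(c_1^{1/2})'$ as you propose --- gives \eqref{A.3} on $I_{t_0}((6A_1)^{-1})$: this uses only $|c_1'(t_0)|$ and the global a.e.\ bound $|c_1''| \le \on{Lip}_{I_1}(c_1') \le A_1^2$, so nothing at points $t \ne t_0$ is needed and the bootstrap evaporates. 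Only once \eqref{A.3} is established does one invoke Lemma~\ref{taylor} (with $m = d$, not $m = d_i$) together with \eqref{eq:dom} to obtain \eqref{A.4}; here the second contribution $A_2 := \max_i \{\on{Lip}_{I_1}(c_i^{(d-1)})\,\|c_1\|_{L^\infty(I_1)}^{(d-d_i)/2}\}^{1/d}$ enters, and $A = 6\max\{A_1,A_2\}$ suffices.

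Two small corrections to your sketch: for $k > d_i$ the exponent $d_i - k$ is \emph{negative}, not nonnegative, so $|c_1|^{(d_i-k)/d_1}$ is large and the inequality is easier, not vacuous; and the higher derivatives of $c_i$ do not vanish --- $c_i$ is $C^{d-1,1}$, not a polynomial. These cases fall out of the same application of Lemma~\ref{taylor} with $m = d$, which is why the paper uses $m = d$ uniformly rather than $m = d_i$.
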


\subsection{Proof of Claim~\ref{claim:b} (inductive step)}\label{proofofiii}

Let $c$, $I_0$, $I_1$, $A$, $t_0$ be as in the Assumption and hence satisfy \eqref{A.2}--\eqref{A.4}. 
We will show the following.
\begin{itemize}
  \item For some constant $C_1=C_1(G\acts V)>1$, the lifting problem for $c$ reduces on the interval 
  $I_{t_0}({C_1}^{-1}A^{-1})$ 
  to the lifting problem for some 
  associated curve $c^*$ in the orbit space of some slice representation $H\acts W$ of $G\acts V$ with  $H<G$.  
  \item The curve $c^*$ satisfies \eqref{A.2}--\eqref{A.4} for suitable neighborhoods $J_0$, $J_1$ of $t_0$
  and a constant $B=C\, A$ in place of $A$, where $C=C(G\acts V)$ . 
\end{itemize} 
This will allow us to conclude Claim~\ref{claim:b} by induction on the size of $G$.

Let us restrict $c$ to $I_{t_0}(A^{-1})$ and consider  
\[
  \underline c:= \big(1,{c_1}^{-\frac{d_2}{d_1}} c_2,\ldots,{c_1}^{-\frac{d_n}{d_1}} c_n\big) : 
  I_{t_0}(A^{-1}) \to  \si(V) \subseteq \R^n_y.
\]
Then $\underline c$ is continuous, by \eqref{A.3}, and bounded, by \eqref{eq:dom}. 
Moreover, by \eqref{A.6} and \eqref{A.3}, for $t \in I_{t_0}(A^{-1})$,
\begin{equation} \label{eq:barader}
  \|\underline c'(t)\| \le C_1\, A\, |c_1(t_0)|^{-\frac 1 {d_1}},    
\end{equation}  
for $C_1=C_1(G \acts V)$.
Consider the finite open cover $\{B_\al\}_{\al \in \De}= \{\si(U_{v_\al})\}_{\al \in \De}$ of the compact set $\si(V) \cap \{y_1=1\}$ 
from \eqref{eq:red5}. Let $2r_1>0$ be a Lebesgue number of the cover $\{B_\al\}_{\al \in \De}$.
Then for any $p \in \si(V) \cap \{y_1 = 1\}$ there is $\al_p \in \De$ so that 
\[
  B_p(r_1) \cap \si(V) \cap \{y_1=1\}  \subseteq B_{\al_p},
\]
where $B_p(r_1) \subseteq \R^n$ is the open ball centered at $p$ with radius $r_1$. 
If $C_1$ is the constant from \eqref{eq:barader}, then
\begin{equation} \label{eq:J1}
  J_1 := I_{t_0}(r_1 {C_1}^{-1} A^{-1})  \subseteq {\underline c}^{-1} (B_{\underline c(t_0)}(r_1)).
\end{equation}
By Lemma~\ref{lem:red} the lifting problem on the interval $J_1$ reduces to the curve 
$c^* = (c^*_i)_{i=1}^m$,
\begin{equation} \label{eq:cstar}
   c^*_i = c_1^{\frac{e_i}{d_1}} \vh_i\big({c_1}^{-\frac{d_2}{d_1}} c_2,\ldots,{c_1}^{-\frac{d_n}{d_1}} c_n\big),
   \quad e_i = \deg \ta_i,   
\end{equation}  
in $\ta(N_v)$, where $G_v \acts N_v$ is the slice representation at $v= v_{\al_{\underline c(t_0)}}$ with orbit 
mapping $\ta=(\ta_1,\ldots,\ta_m)$ 
and where the $\vh_i$ are real analytic; 
the first summand of \eqref{eq:red} 
is Lipschitz with Lipschitz constant 
bounded from above by $C\, A$ with $C=C(G \acts V)$ thanks to \eqref{A.4}. 
Fix $r_0 < r_1$ and set 
\begin{equation} \label{eq:J0}
  J_0 := I_{t_0}(r_0 {C_1}^{-1} A^{-1}),
\end{equation}
where $C_1$ is the constant from \eqref{eq:barader}.
(Here we assume without loss of generality that $r_1< C_1$ so that $r_0 {C_1}^{-1} < r_1 {C_1}^{-1} < 1$ and 
hence $J_0 \subseteq J_1 \subseteq I_{t_0}(A^{-1})$.)

Let us show that the curve $c^*$ satisfies \eqref{A.2}--\eqref{A.4} 
for the intervals $J_1$ and $J_0$ from \eqref{eq:J1} and \eqref{eq:J0} and a suitable constant $B>0$ in place of $A$. 
To this end we set 
\[
  J_i':= J_i \setminus (c^*_1)^{-1}(0), \quad i=0,1,
\]
consider, 
for $t_1 \in J_0'$ and $r>0$, the interval
\[
  J_{t_1}(r) := \big(t_1 -r |c^*_1(t_1)|^\frac{1}{2},t_1 + r |c^*_1(t_1)|^\frac{1}{2}\big),
\]
and prove the following lemma.

\begin{lemma}
  There is a constant $C=C(G\acts V,r_1,r_0)>1$ such that for $B = C\, A$ and 
  for all $t_1 \in J_0'$, $t \in J_{t_1}(B^{-1})$, $i = 1,\ldots,m$, $k = 0,\ldots,d$,
  \begin{gather}
    \tag{B.1}\phantomsection\label{B.2} J_{t_1}(B^{-1}) \subseteq J_1\\ 
    \tag{B.2}\phantomsection\label{B.3} 2^{-1} \le \frac{c^*_1(t)}{c^*_1(t_1)} \le 2 \\ 
    \tag{B.3}\phantomsection\label{B.4} |{(c^*_i)}^{(k)}(t)| \le \tilde C\, B^k\, |c^*_1(t)|^{\frac{e_i-k}{e_1}} 
  \end{gather}
  where $\tilde C=\tilde C(G\acts V)$.
\end{lemma}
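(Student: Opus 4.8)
The plan is to verify \eqref{B.2}, \eqref{B.3} and \eqref{B.4} in that order, after first recording pointwise bounds for the derivatives of $c^*$. Throughout one uses that $r_1{C_1}^{-1}<1$ forces $J_1\subseteq I_{t_0}(A^{-1})$, so \eqref{A.3} gives $2^{-1}|c_1(t_0)|\le|c_1(t)|\le 2|c_1(t_0)|$ for all $t\in J_1$, and that $|c^*_1|\le C_0|c_1|$ pointwise with $C_0=\sup_y|\vh_1(y)|$ by Corollary~\ref{cor:red}; recall also $d_1=e_1=2$. First I would establish
\begin{equation}\label{eq:plan-step1}
  |(c^*_i)^{(k)}(t)|\le C\,A^k\,|c_1(t)|^{\frac{e_i-k}{d_1}}
  \le C\,A^k\,|c_1(t_0)|^{\frac{e_i-k}{d_1}},\qquad t\in J_1,\ k=0,\dots,d,
\end{equation}
with $C=C(G\acts V)$ (a.e.\ for $k=d$). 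Indeed, by \eqref{eq:cstar} one has $c^*_i={c_1}^{e_i/d_1}\,\vh_i(\underline c-\si(v))$, where $\underline c-\si(v)$ stays in the fixed small ball on which $\vh$ and all its partial derivatives are bounded; the Fa\`a di Bruno formula, the fact that the number of terms of order at most $d$ is bounded in terms of $d$, and \eqref{A.6} give $|\p_t^j(\vh_i(\underline c-\si(v)))|\le C\,A^j\,|c_1|^{-j/d_1}$, while the Remark after Lemma~\ref{lem:red} applied to the weighted homogeneous ${c_1}^{e_i/d_1}=({c_1}^{1/d_1})^{e_i}$ gives $|\p_t^{k-j}({c_1}^{e_i/d_1})|\le C\,A^{k-j}|c_1|^{(e_i-(k-j))/d_1}$; Leibniz' rule then yields \eqref{eq:plan-step1}.

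For \eqref{B.2}: if $t_1\in J_0'$ then $J_{t_1}(B^{-1})$ is centred within $r_0{C_1}^{-1}A^{-1}|c_1(t_0)|^{1/2}$ of $t_0$ and has radius $B^{-1}|c^*_1(t_1)|^{1/2}\le B^{-1}(2C_0|c_1(t_0)|)^{1/2}$, so taking $B=C\,A$ with $C$ large and using $r_0<r_1$ yields $J_{t_1}(B^{-1})\subseteq J_1$. For \eqref{B.3} I would apply Glaeser's inequality (Lemma~\ref{Glaeser}) to the nonnegative function $c^*_1\in C^{1,1}$ near $\overline{J_1}$: with $M=C\,A$, $C$ large, the hypotheses hold by \eqref{eq:plan-step1} (case $i=1$, $k=2$) and by the preceding paragraph, whence $|(c^*_1)'(t_1)|\le C\,A\,|c^*_1(t_1)|^{1/2}$ for every $t_1\in J_0'$; a second-order Taylor expansion of $c^*_1$ about $t_1$, together with $|(c^*_1)''|\le C\,A^2$ on $J_1$, then forces $|c^*_1(t)-c^*_1(t_1)|\le 2^{-1}|c^*_1(t_1)|$ on $J_{t_1}(B^{-1})$ once $B=C\,A$ with $C$ large, which yields \eqref{B.3}.

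It remains to prove \eqref{B.4}. The case $k=0$ is the dominance estimate \eqref{eq:dom} for the orbit map $\ta$ of the slice representation, $|c^*_i|\le C\,|c^*_1|^{e_i/e_1}$. For $1\le k\le d$ I would interpolate on $J:=J_{t_1}(B^{-1})$ (of length comparable to $B^{-1}|c^*_1(t_1)|^{1/2}$) via Lemma~\ref{taylor} with $m=d$, which bounds $|(c^*_i)^{(k)}(t)|$ by a constant times $|J|^{-k}\|c^*_i\|_{L^\infty(J)}+|J|^{d-k}\|(c^*_i)^{(d)}\|_{L^\infty(J)}$. Into the first summand I insert $\|c^*_i\|_{L^\infty(J)}\le C\,|c^*_1(t_1)|^{e_i/2}$ (the case $k=0$ together with \eqref{B.3}), which produces exactly $C\,B^k|c^*_1(t_1)|^{(e_i-k)/2}$; into the second summand I insert $\|(c^*_i)^{(d)}\|_{L^\infty(J)}\le C\,A^d|c_1(t_0)|^{(e_i-d)/2}$ from \eqref{eq:plan-step1}, and since $B=C\,A$, $e_i\le d$ (Lemma~\ref{lem:e}) and $|c^*_1|/|c_1|\le C_0$, a short computation (in which the powers of $|c_1(t_0)|$ cancel) turns it into $\le C\,B^k|c^*_1(t_1)|^{(e_i-k)/2}$ as well. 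Finally \eqref{B.3} replaces $|c^*_1(t_1)|$ by $|c^*_1(t)|$. Choosing the constant in $B=C\,A$ large enough to meet the finitely many smallness requirements above — all thresholds depending only on $G\acts V$, $r_0$, $r_1$ — finishes the proof; the lemma then says precisely that $c^*$ satisfies the Assumption for the slice representation $G_v\acts N_v$ (with $G_v<G$), as needed for the induction in Claim~\ref{claim:b}.

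I expect the main obstacle to be \eqref{B.4} in the range $1\le k\le e_i$. There the plain chain-rule bound \eqref{eq:plan-step1} only produces a power of $|c_1|$, which may be far larger than $|c^*_1|$ — exactly when $\underline c$ runs close to the origin of the slice, i.e.\ when $\vh_1(\underline c-\si(v))$ is small — so it cannot be converted directly into the required power of $|c^*_1|$. The interpolation inequality of Lemma~\ref{taylor} resolves this because it feeds in the sharp order-zero bound $|c^*_i|\le C\,|c^*_1|^{e_i/2}$ coming from dominance in the slice, which is the only ingredient carrying the correct power of $|c^*_1|$.
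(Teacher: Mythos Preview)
Your proof is correct and follows essentially the same route as the paper: you establish the chain-rule bound \eqref{eq:plan-step1} (the paper's \eqref{derivatives2}), get \eqref{B.2} directly, and then use Glaeser's inequality plus the interpolation Lemma~\ref{taylor} for \eqref{B.3} and the intermediate cases of \eqref{B.4}. The only cosmetic difference is packaging: the paper first observes that \eqref{eq:plan-step1} together with $|c^*_1|\le C_0|c_1|$ immediately gives \eqref{B.4} for $k\ge e_i$ (since the exponent $(e_i-k)/e_1$ is nonpositive), and then invokes Lemma~\ref{shortcut}---which is exactly your Glaeser-plus-Lemma~\ref{taylor} argument, applied with $m=e_i$---to cover \eqref{B.3} and the remaining range $0<k<e_i$; you instead run Lemma~\ref{taylor} with $m=d$ uniformly for all $1\le k\le d$, which works just as well once one uses $e_i\le d$ and $|c^*_1|\le C_0|c_1|$ in the second summand (and indeed the powers of $|c_1(t_0)|$ do cancel, as you say).
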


\begin{proof}
  If
  \begin{equation*} \label{eq:D1}
    B \ge  (r_1-r_0)^{-1} \sqrt{2\, C_0}\, C_1\,  A,
  \end{equation*}
  where $C_0$ and $C_1$ are the constants from Corollary~\ref{cor:red} and \eqref{eq:barader}, respectively,
  then  
  by Corollary~\ref{cor:red} and \eqref{A.3},
  \[
    B^{-1}|c^*_1(t_1)|^{\frac 1 2}  
    \le (r_1-r_0)\, {C_1}^{-1}\,  A^{-1}\, |c_1(t_0)|^{\frac 1 2},
  \]
  and so \eqref{B.2} follows from \eqref{eq:J1} and \eqref{eq:J0}, as $t_1 \in J_0$. 

  Next we claim that, on $J_1$,
  \begin{equation} \label{eq:psi}
    \big|\p_t^k \vh_i \big({c_1}^{-\frac{d_2}{d_1}} c_2,\ldots,{c_1}^{-\frac{d_n}{d_1}} c_n\big)\big| 
    \le C\, A^k\, |c_1|^{-\frac k {d_1}},
  \end{equation}
  for $C=C(G\acts V)$.
  To see this we differentiate the following equation $(k-1)$ times, apply  
  induction on $k$, and use \eqref{A.6},
  \begin{equation} \label{derivatives}
    \p_t \vh_i \big({c_1}^{-\frac{d_2}{d_1}} c_2,\ldots,{c_1}^{-\frac{d_n}{d_1}} c_n\big) 
    = \sum_{j=1}^n (\p_j \vh_i)(\underline c)\, \p_t \big({c_1}^{-\frac {d_j}{d_1}} c_j\big);
  \end{equation}
  recall that all partial derivatives of the $\vh_i$'s are separately bounded on $\underline c(J_1)$ and these bounds are 
  universal. From \eqref{eq:cstar} and \eqref{eq:psi} we obtain, on $J_1$ and for all $i=1,\ldots,m$, $k=0,\ldots,d$,
  \begin{equation} \label{derivatives2}
    |{(c^*_i)}^{(k)}| \le C\, A^k\, |c_1|^{\frac{e_i-k}{d_1}},
  \end{equation}
  for $C=C(G\acts V)$,
  and so, by Corollary~\ref{cor:red} and as $d_1=e_1=2$,   
  \begin{equation} \label{eq:i-k<0}
    |{(c^*_i)}^{(k)}| \le C\, A^k\, |c^*_1|^{\frac{e_i-k}{e_1}} \quad \text{ if } e_i-k \le 0,
  \end{equation} 
  for $C = C(G\acts V)$. 
  This shows \eqref{B.4} for $k\ge e_i$, and \eqref{B.4} for $k=0$ follows from \eqref{eq:dom}.
  The remaining inequalities, i.e., \eqref{B.4} for $0<k< e_i$ as well as \eqref{B.3}, follow now from Lemma \ref{shortcut} below
  (since $d \ge e = \max_i e_i$, by Lemma~\ref{lem:e}).
\end{proof}

\begin{lemma}\label{shortcut}
There is a constant $C=C(G \acts V)\ge 1$ such that the following holds.  
If \eqref{A.2} and \eqref{A.4} for $k=0$ and $k=d_i$, $i=1, ... ,n$,
are satisfied, then so are \eqref{A.3} and \eqref{A.4} for $k<d_i$, $i=1, ... ,n$, 
after replacing $A$ by $C\, A$.   
\end{lemma}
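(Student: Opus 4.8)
The plan is to derive \eqref{A.3} from Glaeser's inequality (Lemma~\ref{Glaeser}) applied to $c_1$, and then to derive \eqref{A.4} for the intermediate orders $0<k<d_i$ from the interpolation inequality Lemma~\ref{taylor}. The point of the latter is that on an interval $I$ of length comparable to $A^{-1}|c_1(t_0)|^{1/2}$ the curve $c_i$ satisfies $\|c_i\|_{L^\infty(I)}\le C|c_1(t_0)|^{d_i/2}$ (by \eqref{A.4} for $k=0$, i.e.\ \eqref{eq:dom}, once $c_1$ is controlled on $I$) and $\Lip_I(c_i^{(d_i-1)})\le C A^{d_i}$ (by \eqref{A.4} for $k=d_i$), so that Lemma~\ref{taylor} returns exactly $|c_i^{(k)}|\le C A^k|c_1(t_0)|^{(d_i-k)/2}$. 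Recall that $d_1=2$ by \eqref{eq:red1}, whence $d\ge 2$; in particular $c_1\ge 0$ (as $c$ maps into $\si(V)$ and $\si_1=\|\cdot\|^2$), $c_1\in C^{1,1}(\overline I_1)$, and \eqref{A.4} with $i=1$, $k=2$ bounds $|c_1''|$ by $C A^2$ on each $I_{t_0}(A^{-1})$, for a constant $C=C(G\acts V)$ (as always, $C$ may change from line to line and depends only on $G\acts V$).

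First I would establish \eqref{A.3}. Fix $t_0\in I_0'$ and apply Lemma~\ref{Glaeser} to $f=c_1$ on $I_1$ with $M:=C A$, where $C\ge 1$ is chosen so large that $M^2\ge\Lip_{I_{t_0}(M^{-1})}(c_1')$; since $M\ge A$ the inclusion $I_{t_0}(M^{-1})\subseteq I_{t_0}(A^{-1})\subseteq I_1$ holds by \eqref{A.2}, so the lemma applies and gives $|c_1'(t_0)|\le 2M|c_1(t_0)|^{1/2}$. Inserting this and $|c_1''|\le C A^2$ into Taylor's formula for $c_1$ at $t_0$ yields, for $|t-t_0|\le\lambda A^{-1}|c_1(t_0)|^{1/2}$ with $0<\lambda\le 1$,
\[
  |c_1(t)-c_1(t_0)|\le |c_1'(t_0)|\,|t-t_0|+\tfrac12\sup_{I_{t_0}(A^{-1})}|c_1''|\,|t-t_0|^2\le C\lambda\,|c_1(t_0)| .
\]
With $\lambda=1$ this gives $\sup_{I_{t_0}(A^{-1})}c_1\le C|c_1(t_0)|$; choosing $\lambda_0=\lambda_0(G\acts V)\in(0,1]$ so small that $C\lambda_0\le\tfrac12$ gives $\tfrac12\le c_1(t)/c_1(t_0)\le 2$ for $|t-t_0|\le\lambda_0 A^{-1}|c_1(t_0)|^{1/2}$; that is, \eqref{A.3} holds with $A$ replaced by $A':=\lambda_0^{-1}A=C A$.

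Next, for \eqref{A.4} with $1\le k\le d_i-1$: fix $t_0\in I_0'$, put $I:=I_{t_0}(A^{-1})\subseteq I_1$, and note $c_i\in C^{d_i-1,1}(\overline I)$ since $d_i\le d$. By Lemma~\ref{taylor} with $m=d_i$, for every $t\in I$,
\[
  |c_i^{(k)}(t)|\le C\,|I|^{-k}\bigl(\|c_i\|_{L^\infty(I)}+\Lip_I(c_i^{(d_i-1)})\,|I|^{d_i}\bigr) .
\]
Now $|I|=2A^{-1}|c_1(t_0)|^{1/2}$; by \eqref{A.4} for $k=0$ and the bound $\sup_I c_1\le C|c_1(t_0)|$ proved above, $\|c_i\|_{L^\infty(I)}\le C|c_1(t_0)|^{d_i/2}$; and by \eqref{A.4} for $k=d_i$, $\Lip_I(c_i^{(d_i-1)})\le C A^{d_i}$, hence $\Lip_I(c_i^{(d_i-1)})\,|I|^{d_i}\le C|c_1(t_0)|^{d_i/2}$. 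Therefore $|c_i^{(k)}(t)|\le C A^k|c_1(t_0)|^{(d_i-k)/2}$ for all $t\in I$. Restricting now to $t\in I_{t_0}(A'^{-1})\subseteq I$ and using \eqref{A.3} for $A'$ (which forces $c_1>0$ there) in the form $|c_1(t_0)|\le 2|c_1(t)|$ --- legitimate because $d_i-k>0$ --- converts this into $|c_i^{(k)}(t)|\le C A'^k|c_1(t)|^{(d_i-k)/2}$, which is \eqref{A.4} for the constant $A'$. Enlarging $C$ to the largest of the finitely many constants that appeared, and noting $A'=C A$, completes the proof.

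I expect the only genuine difficulty --- more bookkeeping than substance --- to be keeping the two scales apart: the hypotheses \eqref{A.2} and \eqref{A.4} are at our disposal on the intervals $I_{t_0}(A^{-1})$, whereas the conclusions \eqref{A.3} and \eqref{A.4} must hold on the strictly smaller $I_{t_0}(A'^{-1})$ with $A'=C A$, and one must verify that the final constant $C$ depends only on $G\acts V$ (through the constant in \eqref{A.4}, the covering data fixed in \eqref{eq:red5}, and the degree bound $d$). Everything else is Taylor's formula, Glaeser's inequality, and the elementary interpolation inequality of Lemma~\ref{taylor} (ultimately Proposition~\ref{prop:interpol}).
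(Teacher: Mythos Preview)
Your proof is correct and follows essentially the same approach as the paper's: Glaeser's inequality (Lemma~\ref{Glaeser}) applied to $c_1$ yields \eqref{A.3} on a shrunken interval, and then Lemma~\ref{taylor} with $m=d_i$ on $I_{t_0}(A^{-1})$ interpolates the intermediate derivatives. The paper simply writes ``The other inequalities follow from Lemma~\ref{taylor}'' where you spell out the computation; your version is just a more detailed rendering of the same argument.
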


\begin{proof}
By assumption $\on{Lip}_{I_{t_0}(A^{-1})}(c_1')\le C\, A^2$, where $C$ is the constant from \eqref{A.4}.  
Thus, by Lemma \ref{Glaeser} for  $f=c_1$ and  $M = C^{\frac 1 2} A$, we 
get 
$$
|c_1'(t_0)| \le 2 M |c_1 (t_0)|^{\frac 1 2}.  
$$
It follows that, for $t\in I_{t_0}((6M)^{-1})$, 
\begin{align}\label{long}
  \frac{|c_1(t) - c_1(t_0)|}{|c_1(t_0)|} \le 
      \frac{|c_1'(t_0)|}{|c_1(t_0) |}|t-t_0| + \int_0^1 (1-s) |c_1''(t_0 + s (t-t_0))| ds\, \frac{|t-t_0|^2}{|c_1(t_0)|} 
      \le \frac{1}{2}
    \end{align}
which implies \eqref{A.3}.  
The other inequalities follow from Lemma \ref{taylor}.  
\end{proof}

We may now finish the proof of Claim \ref{claim:b}.
By assumption \eqref{eq:red2}, $V^G =\{0\}$ and thus $G_v < G$. The inductive hypothesis yields a Lipschitz lift 
$\overline c^*$ of $c^*$ over $\ta$ with Lipschitz constant bounded from above by $C\, B$, 
for $C=C(G_v\acts N_v)$. 
By Lemma~\ref{fix} and \eqref{eq:i-k<0} for $e_i=k=1$ (the basic invariants of $G_v \acts N_v^{G_v}$ form a system of linear coordinates 
on $N_v^{G_v}$), 
we can assume that $N_v^{G_v} =\{0\}$.
By Lemma~\ref{lem:red},
\[
    {c_1}^\frac{1}{d_1} v + \overline c^*
\]
is a lift of $c$ over $\si$. 
Thanks to \eqref{A.4} for $i=k=1$ and 
since there are only finitely many isomorphism types of slice representations, this lift is Lipschitz with Lipschitz 
constant bounded 
from above by $C\, A$, for $C=C(G \acts V)$.
This ends the proof of Claim \ref{claim:b}.

\subsection{Proof of Claim \ref{claim:c}} \label{proof:iii} 

Let $\de$ denote the distance between the endpoints of $I_0$ and those of $I_1$. 
  Set 
  \begin{align}\label{A1A2}
     & A_1:=  \max\Big\{  
      \de^{-1}\|c_1\|_{L^\infty(I_1)}^{\frac{1}{2}}, (\on{Lip}_{I_1}(c_1'))^{\frac 1 2}  \Big\}  \\ \notag
     & A_2 :=  \max_i \Big\{M_i \|c_1\|^{\frac {d-d_i} 2}_{L^\infty(I_1)} \Big\} ^{\frac 1 d} ,  \quad
     M_i :=   \on{Lip}_{I_1}({c_i}^{(d-1)}), 
  \end{align}
  and choose 
  \begin{align}\label{formulaforA}
    A \ge  A_0 = 6 \max \{{A_1}, A_2\}. 
  \end{align} 
  To have \eqref{A.2} and \eqref{A.3} it suffices to assume $A\ge 6A_1$. 
  For $t_0 \in I_0'$ obviously $I_{t_0}({A_1}^{-1}) \subseteq I_1$ and thus \eqref{A.2}. 
  Then Lemma~\ref{Glaeser} implies  
    \begin{align*}
       |c_1'(t_0)| \le  2A_1 \, |c_1(t_0)|^{\frac{1}{2}},
    \end{align*} 
  and so, for $t_0 \in I_0'$ and $t \in I_{t_0}((6A_1)^{-1})$,  
  \eqref{long} and hence \eqref{A.3} holds. 
  Finally, 
  Lemma~\ref{taylor}, \eqref{eq:dom}, and \eqref{A.3} imply \eqref{A.4} for $t \in I_{t_0}(A^{-1})$.

\subsection{Bounds for the Lipschitz constant} \label{ssec:bounds}

Let $\rep$ satisfy \eqref{eq:red1} and \eqref{eq:red2}, let $c \in C^{d-1,1}(I,\si(V))$, and let $I_0 \Subset I$.
Then there is a neighborhood $I_1$ of $I_0$ with $I_0 \Subset I_1 \Subset I$ such that the lift 
$\overline c \in C^{0,1}(I,V)$ constructed in the above proof satisfies
\begin{align}\label{finalbounds}
  \on{Lip}_{I_0}(\overline c)  & \le  C(G \acts V)\, 
    \max\Big\{  \de^{-1}\|c_1\|_{L^\infty( I_1)}^{\frac{1}{2}}, (\on{Lip}_{ I_1}(c_1'))^{\frac 1 2},   
    \max_i \big\{M_i \|c_1\|^{\frac {d-d_i} 2}_{L^\infty( I_1)} \big\} ^{\frac 1 d}   \Big\} \\
\notag  
     &  \le C(G \acts V,I_0,I_1)\, \big(\max_i \|c_i\|^{\frac 1 {d_i}}_{C^{d-1,1}(\overline I_1)}\big) 
      \\
\notag  
     &  \le C(G \acts V,I_0,I_1)\, \big(1+\max_i \|c_i\|_{C^{d-1,1}(\overline I_1)}\big) 
\end{align}
where $\de$ is the distance between the endpoints of $I_0$ and those of $I_1$, and 
$M_i = \on{Lip}_{ I_1}({c_i}^{(d-1)})$. 
This follows from Claim \ref{claim:b}, \eqref{A1A2}, \eqref{formulaforA}, and Lemma \ref{lem:ext}.

\section{Proof of Theorem~\ref{mainC1}} \label{sec:proofC1}

Let $(G\acts V,d,\si)$ satisfy \eqref{eq:red1} and \eqref{eq:red2}, and  
let $c \in C^{d}(I,\si(V))$.
In the proof of Theorem~\ref{mainC1}, induction on the size of $G$ will provide us with local lifts of class $C^1$ 
near points where $c$ is not flat (in the sense that they are not of Case~\hyperref[Case2]{2} of Subsection \ref{endofproof}).  
Moreover, we shall see that the derivatives of these local lifts converge to $0$ as $t$ tends 
to flat points. This faces us with the problem of gluing these local lifts. We tackle this problem first.

\subsection{Algorithm for local lifts} \label{ssec:algorithm}

We choose a finite cover 
$\{G U_{v_\al}\}_{\al \in \De}$  
of a neighborhood of the sphere $S(V) ={c_1}^{-1}(1)$ in $V$ so that $U_v$ is transverse to all the orbits in 
$GU_{v_\al}$ with the angle very close to $\pi / 2$.  It induces a cover of $\si(V) \cap \{y_1=1\}$,  
 \begin{equation*}%
  \{B_\al\}_{\al \in \De} = \{\si(U_{v_\al})\}_{\al \in \De},  
\end{equation*}
in analogy to \eqref{eq:red5}.

Lemma \ref{lem:red} provides an algorithm for the construction of a lift of $c$.   
After removing the fixed points, see Subsection \ref{ssec:fixedpoints},  
we lift $c$ restricted to $I':=\{ t \in I : c_1(t) \ne 0\}$ and then extend it trivially to $\{ t \in I : c_1(t)=0\}$.  
For this we consider 
\[
    \underline c := \big(1,{c_1}^{-\frac{d_2}{d_1}} c_2,\ldots,{c_1}^{-\frac{d_n}{d_1}} c_n\big). 
\] 
For each connected component $I_1$ of the induced cover $  \{{\underline c} ^{-1} (B_\al)\}_{\al \in \De}$ of $I'$  
we lift $c|_{{I_1}}$ to the slice $N_v$, $v=v_\alpha$,  using Lemma \ref{lem:red} and hence the induction.  
This reduction ends when $\underline c(I) \subseteq B_\alpha$ with $B_\alpha$ in the open stratum 
(where we keep the notation $\underline c$, $I$, and $B_\al$ 
for the respective reduced objects). 

Thus for any $t_0\in I$ there is a neighborhood $I_{t_0}$ and a lift $\bar c$ of $c$ on $I_{t_0}$ 
that is entirely contained in an affine transverse slice to the orbit over $c(t_0)$ that is close to the normal 
slice $S_{\overline c(t_0)}$ from \eqref{eq:normalslice}. (Note that the orbit over $0 \in \si(V)$ is just the 
origin in $V$ and every slice is a neighborhood of the origin.)  

This picture is not complete. One needs to make precise how these local lifts are glued together.

\subsection{Change of slice diffeomorphisms}  \label{ssec:change}

Fix $v \in V$ and let $S_v$ be the normal slice of the orbit $Gv$ at $v$; see \eqref{eq:normalslice}.  
Let $H=G_v$ and fix a local analytic section $\varphi_H : G/H \to G$ of the principal bundle $G \to G/H$ 
such that $\varphi_H ([e]) = e$.  Then 
\begin{align}\label{Phi_v}
\Phi_v :   G/H \times S_v  \to V, \quad \Phi_v([g], x) = \varphi_H ([g])  x
\end{align}
is a local diffeomorphism and $\Phi_v  ([e],v) = v$.  Indeed, $\Phi_v$ equals the following composition 
\[
  \xymatrix{
    G/H \times S_v  \ar[rr]^{\al} 
    && G \times_{G_v} S_v \ar[rr]^{\ph} && V ,
  } 
\]
where $\ph : G \times_{G_v} S_v \to V$, $[g,x] \mapsto gx$, is the slice mapping from Theorem \ref{thm:slice}, and 
$\alpha ([g],x)$ is the class of $(\varphi_H([g]),x)$.  Then $\alpha$ is a diffeomorphism with the inverse 
$$
\alpha^{-1} ([g,x]) = \alpha^{-1} \big([g  g^{-1}  \varphi_H([g]),  (\varphi_H([g]))^{-1} g x]\big) 
= ([g], (\varphi_H([g]))^{-1} g x).  
$$

Let $M_v$ 
be another affine transverse slice at $v$, and we suppose that the angle between $N_v$ and $M_v$  is small.  
The second coordinate of the inverse of $\Phi_v$ restricted to $M_v$ gives a local diffeomorphism 
$$
h_{M_v} : M_v \to S_v . 
$$  
The first coordinate of the inverse of $\Phi_v$ composed with $\varphi_H$ gives a mapping 
$$
s_{M_v} : M_v \to G 
$$
such that 
$$
h_{M_v} (x) = (s_{M_v}(x))^{-1}  x.
$$
By \eqref{Phi_v} the partial derivatives of $s_{M_v}$ and $h_{M_v}$ can be bounded in terms of the partial derivatives 
of $\varphi_H $ and the angle between $N_v$ and $M_v$.  

\begin{remark} 
The above construction is uniform in the following sense.  If $v'= g_0 v$ then $H=G_v$ and $H'=G_{v'}$ are conjugate, $H'= g_0 H g_0^{-1}$.  
Conjugation by $g_0$ on $G$ induces an isomorphism $G/H \to G/H', [g]_H \mapsto [g_0 g g_0^{-1}]_{H'}$. 
Given $\varphi_H$ we define $\varphi_{H'}$ by the following diagram.
\[
  \xymatrix{
     G \ar^{\on{conj}_{g_0}}[rr] \ar[d] && G \ar[d] \\
     G/H \ar@/^10pt/^{\varphi_H}[u] \ar^{\cong}[rr]  && G/H' \ar@/_10pt/_{\varphi_{H'}}@{.>}[u]
  }
\]  
Thus 
if we fix $\varphi_H $ for each conjugacy class and suppose  the angle between $N_v$ and $M_v$ is small we 
obtain bounds on the derivatives of  $s_{M_v}$ and $h_{M_v}$ independent of $v$ (valid in a neighborhood of 
$v$ whose size depends on the orbit $Gv$).  
\end{remark}

\subsection{Gluing the local lifts} \label{ssec:glue}

Suppose that there are local lifts $\overline c_1$ and $\overline c_2$ of $c$ resulting from the algorithm described in 
Subsection \ref{ssec:algorithm} such that the respective domains of definition $I_1$ and $I_2$ have nontrivial 
intersection. 
Fix $t_0 \in I_1 \cap I_2$.  We may assume that 
$\overline c_1 (t_0) = \overline c_2 (t_0)$ and denote this vector by $v$.  
Then, by construction, there exist a neighborhood 
$I_{t_0}$ of $t_0$ in $I_1 \cap I_2$ and 
slices $M^1_v$ and $M^2_v$ transverse to $Gv$ containing $\overline c_1(I_{t_0})$ and $\overline c_2(I_{t_0})$, respectively.  
Then, by Subsection \ref{ssec:change}, 
$$
I_{t_0} \ni t \mapsto h_{M^i_v} (\overline c_i (t)), \quad  i=1,2,
$$
are two lifts of $c$ on $I_{t_0}$ contained in $S_v$.  
If we moreover assume that  $c(I_{t_0})$ belongs to a single stratum, then these two lifts coincide 
(since all orbits of type $(G_v)$ meet $S_v$ in a single point), 
and thus, for $t \in I_{t_0}$,
\begin{align}\label{gluing}
s_{M^1_v}  (\overline c_1 (t))^{-1} \ \overline c_1(t)  =  s_{M^2_v}  ( \overline c_2(t))^{-1}\  \overline c_2(t) .
\end{align} 

Then, there is a universal constant $C>0$ such that for $i=1,2$ and $t \in I_{t_0}$
\begin{equation} \label{eq:s}
  |\partial_t  s_{M^i_v}  (\overline c_i (t))| \le C \max  \|\overline c_i' (t)\| .  
\end{equation}

\begin{lemma} \label{lem:C1glue}
  Let $K \Subset J \Subset I$ be intervals and let $s : J \to G$ be of class $C^1$. 
  Then there is $\tilde s : I \to G$ of class $C^1$ such that 
  \begin{itemize}
  \item[(i)] $s|_{K}= \tilde s|_{K}$. 
  \item[(ii)] $\|s'\|_{L^\infty (K)}  =  \|\tilde s'\|_{L^\infty (I)} $.
  \item[(iii)]  $\tilde s$ is constant on each component of $I\setminus J$.
  \end{itemize}
\end{lemma}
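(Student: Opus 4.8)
The plan is to pass to the (left) logarithmic derivative, where the problem becomes linear and convex. Fix the inclusions $G \subseteq \on{O}(V) \subseteq \on{End}(V)$ and endow $\on{End}(V)$ with the Hilbert--Schmidt norm, which is invariant under left and right multiplication by elements of $\on{O}(V)$. Then for every $C^1$ curve $\gamma$ in $G$ we have $\gamma(t)^{-1}\gamma'(t) \in \mathfrak{g} := \on{Lie}(G)$ and $\|\gamma'(t)\| = \|\gamma(t)^{-1}\gamma'(t)\|$. Set $L := \|s'\|_{L^\infty(K)}$ and $\omega := s^{-1}s' : J \to \mathfrak{g}$, a continuous curve with $\|\omega\|_{L^\infty(\overline K)} = L$ (since $\overline K \subseteq J$ and $s$ is $C^1$ on $J$). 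We may assume $J$ is open, as all intervals in this paper are.

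First I would build the extended logarithmic derivative. Choose $a',b' \in J$ with $\overline K \subseteq (a',b')$ and $[a',b'] \subseteq J$, write $\overline K = [a,b]$, and define a continuous $\tilde\omega : I \to \mathfrak{g}$ by: $\tilde\omega := \omega$ on $[a,b]$; $\tilde\omega(t) := \tfrac{t-a'}{a-a'}\,\omega(a)$ for $t \in [a',a]$ and $\tilde\omega(t) := \tfrac{b'-t}{b'-b}\,\omega(b)$ for $t \in [b,b']$; and $\tilde\omega := 0$ on $I \setminus (a',b')$. The pieces match at $a',a,b,b'$, so $\tilde\omega$ is continuous, and since each interpolating segment runs from a vector of norm $\le L$ to $0$ and therefore stays in the ball of radius $L$, we get $\|\tilde\omega\|_{L^\infty(I)} = L$, the value $L$ already being attained on $\overline K$.

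Next, let $\tilde s : I \to G$ be the solution of the Lie-group ODE $\tilde s'(t) = \tilde s(t)\,\tilde\omega(t)$ with $\tilde s(t^*) = s(t^*)$ for some fixed $t^* \in K$. Writing $\tilde s = s(t^*)\,U$ with $U' = U\tilde\omega$, $U(t^*) = e$, one sees that the right-hand side is a vector field tangent to $G$ along $G$, so the solution stays in $G$, and it exists on all of $I$ because $G \subseteq \on{O}(V)$ is bounded. Since $\tilde s$ is continuous and $\tilde\omega$ is continuous, $\tilde s' = \tilde s\tilde\omega$ is continuous, hence $\tilde s$ is $C^1$. On $K$ we have $\tilde s^{-1}\tilde s' = \tilde\omega = \omega = s^{-1}s'$ and $\tilde s(t^*) = s(t^*)$, so uniqueness of ODE solutions gives $\tilde s|_K = s|_K$, which is (i). On $I \setminus (a',b') \supseteq I \setminus J$ we have $\tilde\omega = 0$, hence $\tilde s' = 0$, so $\tilde s$ is constant on each component, which is (iii) (indeed on a little more). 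Finally, since $\tilde s(t) \in \on{O}(V)$, $\|\tilde s'(t)\| = \|\tilde\omega(t)\|$, so $\|\tilde s'\|_{L^\infty(I)} = \|\tilde\omega\|_{L^\infty(I)} = L = \|s'\|_{L^\infty(K)}$ (the inequality $\ge$ also being immediate from (i)), which is (ii).

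I do not expect a serious obstacle here. The only points that need care are the standard facts that a continuous $\mathfrak{g}$-valued right-hand side yields a global $C^1$ solution of $\tilde s' = \tilde s\tilde\omega$ which stays in the compact group $G$, and that the Hilbert--Schmidt norm on $\on{End}(V)$ is $\on{O}(V)$-bi-invariant so that controlling $\|\tilde\omega\|$ is the same as controlling $\|\tilde s'\|$; once these are in place, the substance of the argument is just the elementary remark that, after passing to the logarithmic derivative, interpolating toward $0$ in the vector space $\mathfrak{g}$ cannot increase the norm.
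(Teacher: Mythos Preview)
Your proof is correct and is essentially the paper's argument rephrased: the paper extends $s|_K$ beyond each endpoint by $g\exp(\varphi(t-t_+)X)$ with $X=s(t_+)^{-1}s'(t_+)$ and $\varphi'=\psi$ a linear cutoff from $1$ to $0$, which is precisely the closed-form solution of your ODE $\tilde s'=\tilde s\,\tilde\omega$ for the linearly interpolated logarithmic derivative $\tilde\omega$. The only difference is presentational---you integrate the ODE on all of $I$ and invoke uniqueness on $K$, while the paper writes down the explicit exponential on $I\setminus K$ and checks the $C^1$-match at the endpoints.
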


\begin{proof}
 We may extend $s|_K$ through the endpoints of $K = (t_-,t_+)$ using the exponential mapping in the direction 
 $s'(t_\pm)$.   
 More precisely, for the right endpoint $t_+$ set $g = s(t_+) \in G$ and $s'(t_+) = T_e \mu_g.X$ for $X \in \mathfrak g$
 (where $\mu_g(h) = gh$ denotes left translation on $G$), and define 
 $$
 \tilde s(t) = g  \exp(\vh(t-t_+) X), 
 $$
 where $\vh(t) = \int_0^t \ps(u)\, du$ for  
 \begin{align*}
   \ps(t) = 
   \begin{cases}
     1 & t\le 0 \\
     1- \frac{t}{\de} & 0 \le t \le \de \\
     0 & t\ge \de
   \end{cases}
 \end{align*}
 and where $\de$ denotes the distance of the right endpoints of $K$ and $J$.     
\end{proof}

Fix an open interval $K \Subset I_{t_0}$, $t_0\in K$.  By Lemma \ref{lem:C1glue}, 
we may extend each $s_{M^i_v}  (\overline c_i (t))$ to a $C^1$ map  $s_i : I_i\to G$ that coincides with  $s_{M^i_v}  (\overline c_i (t))$ on $K$ and 
is constant in the complement of $I_{t_0}$.  Let us then shrink $I_1$ and $I_2$ so that their union $I_1\cup I_2$ 
does not change but $I_1\cap I_2=K$.  Then we set 
\begin{align*}
\overline c(t) :=
s_{M^i_v}  (\overline c_i (t))^{-1}\ \overline c_i (t), \quad  \text{ if } t\in I_i, \ \ i=1,2,
\end{align*} 
which is well-defined by \eqref{gluing}.  Moreover, 
\begin{equation} \label{eq:cprime}
  \|\overline c'(t)\| \le C \max \{ \|\overline c_1' (t)\|, \|\overline c_2'  (t)\| \}, \quad t \in I_1 \cup I_2,   
\end{equation} 
for a universal constant $C>0$, where we set $\overline c_i' (t):=0$ if $t \not \in I_i$.

\subsection{\texorpdfstring{$\tC{p}{m}$}{pCm}-functions}

Later in the proof  we shall need a result on functions defined near $0 \in\R$ that become $C^m$ when 
multiplied with the monomial $t^p$. 

\begin{definition*}
  Let $p,m \in \N$ with $p \le m$.
A continuous complex valued function $f$ defined near $0 \in \R$ is called a \emph{$\tC{p}{m}$-function} 
if $t \mapsto t^p f(t)$ belongs to $C^m$.  
\end{definition*}

Let $I \subseteq \R$ be an open interval containing $0$. 
Then $f : I \to \C$ is $\tC{p}{m}$ if and only if 
it has the following properties, cf.\ \cite[4.1]{Spallek72}, \cite[Satz~3]{Reichard79}, or \cite[Theorem~4]{Reichard80}:
\begin{itemize}
\item $f \in C^{m-p}(I)$.
\item $f|_{I\setminus \{0\}} \in C^m(I\setminus \{0\})$.
\item $\lim_{t \to 0} t^k f^{(m-p+k)}(t)$ exists as a finite number for all $0 \le k \le p$. 
\end{itemize}

\begin{proposition} \label{prop:pCm}
If $g=(g_1,\ldots,g_n)$ is $\tC{p}{m}$ and $F$ is $C^m$ near $g(0)\in \C^n$, then $F \o g$ is $\tC{p}{m}$.
\end{proposition}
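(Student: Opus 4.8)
\textbf{Proof plan for Proposition~\ref{prop:pCm}.}
The plan is to use the characterization of $\tC{p}{m}$-functions recalled just above the proposition, verifying its three conditions for $F \o g$. First I would check the easy ones: on $I \setminus \{0\}$ the function $g$ is genuinely $C^m$ (multiply by $t^p$ and use $t^p \ne 0$), hence $F \o g \in C^m(I \setminus \{0\})$ by the ordinary chain rule; and $F\o g$ is continuous at $0$ since $g$ is. The substance lies in two places: showing $F \o g \in C^{m-p}(I)$ (in particular across $0$), and establishing that the limits $\lim_{t\to 0} t^k (F\o g)^{(m-p+k)}(t)$ exist and are finite for $0 \le k \le p$.

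For the derivative bounds near $0$, the key computational tool is the Fa\`a di Bruno formula: for $t \ne 0$,
\[
  (F\o g)^{(j)}(t) = \sum \frac{j!}{\prod_i (k_i!) (i!)^{k_i}}\, (\partial^\beta F)(g(t)) \prod_{i=1}^{j} \big(g^{(i)}(t)\big)^{k_i},
\]
the sum over all $(k_1,\ldots,k_j)$ with $\sum i\, k_i = j$ and $\beta = \sum k_i$. Writing $g_r = t^{-p} G_r$ with $G_r \in C^m$, each derivative $g_r^{(i)}(t)$ is a sum of terms $t^{-p-\ell} G_r^{(i-\ell)}(t)$ (up to constants), $0 \le \ell \le i$, so $|g^{(i)}(t)| \le C\, |t|^{-p-i}\cdot(\text{bounded})$ is too crude; instead I would keep track of the precise structure. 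The cleaner route: since $t^p g_r$ is $C^m$, one has $g_r \in C^{m-p}$ with $g_r^{(i)}(t) = O(|t|^{-i})$ only in the worst case, but more usefully each $g_r^{(i)}$, for $i \le m$, satisfies $t^i g_r^{(i)}(t) \to$ finite limit (this is exactly the third bullet of the characterization applied to $g_r$, after shifting indices). So in the Fa\`a di Bruno sum for $j = m-p+k$, the product $\prod (g^{(i)})^{k_i}$ multiplied by $t^{\sum i k_i} = t^{m-p+k}$ has a finite limit; since we want $t^k (F\o g)^{(m-p+k)}$, we supply $t^k$ from the prefactor and need the remaining $t^{m-p}$ to be absorbed — and indeed $m - p = \sum i k_i - k \ge \sum(i k_i) - k$; one checks the bookkeeping works because $\sum i k_i = m-p+k \ge m-p \ge \beta\cdot 1$ is not quite enough, so the honest argument is: distribute powers of $t$ across the factors so that each $g^{(i)}$ receives $t^{\min(i,\,\cdot)}$, using that $g$ being $\tC{p}{m}$ forces $g^{(i)}(t) = O(|t|^{p-i})$ won't hold, rather $g^{(i)}(t)\,t^{i}$ is bounded for $i \le m-p$ and $g^{(i)}(t)\,t^{p}$ relates to $(t^p g)^{(i)}$ for larger $i$ via the Leibniz rule.

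Concretely, I would argue via the Leibniz expansion $(t^p g)^{(i)} = \sum_{\ell} \binom{i}{\ell} \frac{p!}{(p-\ell)!} t^{p-\ell} g^{(i-\ell)}$, which being the derivative of a $C^m$ function is bounded for $i \le m$, and invert it to express $t^p g^{(i)}$ as a combination of $(t^p g)^{(j)}$, $j \le i$, plus lower-order $t^{p-\ell}g^{(i-\ell)}$ terms — an induction on $i$ then shows $t^{\min(i,p)} g^{(i)}(t)$ is bounded near $0$, and similarly that the relevant limits exist. Feeding this into Fa\`a di Bruno and matching powers of $t$ gives both that $(F\o g)^{(j)}$ for $j \le m-p$ extends continuously over $0$ (so $F\o g \in C^{m-p}(I)$, using a standard lemma: a $C^{m-p}$ function off a point whose derivatives up to order $m-p$ have limits is $C^{m-p}$) and that $t^k (F\o g)^{(m-p+k)}(t)$ has a finite limit for each $k \le p$.

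\textbf{Main obstacle.} The delicate point is the power-counting in the Fa\`a di Bruno sum: showing that every term, after multiplication by the correct power of $t$, is bounded (and has a limit) rather than blowing up. This requires the precise a priori estimates $t^{\min(i,p)} g^{(i)} = O(1)$ derived by inverting the Leibniz relation, not the naive $t^{i} g^{(i)} = O(1)$ which would be false. I expect the bulk of the written proof to be this careful bookkeeping, together with invoking the cited characterization of $\tC{p}{m}$ (from \cite{Spallek72}, \cite{Reichard79}, \cite{Reichard80}) in both directions — once to get structural information on $g$, once to conclude for $F \o g$.
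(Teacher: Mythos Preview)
Your overall architecture matches the paper's: use the three-bullet characterization of $\tC{p}{m}$ together with Fa\`a di Bruno. The first two bullets for $F\o g$ are indeed immediate (the paper dispatches $F\o g \in C^{m-p}(I)$ in one line via the ordinary chain rule, since already $g \in C^{m-p}$). The substance is the third bullet, and here your power-counting has a genuine gap.

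You propose to assign to each factor $g^{(\al_i)}$ the weight $t^{\min(\al_i,p)}$. That estimate is true but too weak to close the argument. Take $m=4$, $p=2$, $k=1$: for $j=m-p+k=3$ the single-factor Fa\`a di Bruno term is $F'(g(t))\, g^{(3)}(t)$, and you must show that $t\, g^{(3)}(t)$ has a finite limit as $t\to 0$. Your bound only gives that $t^{\min(3,2)} g^{(3)} = t^2 g^{(3)}$ is bounded, which does not suffice. (Your earlier attempt, reading the third bullet as ``$t^{i} g^{(i)}$ has a limit'', misstates the characterization and fails for the same reason.)

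The correct weight is $\be_i := \max\{\al_i - (m-p),0\}$, and it is supplied \emph{directly} by the characterization applied to $g$: for $\al_i \le m-p$ the derivative $g^{(\al_i)}$ is continuous at $0$ (first bullet), while for $\al_i = (m-p)+k'$ with $0<k'\le p$ the limit $\lim_{t\to 0} t^{k'} g^{(\al_i)}(t)$ exists (third bullet). No Leibniz inversion is needed. With this choice the paper writes, for $1\le k\le p$ and $t\ne 0$,
\[
  \frac{t^k (F\o g)^{(m-p+k)}(t)}{(m-p+k)!}
  = \sum_{\ell \ge 1}\ \sum_{\substack{\al \in \N_{>0}^\ell \\ \al_1+\cdots+\al_\ell=m-p+k}}
  \frac{t^{\,k-|\be|}}{\ell!}\, d^\ell F(g(t))\Big(\frac{t^{\be_1} g^{(\al_1)}(t)}{\al_1!},\ldots,\frac{t^{\be_\ell} g^{(\al_\ell)}(t)}{\al_\ell!}\Big),
\]
and the only combinatorial point is $|\be| \le k$, equivalently $\sum_i \min(\al_i,m-p) \ge m-p$ when $\sum_i \al_i = m-p+k$; this holds because either some $\al_j \ge m-p$, or else every $\min(\al_i,m-p)=\al_i$ and the sum equals $m-p+k$. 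Each factor on the right then has a finite limit, and so does the left-hand side.

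In short: keep your plan, replace $\min(\al_i,p)$ by $\max\{\al_i-(m-p),0\}$, drop the Leibniz-inversion detour, and the proof closes in a few lines.
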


\begin{proof}
Cf.\ \cite[Theorem~9]{Reichard80} or \cite[Proposition~3.2]{Rainerfin}. Clearly $g$ and $F \o g$ are $C^{m-p}$ near $0$ and $C^m$ off $0$. 
By Fa\`a di Bruno's formula \cite{FaadiBruno1855}, for $1 \le k \le p$ and $t\ne0$,
\begin{align*} 
    \frac{t^k (F \o g)^{(m-p+k)}(t)}{(m-p+k)!} &= \sum_{\ell\ge 1}  \sum_{\al \in A}
    \frac{t^{k-|\be|}}{\ell!}  d^\ell F(g(t)) 
    \Big( 
    \frac{t^{\be_1} g^{(\al_1)}(t)}{\al_1!},\dots,
    \frac{t^{\be_\ell} g^{(\al_\ell)}(t)}{\al_\ell!}\Big) \\
    A &:= \{\al\in \N_{>0}^\ell : \al_1+\dots+\al_\ell =m-p+k\} \\
    \be_i &:= \max\{\al_i - m + p,0\}, \quad |\be| = \be_1+\dots+\be_\ell \le k,
\end{align*}
whose limit as $t\to 0$ exists as a finite number by assumption.
\end{proof}

\subsection{End of proof}\label{endofproof}

We distinguish three kinds of points $t_0 \in I$:
\begin{description}
  \item[Case 0\phantomsection\label{Case0}] $c_1(t_0) \ne 0$, or
  \item[Case 1\phantomsection\label{Case1}] $c_1(t_0) = 0$, thus $c_1'(t_0) = 0$ by \eqref{eq:red1}, and $c_1''(t_0) \ne 0$, or
  \item[Case 2\phantomsection\label{Case2}] $c_1(t_0) = c_1'(t_0) = c_1''(t_0)=0$.
\end{description}
Near points of Case~\hyperref[Case0]{0} there are local $C^1$-lifts, by the algorithm in Subsection \ref{ssec:algorithm}.

Let us prove that we also have local $C^1$-lifts near points $t_0$ of Case~\hyperref[Case1]{1}. 
For simplicity of notation let $t_0=0$.
Then  
$c_1(t) \sim t^2$  and hence $c_i(t) = O(t^{d_i})$.
Therefore,   
\[
  \underline c(t) := \big(t^{-2} c_1(t), t^{-d_2 } c_2 (t),\ldots, t^{-d_n } c_n (t) \big) : 
  I_1 \to  \si(V) \subseteq \R^{n}, 
\]
defined on a neighborhood $I_1$ of $0$, is continuous.  
By Lemma~\ref{lem:red} the lifting problem reduces to the curve 
$c^* = (c^*_i)_{i=1}^m$,
\begin{equation} \label{eq:cstar2}
   c^*_i(t) = t^{e_i} \vh_i\big(t^{-2} c_1(t), t^{-d_2 } c_2 (t),\ldots, t^{-d_n } c_n (t)\big),
   \quad e_i = \deg \ta_i,   
\end{equation}  
in the orbit space $\ta(N_v)$ of any slice representation $G_v \acts N_v$ so that $v \in \si^{-1}(\underline c(0))$. 
Then $c^*_i$ is of class $C^{e_i}$ at $0$, by Proposition \ref{prop:pCm}, and of class $C^d$ in the complement of $0$. 
After removing fixed points of $G_v \acts N_v$, 
we may assume that the curve 
\begin{equation*}
   \underline c^*(t) := \big(t^{-e_1} c^*_1(t), t^{-e_2} c^*_2(t),\ldots, t^{-e_m} c^*_m (t)\big)
\end{equation*} 
in $\ta(N_v)$ vanishes at $t=0$, since $\underline c(0) = \si(v)$ (cf.\ \eqref{eq:vh}). 
Thus $c^*_i(t) = o(t^{e_i})$, for all $i$. 

\begin{lemma} \label{lem:Case1}
In this situation,
for any $\varepsilon>0$ there is a neighborhood $I_\varepsilon$ of $0$ in $I$ such that 
for every  $t_0\in I_\varepsilon \setminus \{0\}$ the assumptions \eqref{A.2}--\eqref{A.4} are satisfied 
for the reduced curve $c^*$ from \eqref{eq:cstar2} with $A\le \varepsilon$.  
\end{lemma}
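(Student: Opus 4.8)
The plan is to verify each of the conditions \eqref{A.2}, \eqref{A.3}, \eqref{A.4} for $c^*$ near a Case~1 point (which we place at $0$) by exploiting the precise asymptotics we already have: $c_1(t)\sim t^2$, hence there are constants $0<a\le b$ so that $a\, t^2\le c_1(t)\le b\, t^2$ on a fixed neighborhood of $0$, while $c^*_i(t)=o(t^{e_i})$ for all $i$. Writing $g_i(t):=t^{-2}c_1(t)$ and $g(t):=\underline c(t)$, note that $g$ is $C^{\,d-2}$ near $0$ (being a $\tC{2}{d}$-function composed with nothing) and $\underline c^*=\vh\circ(g-\si(v))$ is likewise $\tC{0}{d}$, i.e.\ genuinely $C^{\,d}$ off $0$ and continuous at $0$ with $\underline c^*(0)=0$; so the functions $\underline c^*_i$ and all derivatives up to order $d$ tend to $0$ as $t\to 0$ (for the derivatives of order $\le d-1$ this is continuity at $0$ of the $C^d$ extension $t^{e_i}$-times; more precisely $c^*_i$ is $\tC{e_i}{d}$ by Proposition~\ref{prop:pCm}, and $c^*_i=o(t^{e_i})$ forces the relevant limits to vanish).

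First I would fix $t_0\in I\setminus\{0\}$ small and set $\tau:=|t_0|$, so $|c^*_1(t_0)|^{1/2}\le C\,\tau$ (from $e_1=2$ and $c^*_1(t_0)=o(\tau^2)$, in particular $|c^*_1(t_0)|^{1/2}=o(\tau)$). Given $A=\ve$, the interval $J_{t_0}(\ve^{-1})$ has half-length $\ve^{-1}|c^*_1(t_0)|^{1/2}$; since $|c^*_1(t_0)|^{1/2}=o(\tau)$ we can, by shrinking the neighborhood $I_\ve$ of $0$ depending on $\ve$, guarantee that this half-length is $\le \tau/2$, hence $J_{t_0}(\ve^{-1})\subseteq I_1$ and stays inside the region where $t$ is comparable to $t_0$; that gives \eqref{A.2}. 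Next, on $J_{t_0}(\ve^{-1})$ we have $t$ comparable to $t_0$, so $c^*_1(t)/c^*_1(t_0)$ is controlled once we know $c^*_1(t)=o(t^2)$ with a uniform-in-small-neighborhood little-oh, plus a Taylor estimate $|c^*_1(t)-c^*_1(t_0)|\le \sup|(c^*_1)''|\cdot |t-t_0|\cdot(|t|+|t_0|)+|(c^*_1)'(t_0)||t-t_0|$; each factor is $O(\tau)$ or $o(\tau)$ and the length $|t-t_0|\le \ve^{-1}|c^*_1(t_0)|^{1/2}=o(\tau)$, so the ratio $|c^*_1(t)-c^*_1(t_0)|/|c^*_1(t_0)|\to 0$ as $I_\ve$ shrinks, which yields \eqref{A.3}.

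For \eqref{A.4} I would split by the sign of $e_i-k$. When $e_i-k\ge 0$: since $c^*_i$ is $C^{e_i}$ at $0$ with $c^*_i(t)=o(t^{e_i})$, Taylor's formula with the $o$-remainder gives $(c^*_i)^{(k)}(t)=o(t^{e_i-k})$ for $k\le e_i$, and for $e_i<k\le d$ we bound $(c^*_i)^{(k)}$ off $0$ using that $t^{e_i}c^*_i$ is $C^d$: differentiating $t^{e_i}c^*_i$ produces, after dividing back by $t^{e_i}$, terms $t^{-j}\,(\text{bounded})$ with $j\le k-e_i$, and $|c^*_1(t)|^{(e_i-k)/e_1}\sim |t|^{e_i-k}=|t|^{-(k-e_i)}$ majorizes these (here $e_1=2$). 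The upshot is $|(c^*_i)^{(k)}(t)|\le C\,|c^*_1(t)|^{(e_i-k)/e_1}$ with a constant that can be taken arbitrarily small (for the $e_i-k\ge 0$ part) or merely universal (for the $e_i-k<0$ part); either way, writing it as $C\,A^k|c^*_1(t)|^{(e_i-k)/e_1}$ with $A=\ve$ absorbs the discrepancy provided $\ve$ is small and the neighborhood $I_\ve$ is chosen accordingly. The main obstacle I anticipate is organizing the bookkeeping so that the \emph{single} neighborhood $I_\ve$ works simultaneously for \emph{all} $t_0\in I_\ve\setminus\{0\}$, all $i$, and all $k$: the estimates above are uniform in $t_0$ once one has a uniform little-oh, which comes from the continuity at $0$ of the finitely many $C^d$-extensions $t^{e_i}c^*_i$ and their derivatives together with $c^*_i(t)=o(t^{e_i})$; I would therefore state a uniform modulus $\om(\tau)\to 0$ at the outset and feed $\ve$ into it to pin down $I_\ve$. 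Lemma~\ref{lem:e} guarantees $e_i\le d$, so no derivative of order exceeding the available regularity is ever needed.
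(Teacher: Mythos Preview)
There is a genuine gap in your handling of \eqref{A.3} and of \eqref{A.4} for $0<k<e_i$. All of your estimates are calibrated against powers of $\tau=|t_0|$ (or $|t|$), using $c^*_1(t)=o(t^2)$ and $(c^*_i)^{(k)}(t)=o(t^{e_i-k})$. But the conditions to be verified are bounds \emph{relative to $c^*_1$}, and $c^*_1(t)=o(t^2)$ is only an upper bound: nothing prevents $c^*_1(t_0)$ from being arbitrarily smaller than $\tau^2$ (it may even vanish at points accumulating at~$0$). Concretely:
\begin{itemize}
\item In your argument for \eqref{A.3}, dividing your Taylor estimate by $|c^*_1(t_0)|$ leaves terms of the form $o(\tau)\,\ve^{-1}/|c^*_1(t_0)|^{1/2}$. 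Since $|c^*_1(t_0)|^{1/2}$ is not comparable to $\tau$ from below, this ratio need not be small. What is needed is $|(c^*_1)'(t_0)|\le (\text{small})\cdot|c^*_1(t_0)|^{1/2}$, which is exactly Glaeser's inequality with a constant coming from $\on{Lip}_{I_1}((c^*_1)')\to 0$.
\item In your argument for \eqref{A.4} with $0<k<e_i$, both $(c^*_i)^{(k)}(t)$ and the target $|c^*_1(t)|^{(e_i-k)/2}$ are $o(t^{e_i-k})$, so the first being $o(t^{e_i-k})$ says nothing about their ratio.
\end{itemize}
The paper avoids this by never comparing with powers of $t$ in these ranges. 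It checks \eqref{A.4} only at the two endpoints $k=0$ (automatic from the dominance \eqref{eq:dom}) and $k=e_i$ (where $(c^*_i)^{(e_i)}(0)=0$ and continuity give arbitrarily small~$A$), and then invokes Lemma~\ref{shortcut}: Glaeser yields \eqref{A.3} from the smallness of $\on{Lip}((c^*_1)')$, and the interpolation Lemma~\ref{taylor} on the interval $I_{t_0}(A^{-1})$ fills in the intermediate derivatives.

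For $k>e_i$ your idea is essentially correct, but two details need fixing: what is $C^d$ is $t^{\,d-e_i}c^*_i$ (i.e.\ $c^*_i$ is $\tC{d-e_i}{d}$), not $t^{e_i}c^*_i$; and you want $|c^*_1(t)|^{(e_i-k)/2}\gg |t|^{e_i-k}$ (strictly larger, since the exponent is negative and $c^*_1=o(t^2)$), not~$\sim$. The paper instead recycles the bound \eqref{derivatives2} in terms of $|c_1|^{(e_i-k)/2}$ with the constant $\hat A$ for the original curve $c$, and then uses $|c^*_1/c_1|=o(1)$; once the above slips are corrected, your route and the paper's amount to the same thing for this range of~$k$.
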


\begin{proof}
Here we have to deal with the fact that 
$c^*$ is not necessarily of class $C^e$. 
Let $I_0 = (-\delta, \delta)$ and $I_1 = (-2\delta, 2\delta)$.  Since $(c^*_1)''(0) = 0 $ and $c^*_1(t)$ is 
of class $C^2$, the constant $A_1$ of \eqref{A1A2} for $c^*$ can be chosen arbitrarily small.  
This is what we need to get \eqref{A.2}--\eqref{A.3} with arbitrarily small $A$. 
 
We have $c^*_i \in C^{e_i}$ near $0$ (and $c^*_i \in C^{d}$ off $0$) and $(c^*_i)^{(k)}(0) = 0$ for all $k \le e_i$.  
Therefore  for an arbitrary $A>0$ there is a neighborhood $I_1$ in which \eqref{A.4} holds 
for all $i$ and $k=e_i$,  
and then, by Lemma \ref{shortcut}, in a smaller neighborhood, for all $i$ and all  
$k\le e_i$.

Finally, given $A>0$ we show \eqref{A.4} for $k> e_i$ and $\delta$ sufficiently small.
Let $\hat A$ denote the constant $A$ for which \eqref{A.2}--\eqref{A.4} holds for $c$.
By \eqref{derivatives2}, for some constant $C = C(G \acts V)$, 
  \[
    |{(c^*_i)}^{(k)}(t)| \le C \hat A^k\, |c_1(t)|^{\frac{e_i-k}{2}}
    \le C \hat A^k\, \ps (t)  |c^*_1 (t) |^{\frac{e_i-k}{2}},
  \]
which gives the required result since  $\ps (t) =  | c^*_1 (t) / c_1(t) |^{\frac{k-e_i}{2}} = o(1)$ for $k>e_i$.
\end{proof}

By induction, we may conclude from Lemma \ref{lem:Case1} that there is a $C^1$-lift near $0$. 

We may now glue the local lifts, according to Subsection \ref{ssec:glue}.  
Let $J$ be a connected component of  the complement $ I'$ 
of the flat points (i.e., the points in Case~\hyperref[Case2]{2}).  
Then there exists an open cover  $\mathcal J= \{J_i\}_{i\in \Z}$  of $J$, with $C^1$-lifts $\overline c_i$ of $c|_{J_i}$, and  such that  $J_i\cap J_j \ne \emptyset$ if and only if $|i-j|\le 1$.  
By Subsection \ref{ssec:glue} we may assume that there are $C^1$-maps $ s_{i, \pm} : J_i \to G$ such that 
on $J_i\cap J_{i+1}$ 
\begin{align}\label{gluing2}
 s_{i, +} (t)  \ \overline c_i(t)  =  s_{i+1,-}(t)  \  \overline c_{i+1}(t) .
\end{align} 
Moreover, by Lemma \ref{lem:C1glue}, we may assume that there is $t_i\in J_i \setminus (J_{i-1}\cup J_{i+1})$ such that both 
$ s_{i, \pm}$ are constant, say equal $ g_{i, \pm}$, in a neighborhood $J_{t_i}$ of $t_i$.   Thus we may glue 
$g_{i,-}^{-1} s_{i, -}$ and $g_{i,+}^{-1} s_{i, + }$ into a single map  $s_{i} : J_i \to G$ that equals 
$g_{i,-}^{-1} s_{i, -}$ 
for $t\le t_i$ and  $g_{i,+}^{-1} s_{i, + }$ for $t\ge t_i$.   Then 
\begin{align}\label{gluing3}
g_{i,+} s_{i} (t)  \ \overline c_i(t)  = g_{i+1,-} s_{i+1}(t)  \  \overline c_{i+1}(t) .
\end{align}

\begin{lemma}\label{coboundarylemma}
There are $h_i\in G$ such that 
\begin{align}\label{gluing4}
h_{i} s_{i} (t)  \ \overline c_i(t)  = h_{i+1} s_{i+1}(t)  \  \overline c_{i+1}(t) .
\end{align} 
\end{lemma}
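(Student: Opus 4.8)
The plan is to interpret \eqref{gluing3} as a cocycle condition and to produce the $h_i$ by a telescoping (``coboundary'') construction, exploiting the fact that the index set $\Z$ is linearly ordered and the nerve of the cover $\mathcal J$ is a line (so $J_i \cap J_{i+1} \cap J_{i+2} = \emptyset$). Write $a_i(t) := g_{i,+} s_i(t)$ and $b_{i+1}(t) := g_{i+1,-} s_{i+1}(t)$, so that \eqref{gluing3} reads $a_i(t)\,\overline c_i(t) = b_{i+1}(t)\,\overline c_{i+1}(t)$ on $J_i \cap J_{i+1}$. Recall from Subsection~\ref{ssec:glue} that near the interior point $t_i \in J_i$, both $s_{i,\pm}$ are constant, hence $s_i$ is constant near $t_i$; in particular $a_i$ and $b_i$ are locally constant near $t_i$, and moreover on the overlap $J_i \cap J_{i+1}$ the element $b_{i+1}(t) a_i(t)^{-1}$ lies in the stabilizer $G_{\overline c_i(t)}$, which is conjugate to a fixed isotropy group.

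First I would fix an index, say $i=0$, set $h_0 := e$, and define the remaining $h_i$ recursively so that \eqref{gluing4} holds. Concretely, choose a point $\tau_i \in J_i \cap J_{i+1}$ for each $i$; on a neighborhood of $t_i$ (resp.\ $t_{i+1}$) the maps $s_i$ (resp.\ $s_{i+1}$) are constant, but at $\tau_i$ we only know \eqref{gluing3}. The key observation is that \eqref{gluing3} already gives $h_i s_i(t)\overline c_i(t) = h_{i+1}s_{i+1}(t)\overline c_{i+1}(t)$ \emph{provided} we can arrange $h_{i+1} g_{i+1,-} = h_i g_{i,+}$; so the recursion is forced: set $h_{i+1} := h_i\, g_{i,+}\, g_{i+1,-}^{-1}$ for $i \ge 0$, and $h_{i-1} := h_i\, g_{i-1,-}\, g_{i-1,+}^{-1}$ — wait, more carefully, for $i<0$ one runs the recursion backwards, $h_i := h_{i+1}\, g_{i+1,-}\, g_{i,+}^{-1}$. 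With these choices the combination $h_i s_i$ automatically satisfies $h_i g_{i,+} = h_{i+1} g_{i+1,-}$, and substituting into \eqref{gluing3} yields \eqref{gluing4} verbatim. Since the $h_i \in G$ are defined unambiguously by the recursion from the single choice $h_0 = e$ and there is no nontrivial cycle in the nerve, no consistency condition needs to be checked.

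The main obstacle — and the only place real content enters — is making sure that this purely formal recursion does not destroy the $C^1$-gluing already achieved, i.e.\ that \eqref{gluing4} is an equality of $C^1$ maps on all of $J$, consistently over triple regions. Here the point is that consecutive overlaps $J_i \cap J_{i+1}$ and $J_{i+1}\cap J_{i+2}$ are disjoint (being separated by the point $t_{i+1}$ at which $s_{i+1}$ is constant), so the relation \eqref{gluing4} on $J_i\cap J_{i+1}$ and the relation on $J_{i+1}\cap J_{i+2}$ involve $s_{i+1}$ only through its locally constant values $g_{i+1,\pm}$; thus the definition of $h_{i+1}$ extracted from the left overlap is compatible with the one extracted from the right overlap precisely because $s_{i+1}$ transitions from $g_{i+1,-}$ to $g_{i+1,+}$ across $t_{i+1}$, which is exactly the freedom built into $s_i$ via Lemma~\ref{lem:C1glue}. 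I would therefore verify: (i) that $h_{i+1} s_{i+1}$ is still $C^1$ on $J_{i+1}$ — immediate, since left translation by the fixed element $h_{i+1}\in G$ is a diffeomorphism of $G$; and (ii) that the resulting glued curve $t \mapsto h_i s_i(t)\,\overline c_i(t)$ on $J_i$ agrees on overlaps with the neighbour, which is exactly \eqref{gluing4}. Finally, defining $\overline c(t) := h_i s_i(t)\,\overline c_i(t)$ for $t \in J_i$ gives a well-defined $C^1$-lift of $c|_J$, with the derivative bound $\|\overline c'(t)\| \le C\max_i\{\|\overline c_i'(t)\|\}$ inherited from \eqref{eq:cprime} and \eqref{eq:s} (left translations being isometries up to a uniform constant, or isometries if one uses a bi-invariant metric on $G$), which is what is needed so that $\overline c'(t) \to 0$ as $t$ approaches the flat points in Case~\ref{Case2} and the lift extends $C^1$ across them.
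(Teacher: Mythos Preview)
Your approach is exactly the paper's: fix $h_0=e$ and define the remaining $h_i$ recursively so that the cocycle $g_{i+1,-}^{-1}g_{i,+}$ becomes a coboundary; the paper even remarks that this is just $\check H^1(\mathcal J;G)=0$ for the nerve of a line. However, your explicit recursion contains an algebraic slip. From \eqref{gluing3} one has $s_{i+1}\,\overline c_{i+1} = g_{i+1,-}^{-1} g_{i,+}\, s_i\,\overline c_i$, so \eqref{gluing4} requires $h_{i+1}^{-1}h_i = g_{i+1,-}^{-1}g_{i,+}$, i.e.\ $h_{i+1} = h_i\, g_{i,+}^{-1}\, g_{i+1,-}$, which is \emph{not} the same as your condition $h_{i+1} g_{i+1,-} = h_i g_{i,+}$ (equivalently $h_{i+1}=h_i\,g_{i,+}\,g_{i+1,-}^{-1}$) in a noncommutative group. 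With the corrected formula the argument goes through verbatim.

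The rest of your discussion (that left translation by a constant $h_i\in G$ preserves $C^1$ regularity and the derivative bounds, that no triple-overlap consistency is needed) is correct but lies outside the statement of the lemma itself, which only asserts the existence of the constants $h_i$; the paper handles those points in the surrounding text.
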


\begin{proof}
In view of \eqref{gluing3} it suffices to find $h_i$ such that $g_{i+1,-}^{-1}  g_{i,+} = h_{i+1}^{-1}  h_{i}$.  
So we may fix $h_0=e$ and then define them  inductively by $h_{i+1} = h_i g_{i,+}^{-1}  g_{i+1,-}$. 

(Note that the existence of such $h_i$ simply means that the cocycle $g_{i+1,-}^{-1}  g_{i,+}$ is 
a \v Cech coboundary, that  is   clear because 
  $\check H^1 (\mathcal J; G)=0$.)
\end{proof}

In this way we obtain a $C^1$-lift $\overline c$ of $c$ restricted to $I'$ with the property that $\|\overline c'(t)\|$ 
is dominated (up to a universal constant) by $A_0$ defined by \eqref{formulaforA}, 
thanks to \eqref{eq:cprime}. 
The lift $\overline c$ extends trivially to flat points $t_0$ from Case~\hyperref[Case2]{2}. 
At each such point $t_0$, $\overline c$ is differentiable with $\overline c'(t_0) = 0$. 
It remains to check that $\overline c'(t) \to 0$ as $t\to t_0$. 
This is a consequence of the following lemma, where without loss of generality $t_0=0$.

\begin{lemma}\label{flatcaselemma}
If $c_1(0) = c_1'(0) = c_1''(0)=0$,
then for any $\varepsilon>0$ there is $\delta >0$ such that for $I_0 = (-\delta, \delta)$, $I_1 = (-2\delta, 2\delta)$,  
and $A_0$ defined by \eqref{formulaforA} we have $A_0 \le \varepsilon$.  
\end{lemma}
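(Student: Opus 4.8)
The plan is to show directly that each of the three quantities entering the definition \eqref{A1A2}--\eqref{formulaforA} of $A_0 = 6\max\{A_1,A_2\}$ tends to $0$ as $\delta \to 0$, and then choose $\delta$ so small that $\max\{A_1,A_2\} \le \varepsilon/6$. Note that we may assume $d \ge 2$ (for $d=1$ we have $V = V^G = \{0\}$ by \eqref{eq:red1} and \eqref{eq:red2}), so that $c_1 \in C^2$. Since $c_1(0) = c_1'(0) = 0$, Taylor's formula gives $c_1(t) = t^2 \int_0^1 (1-s)\, c_1''(st)\, ds$; as $c_1''$ is continuous with $c_1''(0) = 0$, this shows $c_1(t) = o(t^2)$ as $t \to 0$, hence $\|c_1\|_{L^\infty(I_1)} \le \eta(\delta)\, \delta^2$ with $\eta(\delta) \to 0$, and therefore $\delta^{-1}\|c_1\|_{L^\infty(I_1)}^{1/2} \to 0$. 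On the other hand, by the mean value theorem $\on{Lip}_{I_1}(c_1') \le \sup_{I_1}|c_1''|$, which tends to $|c_1''(0)| = 0$ by continuity. Thus $A_1 \to 0$ as $\delta \to 0$.

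It remains to treat $A_2 = \max_i \{ M_i\,\|c_1\|^{(d-d_i)/2}_{L^\infty(I_1)} \}^{1/d}$ with $M_i = \on{Lip}_{I_1}(c_i^{(d-1)})$, and here I would distinguish two cases. For indices $i$ with $d_i < d$: since $c_i \in C^d$, $M_i \le \sup_{I_1}|c_i^{(d)}|$ stays bounded as $\delta \to 0$, while $\|c_1\|^{(d-d_i)/2}_{L^\infty(I_1)} \le \|c_1\|^{1/2}_{L^\infty(I_1)} \to 0$ because $c_1$ is continuous with $c_1(0) = 0$; so the corresponding term tends to $0$. For indices $i$ with $d_i = d$ the power of $\|c_1\|_{L^\infty(I_1)}$ is zero, so the term equals $M_i = \on{Lip}_{I_1}(c_i^{(d-1)}) \le \sup_{I_1}|c_i^{(d)}|$, and I then need $c_i^{(d)}(0) = 0$. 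This I would obtain by combining the dominance estimate \eqref{eq:dom}, which gives $|c_i(t)| \le C\,|c_1(t)|^{d_i/2} = C\,|c_1(t)|^{d/2} = o(t^d)$, with the Taylor expansion of the $C^d$-function $c_i$ at $0$: a polynomial of degree $\le d$ that is $o(t^d)$ vanishes identically, so $c_i^{(k)}(0) = 0$ for all $k \le d$, in particular $c_i^{(d)}(0) = 0$; by continuity $\sup_{I_1}|c_i^{(d)}| \to 0$. Hence $A_2 \to 0$ as well.

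Putting this together, given $\varepsilon > 0$ one picks $\delta > 0$ small enough that both $A_1 \le \varepsilon/6$ and $A_2 \le \varepsilon/6$ (and $I_1 = (-2\delta,2\delta) \Subset I$), which yields $A_0 \le \varepsilon$. The only step that is not purely formal --- and the natural place a gap could creep in --- is the case $d_i = d$ in the estimate for $A_2$, where the required smallness is not merely a consequence of continuity but rests on using \eqref{eq:dom} to transfer the flatness of $c_1$ at the Case~\hyperref[Case2]{2} point to flatness of $c_i$, forcing $c_i^{(d)}(0) = 0$; all the other estimates are immediate from continuity of the derivatives up to order $d$ together with the vanishing of $c_1, c_1', c_1''$ at $0$.
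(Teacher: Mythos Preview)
Your proposal is correct and is precisely the unpacking of the paper's one-line proof, which simply says the statement ``follows immediately from the formulas \eqref{formulaforA} and \eqref{A1A2}''. You supply the details the paper omits, including the only genuinely non-obvious point: for indices with $d_i=d$ the factor $\|c_1\|_{L^\infty(I_1)}^{(d-d_i)/2}$ disappears, so one must argue that $M_i=\on{Lip}_{I_1}(c_i^{(d-1)})\to 0$, and your use of \eqref{eq:dom} together with $c_1(t)=o(t^2)$ to force $c_i^{(d)}(0)=0$ is exactly the right way to do this.
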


\begin{proof}
This follows immediately from the formulas \eqref{formulaforA} and \eqref{A1A2}.  
\end{proof} 

The proof of Theorem~\ref{mainC1} is complete.

\section{Real analytic lifts} 

It was shown in \cite{AKLM00} that a real analytic curve $c \in C^\om(I,\si(V))$ admits local real analytic lifts near 
every point $t_0 \in I$, and that the local lifts can be glued to a global real analytic lift if $G \acts V$ is polar. 
We will now show that real analytic gluing is always possible.  

\begin{theorem} \label{thm:realanalytic}
  Let $\rep$ be a real finite dimensional orthogonal representation of a compact Lie group. 
  Then any $c \in C^{\om}(I,\si(V))$ admits a lift $\overline c \in C^{\om}(I,V)$.
\end{theorem}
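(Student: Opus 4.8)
The plan is to combine two ingredients: the local existence of real analytic lifts from \cite{AKLM00}, and a real analytic version of the gluing procedure developed in Section~\ref{sec:proofC1}. First I would invoke \cite{AKLM00} to get, for every $t_0 \in I$, an open interval $J_{t_0} \ni t_0$ and a real analytic lift $\overline c_{t_0} : J_{t_0} \to V$ of $c|_{J_{t_0}}$. By a standard compactness/exhaustion argument on $I$, I can extract a locally finite cover $\{J_i\}_{i \in \Z}$ of $I$ by such intervals, ordered so that $J_i \cap J_j \ne \emptyset$ if and only if $|i-j| \le 1$, with real analytic lifts $\overline c_i$ on $J_i$. On each nonempty overlap $J_i \cap J_{i+1}$ the two lifts $\overline c_i$ and $\overline c_{i+1}$ have the same image-orbit for every $t$, so there is a (continuous, hence real analytic where the orbit type is locally constant) map $t \mapsto g_i(t) \in G$ with $\overline c_{i+1}(t) = g_i(t)\,\overline c_i(t)$; the delicate point is to choose $g_i$ real analytically across the whole overlap, including points lying in lower strata.

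The heart of the argument is the analogue of Subsections~\ref{ssec:change}--\ref{ssec:glue} in the real analytic category. Using the change-of-slice diffeomorphisms $h_{M_v}$ and $s_{M_v}$ from Subsection~\ref{ssec:change} --- which are real analytic because $\Phi_v$ in \eqref{Phi_v} is built from a real analytic local section $\varphi_H$ of $G \to G/H$ --- I would, near any fixed $t_0 \in J_i \cap J_{i+1}$, transport both $\overline c_i$ and $\overline c_{i+1}$ into a common normal slice $S_v$ at $v = \overline c_i(t_0) = \overline c_{i+1}(t_0)$. Since a slice meets each orbit of type $(G_v)$ in a single point, the two transported lifts agree on the subset of $J_{t_0}$ mapping into the principal-type part of $S_v$; by the identity theorem for real analytic functions they agree on all of $J_{t_0}$, yielding the relation $s_{M^1_v}(\overline c_i)^{-1}\overline c_i = s_{M^2_v}(\overline c_{i+1})^{-1}\overline c_{i+1}$ exactly as in \eqref{gluing}. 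This shows the transition element $g_i$ is locally real analytic near each point of $J_i \cap J_{i+1}$, hence real analytic on the whole connected overlap.

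Once the real analytic cocycle $(g_i)$ is in hand, I would realize it as a coboundary: the gluing is governed by the \v{C}ech cohomology $\check H^1(\{J_i\}; \underline G)$ of the cover with coefficients in the sheaf of real analytic $G$-valued functions, but since the cover is an interval cover of a one-dimensional contractible base (an interval) by intervals with the ``nerve is a path'' property, one can solve the cocycle relation by an explicit inductive telescoping, exactly as in Lemma~\ref{coboundarylemma}: set $\overline c := g_i(t)\overline c_i(t)$ on a suitably shrunk $J_i$ after replacing $\overline c_i$ by $h_i \overline c_i$ for constants $h_i \in G$ chosen so that the transition maps become trivial on the overlaps. The resulting $\overline c : I \to V$ is real analytic on each $J_i$ and the definitions agree on overlaps, so it is a global real analytic lift of $c$.

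The main obstacle is the real analytic dependence of the transition maps $g_i$ at points where the orbit passes through a lower stratum: there $G$ does not act freely, so $g_i(t)$ is only determined up to $G_{\overline c_i(t)}$ and one must show a consistent real analytic choice exists through such points. This is precisely what the slice machinery of Subsection~\ref{ssec:change} buys us --- the maps $s_{M_v}$ are globally real analytic on a full neighborhood of $v$ in the affine slice, with no stratification issues --- and what lets the identity theorem do the rest. A secondary, more routine point is ensuring the cover $\{J_i\}$ can be taken with the path-nerve property and locally finite on all of $I$ (including a neighborhood of $\pm\infty$ if $I$ is unbounded), which is handled by the usual exhaustion of $I$ by compact subintervals.
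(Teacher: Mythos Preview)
Your overall strategy matches the paper's: obtain local real analytic lifts from \cite{AKLM00}, produce real analytic transition maps on the overlaps, and trivialize the resulting cocycle. The gap is in the cocycle-trivialization step. You propose to solve it ``exactly as in Lemma~\ref{coboundarylemma},'' with \emph{constants} $h_i \in G$. But Lemma~\ref{coboundarylemma} applies only because the preceding Lemma~\ref{lem:C1glue} first replaces each $s_{i,\pm}$ by a $C^1$-map that is \emph{constant} outside a small neighborhood, so that the cocycle in \eqref{gluing3} already consists of constants $g_{i,\pm} \in G$. This cutoff trick is unavailable in $C^\omega$: by the identity theorem a real analytic map $J_i \to G$ that is constant on an open subset is constant on the whole interval. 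Your transition maps $g_i : J_i \cap J_{i+1} \to G$ are genuinely non-constant in general, and the equation $g_i = h_{i+1}^{-1} h_i$ with constants $h_i \in G$ would force each $g_i$ to be constant. Trivializing the cocycle by \emph{real analytic} $h_i : J_i \to G$ instead amounts to extending each $g_i$ real analytically from the overlap to all of $J_{i+1}$, which is not automatic and is precisely the non-trivial content here.

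The paper closes this gap by invoking a deep theorem of Tognoli \cite{Tognoli69} (suggested by Cartan \cite{Cartan56}, \cite{Cartan60}): $\check H^1(I, G^a) = 0$ for the sheaf $G^a$ of real analytic $G$-valued maps on $I$. Granting this, after passing to a refinement one finds real analytic (not constant) $h_i : I_i \to G$ with $s_{ij} = h_j^{-1} h_i$ on $I_i \cap I_j$, and the global lift is $\overline c(t) = h_i(t)\,\overline c_i(t)$. For the existence of real analytic transition maps $s_{ij}$ on the overlaps the paper simply cites Lemma~3.8 of \cite{AKLM00}; your slice-based argument for that part is reasonable, but the real difficulty lies entirely in the cohomological step.
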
 

\begin{proof}
The local lifts can be glued thanks to the fact that  
 \begin{align}\label{cohomology}
 \check H^1 (I, G^a) = 0,
 \end{align}
 where $G^a$ denotes the sheaf of real analytic maps $I\supseteq U \to G$.  
 This is a deep result, suggested by Cartan in \cite{Cartan56}, \cite{Cartan60}, and proven by Tognoli \cite{Tognoli69}.   
 
 Indeed, let $\mathcal I = \{I_i\}$ be a locally finite cover of $I$ with real analytic lifts 
 $\overline c_i$ of $c|_{I_i}$ (which exist by the result of \cite{AKLM00}).  Then, by Lemma 3.8 of  \cite{AKLM00}, we may assume that 
 if $I_i\cap I_j \ne \emptyset$ 
 then there is real analytic $s_{ij} : I_i\cap I_j  \to G$ such that on $I_i\cap I_j$ 
 $$
s_{ij} \overline c_i =\overline c_j .
 $$
 By \eqref{cohomology}, after replacing $\mathcal I$ by its refinement if necessary,  there are real analytic 
 $h_i: I_i\to G$ such that $s_{ij}= h_j^{-1} h_i$ on $I_i\cap I_j$ and then 
 $$
 \overline c (t) = h_i (t) \overline c_i (t)  , \   \text { if }  \  t\in I_i,
 $$
defines a global lift.  
\end{proof}

\def\cprime{$'$}
\providecommand{\bysame}{\leavevmode\hbox to3em{\hrulefill}\thinspace}
\providecommand{\MR}{\relax\ifhmode\unskip\space\fi MR }
\providecommand{\MRhref}[2]{%
  \href{http://www.ams.org/mathscinet-getitem?mr=#1}{#2}
}
\providecommand{\href}[2]{#2}


\end{document}